\newcommand{\tor}{\operatorname{tor}}
\newcommand{\Acyc}{\operatorname{Acyc}}
\newcommand{\sgn}{\operatorname{sgn}}
\newcommand{\tripprox}{\approx_{\mathrm{loc}}}
\DeclareMathOperator{\FS}{\mathsf{FS}}
\newenvironment{enumerate*}%
  {\begin{enumerate}[(I)]%
    \setlength{\itemsep}{10pt}%
    \setlength{\parskip}{0pt}}%
  {\end{enumerate}}
\newtheorem{theorem}{Theorem}[section]
\newtheorem{proposition}[theorem]{Proposition}
\newtheorem{corollary}[theorem]{Corollary}
\newtheorem{conjecture}[theorem]{Conjecture}
\newtheorem{lemma}[theorem]{Lemma}
\theoremstyle{definition}
\newtheorem{remark}[theorem]{Remark}
\newtheorem{example}[theorem]{Example}
\newcommand{\dfn}[1]{\textcolor{blue}{\emph{#1}}}
\begin{document}

\title[]{Friends and strangers walking on graphs}
\subjclass[2010]{}

\author[]{Colin Defant}
\address[]{Department of Mathematics, Princeton University, Princeton, NJ 08540, USA}
\email{cdefant@princeton.edu}
\author[]{Noah Kravitz}
\address[]{Department of Mathematics, Princeton University, Princeton, NJ 08540, USA}
\email{nkravitz@princeton.edu}

\begin{abstract} 
Given graphs $X$ and $Y$ with vertex sets $V(X)$ and $V(Y)$ of the same cardinality, we define a graph $\FS(X,Y)$ whose vertex set consists of all bijections $\sigma\colon V(X)\to V(Y)$, where two bijections $\sigma$ and $\sigma'$ are adjacent if they agree everywhere except for two adjacent vertices $a,b \in V(X)$ such that $\sigma(a)$ and $\sigma(b)$ are adjacent in $Y$. This setup, which has a natural interpretation in terms of friends and strangers walking on graphs, provides a common generalization of Cayley graphs of symmetric groups generated by transpositions, the famous $15$-puzzle, generalizations of the $15$-puzzle as studied by Wilson, and work of Stanley related to flag $h$-vectors. We derive several general results about the graphs $\FS(X,Y)$ before focusing our attention on some specific choices of $X$. When $X$ is a path graph, we show that the connected components of $\FS(X,Y)$ correspond to the acyclic orientations of the complement of $Y$. When $X$ is a cycle, we obtain a full description of the connected components of $\FS(X,Y)$ in terms of toric acyclic orientations of the complement of $Y$. We then derive various necessary and/or sufficient conditions on the graphs $X$ and $Y$ that guarantee the connectedness of $\FS(X,Y)$.  Finally, we raise several promising further questions.
\end{abstract}

\maketitle

\section{Introduction}\label{Sec:Intro}

Let $X$ be a simple graph with $n$ vertices. Imagine that $n$ different people, any two of whom are either friends or strangers, are standing so that one person is at each vertex of $X$.  At each point in time, two friends standing at adjacent vertices of $X$ may swap places by simultaneously crossing the edge that connects their respective vertices; two strangers may not swap places in this way. Our goal is to understand which configurations of people can be reached from other configurations when we allow the people to swap places repeatedly in this manner. The resolution of this problem certainly depends on the graph $X$, as well as on the information about which people are friends with each other; this further information can be encoded in a graph $Y$ whose vertex set is the set of people and where edges indicate friendship.

To formalize and refine this problem, we define the \dfn{friends-and-strangers graph} $\FS(X,Y)$ whose vertex set is the set of bijections $\sigma\colon V(X)\to V(Y)$. Two bijections $\sigma,\sigma'\colon V(X)\to V(Y)$ are adjacent in $\FS(X,Y)$ if and only if we can find an edge $\{a,b\}$ in $X$ such that:
\begin{itemize}
\item $\{\sigma(a),\sigma(b)\}$ is an edge in $Y$;
\item $\sigma(a)=\sigma'(b)$ and $\sigma(b)=\sigma'(a)$;
\item $\sigma(c)=\sigma'(c)$ for all $c\in V(X)\setminus\{a,b\}$.
\end{itemize}
When this is the case, we refer to the operation that transforms $\sigma$ into $\sigma'$ as an \dfn{$(X,Y)$-friendly swap across $\{a,b\}$}. Performing an $(X,Y)$-friendly swap corresponds to allowing two friends to swap places in the graph $X$. Notice that the isomorphism type of $\FS(X,Y)$ depends only on the isomorphism types of $X$ and $Y$. Since we will usually be concerned only with the graph-theoretic structure of $\FS(X,Y)$ (such as the number and sizes of connected components), we will often specify the graphs $X$ and $Y$ only up to isomorphism. 

It is sometimes convenient to assume that $V(X)$ and $V(Y)$ are both the set $[n]:=\{1,\ldots,n\}$. In this case, the vertices of $\FS(X,Y)$ are the elements of the symmetric group $\mathfrak S_n$, which consists of all permutations of the numbers $1,\ldots,n$. For $i,j\in[n]$, let $(i\,j)$ be the transposition in $\mathfrak S_n$ that swaps the numbers $i$ and $j$. If $\sigma\in\mathfrak S_n$ is such that $\{i,j\}$ is an edge in $X$ and $\{\sigma(i),\sigma(j)\}$ is an edge in $Y$, then we can perform an $(X,Y)$-friendly swap across $\{i,j\}$ to change $\sigma$ into the permutation $\sigma\circ(i\,j)$.  If we write the permutation $\sigma$ in one-line notation as $\sigma=\sigma(1)\cdots \sigma(n)$, then an $(X,Y)$-friendly swap transposes two entries of the permutation such that the positions of the entries are adjacent in $X$ and the entries themselves are adjacent in $Y$.

\begin{example}
If $\displaystyle X=Y=\begin{array}{l}\includegraphics[height=.5cm]{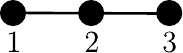}\end{array},\text{ then }\FS(X,Y)=\begin{array}{l}\includegraphics[height=1.3cm]{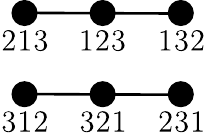}\end{array}$. 
\end{example}

\begin{example}\label{Exam2}
If $X=\begin{array}{l}\includegraphics[height=1.5cm]{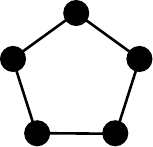}\end{array}\quad\text{and}\quad Y=\begin{array}{l}\includegraphics[height=1.7cm]{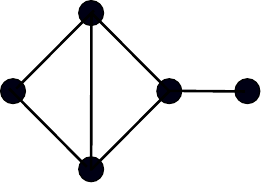}\end{array}$, then $\FS(X,Y)$ is the graph shown (without vertex labels) in Figure~\ref{Fig5}. Notice that $\FS(X,Y)$ has $5$ pairwise isomorphic connected components, each of which exhibits a highly symmetric structure; our results in Sections~\ref{Sec:General} and~\ref{Sec:Cycles} will explain both of these properties.
\end{example}

\begin{figure}[ht]
\[\begin{array}{l}\includegraphics[height=5cm]{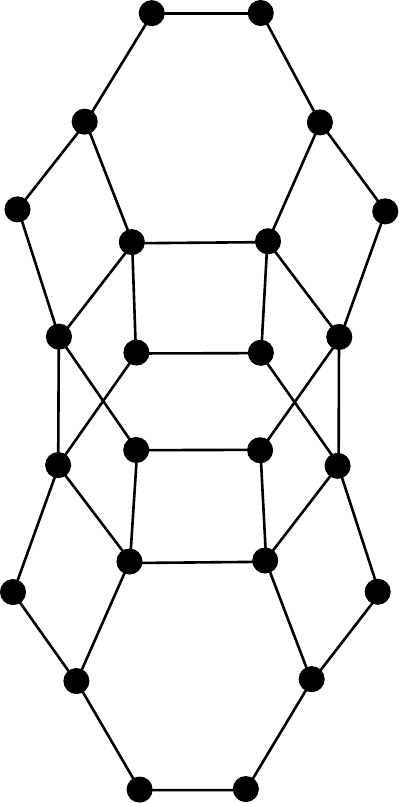}\end{array} \begin{array}{l}\includegraphics[height=5cm]{FriendsPIC16}\end{array} \begin{array}{l}\includegraphics[height=5cm]{FriendsPIC16}\end{array} \begin{array}{l}\includegraphics[height=5cm]{FriendsPIC16}\end{array} \begin{array}{l}\includegraphics[height=5cm]{FriendsPIC16}\end{array}\] 
\caption{The graph $\FS(X,Y)$ from Example~\ref{Exam2}.}
\label{Fig5}
\end{figure}

In this paper, we concern ourselves mainly with the problem of understanding the structure of $\FS(X,Y)$ for various graphs $X$ and $Y$. This is a very general setup, of which some previously-considered problems are special cases. One motivation for phrasing the problem in terms of permutations is to encompass previous work on multiplication by restricted transpositions. To give some examples, we let $K_n$ denote the complete graph on the vertex set $[n]$ and let $\mathsf{Path}_n$ be the path graph on the vertex set $[n]$ with edges $\{i,i+1\}$ for all $i\in[n-1]$. The graph $\FS(K_n,K_n)$ is the Cayley graph of $\mathfrak S_n$ generated by all transpositions. The graph $\FS(K_n,\mathsf{Path}_n)$ is the weak Bruhat graph of $\mathfrak S_n$, which is also the $1$-skeleton of the standard $(n-1)$-dimensional permutohedron. More generally, if $Y$ is connected, then $\FS(K_n,Y)$ is the Cayley graph of $\mathfrak S_n$ generated by the transpositions corresponding to the edges of $Y$. Letting $\overline Y$ denote the complement of $Y$, we find that $\FS(\mathsf{Path}_n,\overline Y)$ is the same as the update graph introduced by Barrett and Reidys in their work \cite{SDS} on sequential dynamical systems. 
Stanley's article \cite{stanley} focuses on the connected components of $\FS(\mathsf{Path}_n,\mathsf{Path}_n)$. 

Another special case of our setup has previously been phrased in terms of sliding tiles on graphs. Analyzing the famous 15-puzzle is equivalent to analyzing the connected components of $\FS(\mathsf{Grid}_{4\times 4}, \mathsf{Star}_{16})$, where $\mathsf{Grid}_{4\times 4}$ is a $4\times 4$ grid graph and $\mathsf{Star}_n$ denotes the star graph with vertex set $[n]$ and edges $\{i,n\}$ for all $i\in[n-1]$. The purpose of Wilson's article \cite{wilson} is to generalize the 15-puzzle by computing the number of connected components of $\FS(X, \mathsf{Star}_n)$ for arbitrary graphs $X$.  Conway, Elkies, and Martin \cite{conway} investigated a ``projective'' variant of the 15-puzzle problem and its connections to the Mathieu groups.  More recently, Yang \cite{yang} considered a similar problem in this spirit. See also \cite{biniaz, gill1}.

A simple but useful observation is that for any $n$-vertex simple graphs $X$ and $Y$, the graph $\FS(X,Y)$ is isomorphic to $\FS(Y,X)$. Indeed, the map $\sigma\mapsto \sigma^{-1}$ provides the necessary isomorphism. For example, Section~\ref{Sec:Paths} concerns the graphs of the form $\FS(\mathsf{Path}_n,Y)$; by taking inverses of permutations, one could easily rewrite the results in that section in terms of graphs of the form $\FS(X,\mathsf{Path}_n)$. 

\medskip

In Section~\ref{Sec:General}, we prove several general properties of the graphs $\FS(X,Y)$. For example, we show how the decomposition of $X$ into connected components translates into a decomposition of $\FS(X,Y)$ into a disjoint union of Cartesian products. We prove that $\FS(X,Y)$ is disconnected whenever $X$ and $Y$ both have cut vertices, and we provide a lower bound for the number of connected components. We also show that $\FS(X,Y)$ is disconnected whenever $X$ and $Y$ are both bipartite.  

In Section~\ref{Sec:Paths}, we investigate $\FS(\mathsf{Path}_n,Y)$ for arbitrary graphs $Y$. This generalizes Stanley's work in \cite{stanley}. We will see that the connected components correspond bijectively to the acyclic orientations of the complement of $Y$ and that the vertices in each connected component are the linear extensions of a poset naturally associated to the corresponding acyclic orientation. This result is closely related to a known result concerning Coxeter elements in Coxeter systems. We will also see that each connected component of $\FS(\mathsf{Path}_n,Y)$ is isomorphic to the Hasse diagram of a convex subset of the weak order on $\mathfrak S_n$. 

The characterization of the connected components of $\FS(\mathsf{Path}_n,Y)$ is also roughly equivalent to a simple yet useful fact about linear extensions of posets, which we state below as Proposition~\ref{PropToggles}. Let $P=([n],\leq_P)$ be a poset with underlying set $[n]$. A \dfn{linear extension} of $P$ is a permutation $\sigma\in\mathfrak S_n$ such that $\sigma^{-1}(a)\leq \sigma^{-1}(b)$ whenever $a\leq_P b$. Let $\mathcal L(P)$ denote the set of linear extensions of $P$. For each $i\in[n-1]$, define the \dfn{toggle operator} $t_i\colon \mathcal L(P)\to\mathcal L(P)$ by \[t_i(\sigma)=\begin{cases} \sigma\circ(i\,\, i+1), & \mbox{if } \sigma(i)\not\leq_P \sigma(i+1); \\ \sigma, & \mbox{if } \sigma(i)\leq_P \sigma(i+1). \end{cases}\] It is important to note that for each $i\in[n]$ and $\sigma\in\mathcal L(P)$, the permutation $t_i(\sigma)$ is in $\mathcal L(P)$. Furthermore, the map $t_i\colon \mathcal L(P)\to\mathcal L(P)$ is an involution. 

\begin{proposition}\label{PropToggles}
Let $P=([n],\leq_P)$ be a poset. If $\sigma,\sigma'\in\mathcal L(P)$, then there exists a sequence $i_1,\ldots,i_\ell$ of elements of $[n-1]$ such that $\sigma'=(t_{i_\ell}\circ\cdots\circ t_{i_1})(\sigma)$. 
\end{proposition}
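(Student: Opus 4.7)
The plan is to proceed by induction on $n$. The base case $n=1$ is vacuous, since $\mathcal L(P) = \{\mathrm{id}\}$.

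For the inductive step, set $x := \sigma'(1)$. Because $\sigma'$ is a linear extension, $x$ is a minimal element of $P$: if $y \leq_P x$, the linear-extension property forces $\sigma'^{-1}(y) \leq \sigma'^{-1}(x) = 1$, so $y = x$. Let $k$ be the position of $x$ in $\sigma$, so $\sigma(k) = x$. My first step is to ``bubble'' $x$ leftward to position $1$ by applying the toggles $t_{k-1}, t_{k-2}, \ldots, t_1$ in that order. The key point is that whenever $x$ occupies some position $i+1$, the element $y$ at position $i$ cannot satisfy $y \leq_P x$, because minimality of $x$ would force $y = x$, contradicting that $y$ sits at a position different from that of $x$. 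Consequently each such toggle genuinely performs the adjacent transposition, and if we set $\tau := (t_1 \circ t_2 \circ \cdots \circ t_{k-1})(\sigma)$, then $\tau(1) = x = \sigma'(1)$.

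Having matched the first entries, the second step is to invoke the inductive hypothesis on the induced subposet $P' := ([n]\setminus\{x\},\, \leq_P)$, viewed (after an innocuous relabeling) as a poset on $[n-1]$. The restrictions of $\tau$ and $\sigma'$ to positions $\{2,\ldots,n\}$ are both linear extensions of $P'$, so by induction they are connected by a sequence of $P'$-toggles. Each such $P'$-toggle $t'_j$ lifts to the $P$-toggle $t_{j+1}$ acting on all of $[n]$; these lifts act only on positions $\geq 2$, so they leave $x$ fixed at position $1$, and they continue to satisfy the toggle condition because $\leq_{P'}$ is simply the restriction of $\leq_P$ to $[n]\setminus\{x\}$. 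Concatenating the bubbling toggles with these lifts yields the desired sequence converting $\sigma$ into $\sigma'$.

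The only substantive obstacle is the bubbling stage: once one observes that the minimality of $x$ is precisely what guarantees each toggle $t_{k-1}, \ldots, t_1$ is active rather than a no-op, the remainder of the argument is routine bookkeeping.
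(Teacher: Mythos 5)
Your proof is correct: bubbling the minimal element $x=\sigma'(1)$ to the front via toggles (which always act, by minimality of $x$) and then applying the inductive hypothesis to the subposet on $[n]\setminus\{x\}$ is a complete argument, and the lifting of $P'$-toggles to $P$-toggles is handled properly. The paper does not actually write out a proof of Proposition~\ref{PropToggles}---it only remarks that it "is not too difficult to prove by induction on $n$"---and your argument is precisely the induction the paper has in mind, so there is nothing further to compare.
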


Proposition~\ref{PropToggles} is not too difficult to prove by induction on $n$, but it is of fundamental importance in many different places in combinatorics. It is usually used to show that a definition involving a choice of a linear extension is well-defined in the sense that it does not depend on the choice of the linear extension. For example, it has been employed in order to understand Coxeter elements in Coxeter systems \cite[Theorem 1.15]{Develin}, Jucys-Murphy elements and Gelfand-Tsetlin bases \cite[Proposition 8.18]{Meliot}, $\oplus$-diagrams and \reflectbox{L}-diagrams \cite[Proposition 2.6]{Lam}, and the hook-length formula \cite[Lemma 3]{Pak}.

In Section~\ref{Sec:Cycles}, we completely describe the connected components of $\FS(\mathsf{Cycle}_n,Y)$, where $\mathsf{Cycle}_n$ is the cycle graph with vertex set $[n]$ and edge set  $\{\{i,i+1\}:1\leq i\leq n-1\}\cup\{\{n,1\}\}$.
This description is much more involved than (yet very similar in flavor to) the description of the connected components of $\FS(\mathsf{Path}_n,Y)$; it makes use of toric acyclic orientations, which have appeared in many contexts and were formalized in \cite{Develin}. The connected components of $\FS(\mathsf{Cycle}_n,Y)$ can be understood via a new equivalence relation on acyclic orientations of the complement of $Y$, which we call \emph{double-flip equivalence}. This new notion could be of independent interest; it turns out that our analysis of the graphs $\FS(\mathsf{Cycle}_n,Y)$ not only requires an understanding of double-flip equivalence classes but also reciprocally yields interesting structural information about the double-flip equivalence classes. We will see that each toric acyclic orientation of the complement of $Y$ corresponds to $\nu$ isomorphic connected components of $\FS(\mathsf{Cycle}_n,Y)$, where $\nu$ is the greatest common divisor of the sizes of the connected components of the complement of $Y$. One corollary is that $\FS(\mathsf{Cycle}_n,Y)$ is connected if and only if the complement of $Y$ is a forest whose trees have (not necessarily pairwise) coprime sizes. Since Proposition~\ref{PropToggles} is equivalent to our characterization of the connected components of $\FS(\mathsf{Path}_n,Y)$, one can view our characterization of the connected components of $\FS(\mathsf{Cycle}_n,Y)$ as providing a toric analogue of Proposition~\ref{PropToggles}. 

Section~\ref{Sec:Hereditary} concerns a sufficient condition for $\FS(X,Y)$ to be connected. We phrase this result in terms of hereditary classes and Hamiltonian paths. More precisely, we define a \emph{prolongation} of a graph $X$ to be a graph $\widetilde X$ with a Hamiltonian path that itself contains a Hamiltonian path of a subgraph $X^\#$ of $\widetilde X$ such that $X^\#$ is isomorphic to $X$. We will prove that if $X$ has a Hamiltonian path and $\FS(X,Y)$ is connected whenever $Y$ belongs to a hereditary class $\mathcal H$, then $\FS(\widetilde X,\widetilde Y)$ is connected whenever $\widetilde X$ is a prolongation of $X$ and $\widetilde Y$ is in $\mathcal H$. As a corollary, we produce several infinite classes of pairs $(X,Y)$ such that $\FS(X,Y)$ is connected.

In Section~\ref{Sec:Lollipops}, we turn our attention to necessary conditions for $\FS(X,Y)$ to be connected.  We will prove that if $X$ has an induced subgraph isomorphic to $\mathsf{Path}_d$ such that the internal vertices of the path all have degree $2$ in $X$ and $\FS(X,Y)$ is connected, then the minimum degree of $Y$ is at least $d+1$.  By combining this with the main result from Section~\ref{Sec:Hereditary}, we will see that $\FS(\mathsf{Lollipop}_{n-3,3},Y)$ is connected if and only if $Y$ has minimum degree at least $n-2$, where $\mathsf{Lollipop}_{n-3,3}$ is a lollipop graph (which is obtained by identifying an endpoint of a path on $n-2$ vertices with a vertex in the triangle $K_3$). We will also see that $\FS(\mathsf{Lollipop}_{n-3,3},Y)$ is connected if and only if $\FS(\mathsf{D}_n,Y)$ is connected, where $\mathsf D_n$ is the Dynkin diagram of type $D_n$ (which is obtained from $\mathsf{Path}_{n-1}$ by adding the vertex $n$ and the edge $\{n-2,n\}$).  

We end the paper with numerous suggestions for future work in Section~\ref{Sec:Conclusion}. In particular, we define a generalization of friends-and-strangers graphs in which symmetric groups are replaced by arbitrary groups; this construction is given by the intersection of right and left Cayley graphs of the group. Let us also remark that it is very natural to ask probabilistic and extremal questions concerning friends-and-strangers graphs; along with Noga Alon, we address such questions in the article \cite{Typical}.

\subsection{Notation and terminology}\label{Subsec:Notation}
Throughout this article, we assume all graphs are simple. Let $V(G)$ and $E(G)$ denote the vertex set and edge set of a graph $G$. Some specific graphs with vertex set $[n]$ that will play a large role for us are:
\begin{itemize}
\item the \dfn{complete graph} $K_n$, which has edge set $E(K_n)=\{\{i,j\}:1\leq i<j\leq n\}$;
\item the \dfn{star graph} $\mathsf{Star}_n$, which has edge set $E(\mathsf{Star}_n)=\{\{i,n\}:i\in[n-1]\}$;
\item the \dfn{path graph} $\mathsf{Path}_n$, which has edge set $E(\mathsf{Path}_n)=\{\{i,i+1\}:i\in[n-1]\}$;
\item the \dfn{cycle graph} $\mathsf{Cycle}_n$, which has edge set $E(\mathsf{Cycle}_n)=\{\{i,i+1\}:i\in[n-1]\}\cup\{\{n,1\}\}$.
\end{itemize}

The \dfn{disjoint union} of two graphs $G_1,G_2$, denoted $G_1\oplus G_2$, is the graph whose vertex set is the disjoint union $V(G_1)\sqcup V(G_2)$ and whose edge set is the disjoint union $E(G_1)\sqcup E(G_2)$. This definition readily extends to the disjoint union of a family of graphs $G_i$ for $i$ in an index set $I$; we denote the resulting disjoint union by $\bigoplus_{i\in I}G_i$. The \dfn{Cartesian product} of the graphs $G_1, \ldots, G_r$, written $G_1 \square \cdots \square \,G_r$, has vertex set given by the set-theoretic Cartesian product $V(G_1) \times \cdots\times V(G_r)$, where the vertices $(a_1, \ldots, a_r)$ and $(b_1, \ldots, b_r)$ are adjacent if and only if there is an index $i\in[r]$ such that $\{a_i,b_i\} \in E(G_i)$ and $a_j=b_j$ for all $j\in[r]\setminus\{i\}$.

Some additional terminology concerning a graph $G$ is as follows: 
\begin{itemize}
\item The \dfn{complement} of $G$, denoted $\overline G$, is the graph with vertex set $V(\overline G)=V(G)$ such that for all $a,b\in V(G)$ with $a\neq b$, we have $\{a,b\}\in E(\overline G)$ if and only if $\{a,b\}\not\in E(G)$. 
\item An \dfn{isomorphism} from $G$ to a graph $G'$ is a bijection $\varphi\colon V(G)\to V(G')$ such that $\{a,b\}\in E(G)$ if and only if $\{\varphi(a),\varphi(b)\}\in E(G')$. If such an isomorphism exists, we say $G$ and $G'$ are \dfn{isomorphic}, denoted $G\cong G'$. An \dfn{automorphism} of $G$ is an isomorphism from $G$ to itself.   
\item We say a graph $H$ is a \dfn{subgraph} of $G$ if $V(H)\subseteq V(G)$ and $E(H)\subseteq E(G)$; we say the subgraph $H$ is \dfn{induced} if $E(H)=\{\{a,b\}\in E(G):a,b\in V(H)\}$. Given a subset $V_0$ of $V(G)$, let $G\vert_{V_0}$ denote the induced subgraph of $G$ with vertex set $V_0$. 
\item We say $G$ is \dfn{bipartite} if there exists a partition of $V(G)$ into two nonempty sets $A$ and $B$ such that every edge in $E(G)$ has one endpoint in $A$ and one endpoint in $B$; in this case, the pair $\{A,B\}$ is called a \dfn{bipartition} of $G$. 
\item We say $G$ is \dfn{connected} if for all $a,b\in V(G)$, there is a path in $G$ connecting $a$ to $b$; a \dfn{connected component} of $G$ is a maximal connected subgraph of $G$. The \dfn{size} of a connected component $H$ of $G$ is $|V(H)|$. Notice that if $H_1,\ldots,H_r$ are the connected components of $G$, then $G=\bigoplus_{i=1}^r H_i$. 
\item A \dfn{cut vertex} of $G$ is a vertex $v\in V(G)$ such that the graph $G\vert_{V(G)\setminus\{v\}}$ obtained by deleting $v$ has more connected components than $G$. We say $G$ is \dfn{separable} if it is disconnected or has a cut vertex. We say $G$ is \dfn{biconnected} if it has at least $2$ vertices and is not separable. 
\item If $G$ is a graph on $n$ vertices, then a \dfn{Hamiltonian path} in $G$ is a subgraph of $G$ isomorphic to $\mathsf{Path}_n$.
\end{itemize}

\section{General Properties of the Graphs $\FS(X,Y)$}\label{Sec:General} 

To initiate the investigation of the graphs $\FS(X,Y)$, we list some general properties that hold when $X$ and $Y$ are taken from broad classes of graphs. Some of these properties are nicely illustrated by Wilson's results for the graphs of the form $\FS(\mathsf{Star}_n,Y)$, so we will recall those results in this section as well. Several of the general results we prove here will be useful when we consider more specific families of graphs in subsequent sections. 

\begin{proposition}\label{Prop1}
Let $X,\widetilde{X},Y,\widetilde{Y}$ be graphs on $n$ vertices. If $X$ is isomorphic to a subgraph of $\widetilde{X}$ and $Y$ is isomorphic to a subgraph of $\widetilde{Y}$, then $\FS(X,Y)$ is isomorphic to a subgraph of $\FS(\widetilde{X},\widetilde{Y})$. 
\end{proposition}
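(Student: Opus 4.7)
The plan is to construct an explicit injective graph homomorphism from $\FS(X,Y)$ into $\FS(\widetilde{X},\widetilde{Y})$ whose image is the desired subgraph. Since $X$ has $n$ vertices and is isomorphic to a subgraph of $\widetilde{X}$, which also has $n$ vertices, that subgraph must be spanning; hence the isomorphism amounts to a bijection $\varphi \colon V(X) \to V(\widetilde{X})$ with the one-sided property that $\{a,b\} \in E(X) \Rightarrow \{\varphi(a),\varphi(b)\} \in E(\widetilde{X})$. Fix such a $\varphi$, and analogously fix a bijection $\psi \colon V(Y) \to V(\widetilde{Y})$ carrying edges of $Y$ to edges of $\widetilde{Y}$.

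Next I would define the map $\Phi \colon V(\FS(X,Y)) \to V(\FS(\widetilde{X},\widetilde{Y}))$ by $\Phi(\sigma) = \psi \circ \sigma \circ \varphi^{-1}$. Since $\varphi$ and $\psi$ are bijections, $\Phi(\sigma)$ is a bijection from $V(\widetilde{X})$ to $V(\widetilde{Y})$, and $\Phi$ itself is injective (in fact bijective on vertex sets, though injectivity is all that is needed). To check that $\Phi$ carries edges to edges, suppose $\sigma,\sigma' \in V(\FS(X,Y))$ are adjacent via an $(X,Y)$-friendly swap across $\{a,b\} \in E(X)$, so that $\{\sigma(a),\sigma(b)\} \in E(Y)$ and $\sigma$ and $\sigma'$ agree off $\{a,b\}$. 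Then $\{\varphi(a),\varphi(b)\} \in E(\widetilde{X})$ and $\{\psi(\sigma(a)),\psi(\sigma(b))\} = \{\Phi(\sigma)(\varphi(a)),\Phi(\sigma)(\varphi(b))\} \in E(\widetilde{Y})$, while $\Phi(\sigma)$ and $\Phi(\sigma')$ agree off $\{\varphi(a),\varphi(b)\}$ and are swapped there. Thus $\Phi(\sigma)$ and $\Phi(\sigma')$ are adjacent in $\FS(\widetilde{X},\widetilde{Y})$ via an $(\widetilde{X},\widetilde{Y})$-friendly swap across $\{\varphi(a),\varphi(b)\}$.

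Finally, let $H$ be the subgraph of $\FS(\widetilde{X},\widetilde{Y})$ with vertex set $\Phi(V(\FS(X,Y)))$ and edge set $\{\{\Phi(\sigma),\Phi(\sigma')\} : \{\sigma,\sigma'\} \in E(\FS(X,Y))\}$. The previous paragraph shows that this edge set is indeed a subset of $E(\FS(\widetilde{X},\widetilde{Y}))$, so $H$ is a genuine subgraph, and $\Phi$ restricts to an isomorphism $\FS(X,Y) \to H$ by construction.

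There is no serious obstacle here; the argument is essentially a bookkeeping exercise in composing bijections. The only point requiring a bit of care is that one should \emph{not} claim the image is an induced subgraph of $\FS(\widetilde{X},\widetilde{Y})$: the larger graph may well have additional friendly swaps among the permutations in $\Phi(V(\FS(X,Y)))$ coming from edges present in $\widetilde{X}$ or $\widetilde{Y}$ but not in $X$ or $Y$. The definition of subgraph in the paper's notation section permits this, so the claim as stated goes through.
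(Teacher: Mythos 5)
Your proof is correct and is essentially the paper's argument: the paper simply identifies all four vertex sets with $[n]$ (so that $X$ and $Y$ are literal subgraphs of $\widetilde{X}$ and $\widetilde{Y}$) and then observes, exactly as you do, that every $(X,Y)$-friendly swap is also an $(\widetilde{X},\widetilde{Y})$-friendly swap; your explicit map $\sigma\mapsto\psi\circ\sigma\circ\varphi^{-1}$ just makes that identification concrete. Your closing remark that the image need not be an induced subgraph is a correct and sensible point of care, though the paper does not need to address it.
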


Taking $\widetilde{X}=\widetilde{Y}=K_n$ in this proposition shows that $\FS(X,Y)$ is isomorphic to a subgraph of $\FS(K_n,K_n)$, the Cayley graph of $\mathfrak S_n$ generated by the set of all transpositions in $\mathfrak S_n$.  

\begin{proof}
We may assume that $V(X)=V(\widetilde{X})=V(Y)=V(\widetilde{Y})=[n]$ and that $X$ and $Y$ are subgraphs of $\widetilde{X}$ and $\widetilde{Y}$, respectively. Let $\{\sigma,\sigma'\}$ be an edge in $\FS(X,Y)$. This means there exists an edge $\{a,b\}$ in $X$ such that $\{\sigma(a),\sigma(b)\}$ is an edge in $Y$, $\sigma(a)=\sigma'(b)$, $\sigma(b)=\sigma'(a)$, and $\sigma(c)=\sigma'(c)$ for all $c\in [n]\setminus\{a,b\}$. Since $\{a,b\}$ is also an edge in $\widetilde{X}$ and $\{\sigma(a),\sigma(b)\}$ is also an edge in $\widetilde{Y}$, it follows that $\{\sigma,\sigma'\}$ is also an edge in $\FS(\widetilde{X},\widetilde{Y})$.  
\end{proof}

For $\sigma\in\mathfrak S_n$, we let $\sgn(\sigma)$ denote the \dfn{sign} of $\sigma$, which is $1$ if $\sigma$ is an even permutation (i.e., a product of an even number of transpositions) and $-1$ if $\sigma$ is an odd permutation (i.e., a product of an odd number of transpositions). 

\begin{proposition}\label{Prop2}
If $X$ and $Y$ are graphs on $n$ vertices, then $\FS(X,Y)$ is bipartite. 
\end{proposition}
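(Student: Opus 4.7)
The plan is to exhibit the standard bipartition by parity of permutations. Identify $V(X)=V(Y)=[n]$ so that the vertices of $\FS(X,Y)$ are elements of $\mathfrak S_n$, and partition $\mathfrak S_n$ into the set $A$ of even permutations and the set $B$ of odd permutations. For $n\geq 2$ both $A$ and $B$ are nonempty, which is necessary for the bipartition condition in the paper.

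The key observation is that the edges of $\FS(X,Y)$ correspond to compositions with transpositions. Indeed, as recorded in the introduction, if $\{\sigma,\sigma'\}$ is an edge in $\FS(X,Y)$ arising from an $(X,Y)$-friendly swap across an edge $\{a,b\}\in E(X)$, then $\sigma'=\sigma\circ(a\,b)$. Since every transposition is an odd permutation, I would use the multiplicativity of the sign, $\sgn(\sigma\circ(a\,b))=\sgn(\sigma)\sgn((a\,b))=-\sgn(\sigma)$, to conclude that $\sgn(\sigma)\neq \sgn(\sigma')$. Thus every edge of $\FS(X,Y)$ has exactly one endpoint in $A$ and one endpoint in $B$, so $\{A,B\}$ is a bipartition.

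There is essentially no obstacle here; the argument is a one-line consequence of the edge definition and the multiplicativity of $\sgn$. The only subtlety worth acknowledging is the degenerate case $n=1$, where $\FS(X,Y)$ consists of a single vertex with no edges and one of $A,B$ is empty; this case can either be ruled out by the implicit assumption $n\geq 2$ or handled separately by noting vacuously that the graph contains no odd cycle.
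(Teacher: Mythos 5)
Your proof is correct and is essentially the paper's own argument: the paper also takes the bipartition $\{A_n,\mathfrak S_n\setminus A_n\}$ by sign, relying on the fact that each $(X,Y)$-friendly swap composes with a transposition and hence flips parity. Your remark about the degenerate case $n=1$ is a minor point the paper glosses over, but otherwise the two proofs coincide.
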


\begin{proof}
We may assume $V(X)=V(Y)=[n]$. Let $A_n=\{\sigma\in\mathfrak S_n:\sgn(\sigma)=1\}$ denote the alternating group on $n$ letters. The pair $\{A_n,\mathfrak S_n\setminus A_n\}$ is a bipartition of $\FS(X,Y)$.  
\end{proof}

\begin{proposition}\label{Prop3}
Let $X$ and $Y$ be graphs with $V(X)=V(Y)=[n]$. If $\varphi\colon [n]\to [n]$ is an automorphism of $X$, then the map $\varphi^*\colon \mathfrak S_n\to\mathfrak S_n$ given by $\varphi^*(\sigma)=\sigma\circ\varphi$ is an automorphism of $\FS(X,Y)$.  
\end{proposition}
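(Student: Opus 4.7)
The plan is to verify that $\varphi^*$ is a bijection on $\mathfrak S_n$ and then show that it preserves the edge relation of $\FS(X,Y)$ in both directions. The bijectivity is immediate: since $\varphi \in \mathfrak S_n$, the map $\sigma \mapsto \sigma\circ\varphi$ is right multiplication by $\varphi$ in $\mathfrak S_n$, whose inverse is right multiplication by $\varphi^{-1}$. Since the automorphism group of $X$ is closed under inversion, $\varphi^{-1}$ is also an automorphism of $X$, so once we establish that $\varphi^*$ preserves adjacency whenever $\varphi$ is an $X$-automorphism, the same fact applied to $\varphi^{-1}$ yields that $\varphi^*$ reflects adjacency as well.

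The core of the argument is checking adjacency preservation. Suppose $\{\sigma,\sigma'\}$ is an edge of $\FS(X,Y)$, witnessed by an edge $\{a,b\}\in E(X)$ with $\{\sigma(a),\sigma(b)\}\in E(Y)$, $\sigma(a)=\sigma'(b)$, $\sigma(b)=\sigma'(a)$, and $\sigma(c)=\sigma'(c)$ for $c\notin\{a,b\}$. Set $a'=\varphi^{-1}(a)$ and $b'=\varphi^{-1}(b)$. I would then verify three things in order: first, that $\{a',b'\}\in E(X)$, which follows because $\varphi^{-1}$ is an automorphism of $X$ mapping $\{a,b\}$ to $\{a',b'\}$; second, that $\{(\sigma\circ\varphi)(a'),(\sigma\circ\varphi)(b')\}=\{\sigma(a),\sigma(b)\}\in E(Y)$; and third, that $(\sigma\circ\varphi)$ and $(\sigma'\circ\varphi)$ differ only by transposing the values at $a'$ and $b'$, which follows by plugging $\varphi(c')$ into the pointwise agreement of $\sigma$ and $\sigma'$ off of $\{a,b\}$. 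Together these show that $\{\varphi^*(\sigma),\varphi^*(\sigma')\}$ is an edge of $\FS(X,Y)$ via the swap across $\{a',b'\}$.

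There is no real obstacle here; the only thing to be careful about is bookkeeping with the direction of composition, namely noting that right multiplication by $\varphi$ acts as a relabeling of the domain $V(X)$, so that an $(X,Y)$-friendly swap across $\{a,b\}$ for $\sigma$ corresponds to an $(X,Y)$-friendly swap across $\{\varphi^{-1}(a),\varphi^{-1}(b)\}$ for $\sigma\circ\varphi$. It is precisely this relabeling of the $V(X)$-coordinate that demands $\varphi$ be an automorphism of $X$ (as opposed to just of $Y$ or of $[n]$). One sentence at the end can note that conjugating by $\varphi^{-1}$ gives the reverse implication, completing the proof that $\varphi^*$ is a graph automorphism of $\FS(X,Y)$.
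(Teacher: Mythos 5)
Your proposal is correct and follows essentially the same argument as the paper: both boil down to the observation that an $(X,Y)$-friendly swap across $\{a,b\}$ for $\sigma$ becomes a swap across $\{\varphi^{-1}(a),\varphi^{-1}(b)\}$ for $\sigma\circ\varphi$, using that $\varphi^{-1}$ is an automorphism of $X$. The only difference is presentational: the paper runs the verification as a single chain of equivalences, while you prove one direction and obtain the converse by applying the same fact to $\varphi^{-1}$ via $(\varphi^*)^{-1}=(\varphi^{-1})^*$.
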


\begin{proof}
Let $\sigma,\sigma'\in\mathfrak S_n$. The pair $\{\sigma,\sigma'\}$ is an edge in $\FS(X,Y)$ if and only if there exist $i,j\in[n]$ such that $\sigma'=\sigma\circ (i\, j)$, $\{i,j\}\in E(X)$, and $\{\sigma(i),\sigma(j)\}\in E(Y)$. The condition $\sigma'=\sigma\circ (i\, j)$ is equivalent to the condition $\varphi^*(\sigma')=\varphi^*(\sigma)\circ(\varphi^{-1}(i)\,\,\,\varphi^{-1}(j))$. Because $\varphi^{-1}$ is an automorphism of $X$, the condition $\{i,j\}\in E(X)$ is equivalent to the condition that $\{\varphi^{-1}(i),\varphi^{-1}(j)\}\in E(X)$. We have $\{\sigma(i),\sigma(j)\}\in E(Y)$ if and only if $\{\varphi^*(\sigma)(\varphi^{-1}(i)),\varphi^*(\sigma)(\varphi^{-1}(j))\}\in E(Y)$ since these two pairs are actually equal. Thus, $\{\sigma,\sigma'\}\in E(\FS(X,Y))$ if and only if $\{\varphi^*(\sigma),\varphi^*(\sigma')\}\in E(\FS(X,Y))$. 
\end{proof}

The following simple proposition shows that in order to understand the connected components of graphs of the form $\FS(X,Y)$, it suffices to understand what happens when $X$ and $Y$ are connected.  Recall from Section~\ref{Subsec:Notation} the definition of the graph Cartesian product $G_1\square\cdots\square\,G_r$. 

\begin{proposition}\label{Prop4}
Let $X$ and $Y$ be graphs on $n$ vertices, and let $X_1, \ldots, X_r$ be the connected components of $X$. For $i\in[r]$, let $n_i=|V(X_i)|$.  Let $\mathcal{OP}_{n_1,\ldots,n_r}(Y)$ denote the collection of ordered set partitions $(V_1,\ldots,V_r)$ of $V(Y)$ such that $|V_i|=n_i$ for all $i\in[r]$. Then \[\FS(X,Y)\cong\bigoplus_{(V_1,\ldots,V_r)\in\mathcal {OP}_{n_1,\ldots,n_r}(Y)}\left(\FS(X_1,Y\vert_{V_1})\,\square \cdots \square \FS(X_r, Y\vert_{V_r})\right).\] 
\end{proposition}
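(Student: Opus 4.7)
The plan is to exhibit the claimed decomposition by partitioning the vertices of $\FS(X,Y)$ according to how each bijection distributes the elements of $V(Y)$ among the connected components of $X$, and then checking that edges of $\FS(X,Y)$ respect this partition and restrict correctly to Cartesian-product edges within each piece.

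First I would define a map $\Phi$ from $V(\FS(X,Y))$ to $\mathcal{OP}_{n_1,\ldots,n_r}(Y)$ by sending a bijection $\sigma\colon V(X)\to V(Y)$ to the ordered set partition $(\sigma(V(X_1)),\ldots,\sigma(V(X_r)))$. This is well defined because the $V(X_i)$ partition $V(X)$ and have the prescribed cardinalities. For each fixed $(V_1,\ldots,V_r)\in\mathcal{OP}_{n_1,\ldots,n_r}(Y)$, the preimage $\Phi^{-1}(V_1,\ldots,V_r)$ consists exactly of those $\sigma$ with $\sigma(V(X_i))=V_i$ for all $i$, and such $\sigma$ are in canonical bijection with tuples $(\sigma_1,\ldots,\sigma_r)$ where $\sigma_i\colon V(X_i)\to V_i$ is a bijection, via restriction $\sigma\mapsto(\sigma|_{V(X_1)},\ldots,\sigma|_{V(X_r)})$. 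This matches the vertex set of $\FS(X_1,Y|_{V_1})\,\square\cdots\square\,\FS(X_r,Y|_{V_r})$.

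Next I would analyze edges. Suppose $\{\sigma,\sigma'\}$ is an edge in $\FS(X,Y)$, coming from an edge $\{a,b\}\in E(X)$ with $\{\sigma(a),\sigma(b)\}\in E(Y)$. Because $X_1,\ldots,X_r$ are the connected components of $X$, the edge $\{a,b\}$ lies entirely in a unique $X_i$, so $a,b\in V(X_i)$. Since $\sigma$ and $\sigma'$ agree outside $\{a,b\}$, we have $\sigma(V(X_j))=\sigma'(V(X_j))$ for every $j\in[r]$; in particular $\Phi(\sigma)=\Phi(\sigma')$, so no edge of $\FS(X,Y)$ crosses between fibers of $\Phi$. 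Within the fiber $\Phi^{-1}(V_1,\ldots,V_r)$, the swap changes only $\sigma_i$ to $\sigma_i\circ(a\,b)$ and leaves $\sigma_j$ fixed for $j\neq i$; moreover $\sigma(a),\sigma(b)\in V_i$, so $\{\sigma(a),\sigma(b)\}\in E(Y)$ is equivalent to $\{\sigma_i(a),\sigma_i(b)\}\in E(Y|_{V_i})$. Hence the edge corresponds precisely to an edge of $\FS(X_i,Y|_{V_i})$ in the $i$-th coordinate of the Cartesian product, with the other coordinates fixed. Running this argument in reverse shows that every such Cartesian-product edge arises from an edge of $\FS(X,Y)$.

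Combining these observations, the bijection $\sigma\mapsto(\sigma|_{V(X_1)},\ldots,\sigma|_{V(X_r)})$ gives the required graph isomorphism from $\FS(X,Y)$ onto the disjoint union over $\mathcal{OP}_{n_1,\ldots,n_r}(Y)$ of the Cartesian products. I do not anticipate any serious obstacle: the whole argument rests on the single observation that an edge of $X$ lives in exactly one connected component of $X$, so a friendly swap can neither mix the components nor alter which subset of $V(Y)$ is assigned to which $V(X_i)$. The only minor care needed is to verify both directions of the edge correspondence so that the Cartesian-product structure (and not some larger or smaller graph) appears in each fiber.
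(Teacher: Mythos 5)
Your proposal is correct and follows essentially the same route as the paper's proof: partition the bijections by the ordered set partition $\bigl(\sigma(V(X_1)),\ldots,\sigma(V(X_r))\bigr)$, observe that friendly swaps preserve this partition since each edge of $X$ lies in a single component, and then check that the restriction map $\sigma\mapsto(\sigma|_{V(X_1)},\ldots,\sigma|_{V(X_r)})$ carries edges within each fiber bijectively onto Cartesian-product edges. Your explicit remark that $\{\sigma(a),\sigma(b)\}\in E(Y)$ with $\sigma(a),\sigma(b)\in V_i$ is equivalent to adjacency in $Y|_{V_i}$ is a nice touch, but the argument is the same one the paper gives.
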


\begin{proof}
For each bijection $\sigma\colon X\to Y$ and index $i\in[r]$, let $V_i^\sigma=\sigma(X_i)$. Then the ordered set partiton $\rho_\sigma=(V_1^\sigma,\ldots,V_r^\sigma)$ is in $\mathcal{OP}_{n_1,\ldots,n_r}(Y)$. For each $\rho\in\mathcal{OP}_{n_1,\ldots,n_r}(Y)$, let $G_\rho$ be the induced subgraph of $\FS(X,Y)$ whose vertex set consists of the bijections $\sigma\colon X\to Y$ such that $\rho_\sigma=\rho$. If $\sigma'$ is obtained from $\sigma$ by performing an $(X,Y)$-friendly swap (meaning $\{\sigma,\sigma'\}\in\FS(X,Y)$), then we must have $\rho_\sigma=\rho_{\sigma'}$. It follows that every connected component of $\FS(X,Y)$ is contained in one of the induced subgraphs $G_\rho$, so $\displaystyle \FS(X,Y)\cong\bigoplus_{\rho\in\mathcal {OP}_{n_1,\ldots,n_r}(Y)}G_\rho$.

Now fix $\rho=(V_1,\ldots,V_r)\in\mathcal{OP}_{n_1,\ldots,n_r}(Y)$. For $\sigma\in V(G_\rho)$, let $\psi(\sigma)=(\tau_1,\ldots,\tau_r)$, where $\tau_k=\sigma\vert_{X_k}\colon X_k\to Y\vert_{V_k}$. The map $\psi$ is a bijection from $V(G_\rho)$ to $V(\FS(X_1,Y\vert_{V_1}))\times\cdots\times V(\FS(X_r,Y\vert_{V_r}))$. We want to show that $\psi$ is an isomorphism from $G_\rho$ to $\FS(X_1,Y\vert_{V_1})\,\square \cdots \square\FS(X_r, Y\vert_{V_r})$. To do this, choose some edge $\{\sigma,\sigma'\}$ of $G_\rho$. We know that $\sigma'$ is obtained from $\sigma$ by performing an $(X,Y)$-friendly swap across $\{i,j\}$ for some edge $\{i,j\}\in E(X)$. Let $X_m$ be the connected component of $X$ containing the edge $\{i,j\}$. Then $\psi(\sigma')$ is obtained from $\psi(\sigma)$ by performing an $(X_m,Y_m)$-friendly swap across $\{i,j\}$ to the $m$-th coordinate of $\psi(\sigma)$ and leaving all other coordinates of $\psi(\sigma)$ unchanged. This means that $\{\psi(\sigma),\psi(\sigma')\}$ is an edge in $\FS(X_1,Y\vert_{V_1})\, \square \cdots \square \FS(X_r, Y\vert_{V_r})$. This argument is reversible, so $\{\psi(\sigma),\psi(\sigma')\}$ is an edge in $\FS(X_1,Y\vert_{V_1})\,\square \cdots \square \FS(X_r, Y\vert_{V_r})$ if and only if $\{\sigma,\sigma'\}$ in an edge in $G_\rho$.
\end{proof}

Let us now state Wilson's main result about the connected components of $\FS(\mathsf{Star}_n,Y)$. Note that Wilson considers only the case in which $Y$ is biconnected (i.e., $Y$ is connected and does not have a cut vertex). He also excludes cycle graphs, although these graphs are easy to handle separately. Indeed, it is not difficult to show that the graph $\FS(\mathsf{Star}_n,\mathsf{Cycle}_n)$ has $(n-2)!$ connected components, each of which contains exactly $n(n-1)$ vertices.  The connected components of $\FS(\mathsf{Star}_n,\mathsf{Cycle}_n)$ correspond to cyclic orderings of $[n-1]$: indeed, by repeatedly swapping the vertex labeled $n$ with its clockwise neighbor, one can obtain any other labeling of $\mathsf{Cycle}_n$ that witnesses the same cyclic ordering of $[n-1]$. There are $(n-2)!$ cyclic orderings of $[n-1]$; once one is chosen, a vertex $\sigma$ in the corresponding connected component is uniquely determined by specifying $\sigma(n)$ and $\sigma(n-1)$. One other specific graph that Wilson must consider separately is the graph \[\theta_0=\begin{array}{l}\includegraphics[height=1.55cm]{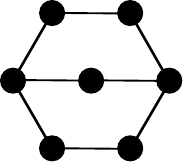}\end{array}.\]

\begin{theorem}[\!\!\cite{wilson}]\label{ThmWilson}
Let $Y$ be a biconnected graph on $n\geq 3$ vertices that is not isomorphic to $\theta_0$ or $\mathsf{Cycle}_n$. If $Y$ is not bipartite, then $\FS(\mathsf{Star}_n,Y)$ is connected. If $Y$ is bipartite, then $\FS(\mathsf{Star}_n,Y)$ has exactly $2$ connected components, each of size $n!/2$. The graph $\FS(\mathsf{Star}_7,\theta_0)$ has exactly $6$ connected components.   
\end{theorem}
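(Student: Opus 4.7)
The plan is to recast $\FS(\mathsf{Star}_n, Y)$ as the state graph of a classical sliding puzzle on $Y$. Because $\mathsf{Star}_n$ has only the edges $\{i,n\}$, each edge of $\FS(\mathsf{Star}_n, Y)$ corresponds to swapping $\sigma(n)$ with some $\sigma(i)$ for which $\{\sigma(n), \sigma(i)\} \in E(Y)$. Viewing $\sigma(n)$ as a ``hole'' moving on $Y$ and the remaining $\sigma(i)$'s as tokens seated at the other vertices, edges of $\FS(\mathsf{Star}_n, Y)$ are exactly the legal sliding-puzzle moves on $Y$. Fix a vertex $v_0 \in V(Y)$ and let $G_{v_0} \leq \mathfrak S_{n-1}$ be the subgroup of permutations of the $n-1$ tokens that arise from hole-walks returning to $v_0$. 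Since $Y$ is biconnected and hence connected, the hole can be routed to any vertex, so $\FS(\mathsf{Star}_n, Y)$ has exactly $(n-1)!/|G_{v_0}|$ connected components, each of size $n \cdot |G_{v_0}|$.

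Next I would establish an upper bound on $G_{v_0}$ when $Y$ is bipartite: any closed walk in a bipartite graph has even length, and an even product of transpositions is an even permutation, so $G_{v_0} \subseteq A_{n-1}$, which yields at least two components. On the other hand, when $Y$ is not bipartite, traversing an odd cycle with the hole produces an odd permutation of the tokens, so $G_{v_0} \not\subseteq A_{n-1}$; combined with the containment $G_{v_0} \supseteq A_{n-1}$ established below, this forces $G_{v_0} = \mathfrak S_{n-1}$ and hence connectedness.

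The crux of the argument is showing $G_{v_0} \supseteq A_{n-1}$ whenever $Y$ is biconnected and $Y \not\cong \mathsf{Cycle}_n, \theta_0$. I would prove this in two stages. First, exhibit a single 3-cycle in $G_{v_0}$: because $Y$ is biconnected and not a cycle, $Y$ contains a theta subgraph, i.e., a pair of vertices joined by three internally disjoint paths. Routing the hole around the two cycles formed by pairs of these paths in a carefully chosen order produces a walk whose net effect on the tokens is a 3-cycle, except in a small number of degenerate configurations whose analysis pinpoints $\theta_0$ as the unique exception. Second, since the hole can be carried along any path in $Y$, conjugation by such paths turns any single 3-cycle into a 3-cycle on any prescribed triple of tokens; as 3-cycles generate $A_{n-1}$, we obtain $G_{v_0} \supseteq A_{n-1}$.

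The main obstacle is this 3-cycle lemma: it demands a careful case analysis of the possible theta subgraphs, identifying precisely which configurations fail to yield a 3-cycle and verifying that every such failure forces $Y \cong \theta_0$ (or $Y \cong \mathsf{Cycle}_n$, which has no theta subgraph at all). The exceptional case $Y \cong \theta_0$ would then be handled by direct enumeration, producing exactly 6 components in $\FS(\mathsf{Star}_7, \theta_0)$. Once these pieces are in place, combining $G_{v_0} \supseteq A_{n-1}$ with the bipartite/non-bipartite dichotomy of the previous paragraph immediately yields the stated counts and sizes of connected components.
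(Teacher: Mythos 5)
A preliminary remark: the paper does not prove Theorem~\ref{ThmWilson} at all --- it is quoted from Wilson's article \cite{wilson} --- so there is no internal proof to compare against; your outline is essentially a sketch of Wilson's original sliding-puzzle argument. The framing is right and several pieces are sound: the identification of $\FS(\mathsf{Star}_n,Y)$ with the token-sliding puzzle on $Y$ (equivalently $\FS(Y,\mathsf{Star}_n)$ via $\sigma\mapsto\sigma^{-1}$), the count of $(n-1)!/|G_{v_0}|$ components each of size $n\,|G_{v_0}|$ for connected $Y$, the parity obstruction $G_{v_0}\subseteq A_{n-1}$ when $Y$ is bipartite, and the production of an odd element of $G_{v_0}$ from an odd cycle when $Y$ is not bipartite.

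However, the two steps at the crux are genuinely incomplete, and one of them is circular as stated. (1) The passage from a single $3$-cycle to $A_{n-1}$ does not follow from ``the hole can be carried along any path'': conjugating your $3$-cycle by $g\in G_{v_0}$ produces a $3$-cycle only on the $g$-image of its support, so to reach an arbitrary triple you must already know that $G_{v_0}$ acts with strong transitivity properties on the tokens --- which is part of what is being proved. Wilson's proof supplies exactly this missing ingredient, in effect establishing enough transitivity/primitivity of the puzzle group and then invoking the classical fact that a primitive permutation group containing a $3$-cycle contains the alternating group (or, equivalently, running an induction along an ear decomposition); you cannot dispense with this. (2) The $3$-cycle lemma cannot be purely local to one theta subgraph: a hole-walk confined to a theta subgraph $H\subsetneq Y$ permutes only the tokens currently on $H$, and if the theta subgraphs you inspect are exceptional (e.g.\ $Y$ properly contains $\theta_0$), the local analysis yields no $3$-cycle even though $Y\not\cong\theta_0$; ruling this out requires a global argument using the rest of $Y$. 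So the assertion ``every failure forces $Y\cong\theta_0$'' is precisely the hard content of Wilson's theorem and is asserted rather than proved. Until these two points are filled in, the proposal is a correct plan in outline but not a proof.
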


We now show that when both $X$ and $Y$ are separable, the resulting $\FS(X,Y)$ has a very large number of components; it is for this reason that Wilson~\cite{wilson} restricted his investigation of $\FS(\mathsf{Star}_n,Y)$ to the case where $Y$ is biconnected.  By Proposition~\ref{Prop4}, it suffices to consider the case where $X$ and $Y$ are connected but separable.

\begin{proposition}\label{prop5}
Let $X$ and $Y$ be connected graphs, each on $n \geq 3$ vertices. Suppose $x_0 \in X$ and $y_0 \in Y$ are cut vertices such that the connected components of $X\vert_{V(X)\setminus\{x_0\}}$ are $X_1, \ldots, X_r$ and the connected components of $Y\vert_{V(Y)\setminus\{y_0\}}$ are $Y_1, \ldots, Y_s$. Let $\mathcal M=\mathcal{M}(X,Y,x_0,y_0)$ denote the set of $r \times s$ matrices with nonnegative integer entries in which the $i$-th row sums to $|V(X_i)|$ and the $j$-th column sums to $|V(Y_j)|$.  Then the number of connected components of $\FS(X,Y)$ is at least $|\mathcal{M}|$.
\end{proposition}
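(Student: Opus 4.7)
The plan is to exhibit a function $\widetilde{M}$ from the vertex set of $\FS(X,Y)$ to $\mathcal{M}$ that is constant on each connected component of $\FS(X,Y)$ and is surjective. Any such function immediately gives the lower bound of $|\mathcal{M}|$ on the number of connected components.

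To define $\widetilde{M}$, given a bijection $\sigma\colon V(X)\to V(Y)$ I first construct an auxiliary bijection $\sigma^*\colon V(X)\setminus\{x_0\}\to V(Y)\setminus\{y_0\}$. If $\sigma(x_0)=y_0$, then $\sigma^*:=\sigma|_{V(X)\setminus\{x_0\}}$. Otherwise, letting $b^*=\sigma^{-1}(y_0)$, I set $\sigma^*(b^*):=\sigma(x_0)$ and $\sigma^*(v):=\sigma(v)$ for all other $v\in V(X)\setminus\{x_0\}$. Now let $\widetilde{M}(\sigma)$ be the $r\times s$ matrix whose $(i,j)$-entry is $|\{v\in V(X_i):\sigma^*(v)\in V(Y_j)\}|$. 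Row $i$ sums to $|V(X_i)|$ because $\sigma^*(V(X_i))\subseteq V(Y)\setminus\{y_0\}=\bigsqcup_j V(Y_j)$; an analogous count shows column $j$ sums to $|V(Y_j)|$, so $\widetilde{M}(\sigma)\in\mathcal{M}$.

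The heart of the argument is verifying that $\widetilde{M}$ is invariant under $(X,Y)$-friendly swaps. Suppose $\sigma$ and $\sigma'$ differ by a swap across $\{a,b\}\in E(X)$. I would use two basic consequences of the cut-vertex hypotheses: any edge of $X$ not incident to $x_0$ lies inside some $V(X_i)$, and any edge of $Y$ not incident to $y_0$ lies inside some $V(Y_j)$. Then I would split into cases according to whether $x_0\in\{a,b\}$ and whether $y_0\in\{\sigma(a),\sigma(b)\}$. In every case, a direct check shows that either $\sigma^*=\sigma'^*$, or $\sigma'^*$ is obtained from $\sigma^*$ by interchanging the images at a pair of vertices $v_1,v_2$ which both lie in the same $V(X_i)$ and whose $\sigma^*$-images both lie in the same $V(Y_j)$. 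Either outcome leaves $\widetilde{M}$ unchanged. This case analysis is the main technical obstacle, although no individual case is hard.

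For surjectivity, given $M\in\mathcal{M}$ I would partition $V(X_i)$ into pieces $U_{i1},\ldots,U_{is}$ with $|U_{ij}|=M_{ij}$ (using the row-sum condition) and partition $V(Y_j)$ into pieces $W_{1j},\ldots,W_{rj}$ with $|W_{ij}|=M_{ij}$ (using the column-sum condition). Setting $\sigma(x_0)=y_0$ and choosing any bijection $U_{ij}\to W_{ij}$ for each pair $(i,j)$ yields a bijection $\sigma$ with $\widetilde{M}(\sigma)=M$. Combining surjectivity with invariance gives at least $|\mathcal{M}|$ connected components, as desired.
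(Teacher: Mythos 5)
Your construction is essentially the paper's proof in different packaging: your matrix $\widetilde M(\sigma)$, built from the auxiliary bijection $\sigma^*$, agrees with the paper's incidence matrix $M(\sigma)$ on the set $\Sigma_0=\{\sigma:\sigma(x_0)=y_0\}$, and for $\sigma\notin\Sigma_0$ it encodes exactly the paper's stronger inductive invariant in which $x_0$ is lumped with the component of $X\vert_{V(X)\setminus\{x_0\}}$ containing $\sigma^{-1}(y_0)$; the surjectivity argument is the same as well. The only point needing repair is that the key claim in your case analysis is stated too strongly: it is not true that, whenever $(\sigma')^*\neq\sigma^*$, the two vertices whose images are interchanged lie in the same $V(X_i)$ \emph{and} their images lie in the same $V(Y_j)$. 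For example, if the swap is across $\{x_0,b\}$ with $y_0\notin\{\sigma(x_0),\sigma(b)\}$, then $(\sigma')^*$ differs from $\sigma^*$ by exchanging the images of $b$ and $b^*=\sigma^{-1}(y_0)$, and these two vertices may lie in different components $X_i$ (though the exchanged images $\sigma(x_0),\sigma(b)$ do lie in a common $Y_j$, being adjacent in $Y$ and distinct from $y_0$); dually, if the swap is across an edge $\{a,b\}$ contained in some $X_i$ with $\sigma(a)=y_0$, then the exchanged images are $\sigma(x_0)$ and $\sigma(b)$, which may lie in different $Y_j$ (though $a,b$ lie in a common $X_i$). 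In every case at least one of the two conditions does hold, and either condition alone forces $\widetilde M(\sigma)=\widetilde M(\sigma')$: exchanging images at two vertices of the same $X_i$ leaves each row's multiset of image-components unchanged, while exchanging two images lying in the same $Y_j$ leaves each vertex's column contribution unchanged. So your argument goes through verbatim once the ``and'' is weakened to an ``or''.
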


For more information about $\mathcal{M}$ (including size estimates), we direct the reader to \cite{barvinok,diaconis} and the references therein. Note that we always have $|\mathcal M|\geq 2$ (so that $\FS(X,Y)$ is disconnected) since $r, s \geq 2$ by the definition of a cut vertex.  We also remark that, as will be apparent in the proof, this lower bound on the number of connected components of $\FS(X,Y)$ is an equality when the graphs $X\vert_{V(X_i) \cup \{x_0\}}$ and $Y\vert_{V(Y_j) \cup \{y_0\}}$ are all complete graphs.

\begin{proof}
Let $\Sigma_0=\{\sigma \in V(\FS(X,Y)): \sigma(x_0)=y_0\}$.  For each $\sigma \in \Sigma_0$, define the $r \times s$ \emph{incidence matrix} $M(\sigma)=(m_{i,j}(\sigma)) \in \mathcal{M}$ via $m_{i,j}(\sigma)=|\sigma(V(X_i)) \cap V(Y_j)|$. It is immediate that every $M \in \mathcal{M}$ arises as $M(\sigma)$ for some $\sigma$. Therefore, it suffices to show that $M(\sigma)=M(\sigma')$ whenever $\sigma,\sigma'\in\Sigma_0$ are in the same connected component of $\FS(X,Y)$. Fix some $\sigma \in \Sigma_0$. We will actually prove the following stronger assertion for every $\tau \in V(\FS(X,Y))$ in the same connected component as $\sigma$:
\begin{itemize}
    \item if $\tau\in V(\Sigma_0)$, then $M(\tau)=M(\sigma)$;
    \item if $X_{i_0}$ is the connected component of $X$ containing $\tau^{-1}(y_0)$, then $|\tau(V(X_{i_0}) \cup \{x_0\}) \cap V(Y_j)|=m_{i_0,j}(\sigma)$ and $|\tau(V(X_i)) \cap V(Y_j)|=m_{i,j}(\sigma)$ for every $i \neq i_0$.
\end{itemize}
Let us assume that these conditions hold for some $\tau$ in the same connected component as $\sigma$. We will show that the claim also holds for every vertex $\tau'$ that is adjacent to $\tau$ in $\FS(X,Y)$, which will complete the proof. Let $\tau'$ be obtained from $\tau$ by the application of an $(X,Y)$-friendly swap across the edge $\{x_1,x_2\}\in E(X)$. 

First, suppose that $\tau\in V(\Sigma_0)$ (i.e., $\tau(x_0)=y_0$).  If $x_0\not\in\{x_1,x_2\}$, then $\{x_1,x_2\}$ lies within a single connected component $X_i$ of $X$, so $\tau' \in V(\Sigma_0)$ and $M(\tau')=M(\tau)$.  It is also straightforward to check that the claim holds if instead (without loss of generality) $x_1=x_0$ and $x_2 \in V(X_{i_0})$.

Second, suppose that $\tau^{-1}(y_0) \in V(X_{i_0})$.  Again, the result is straightforward if $x_0 \notin \{x_1,x_2\}$ or if $y_0 \notin \{\tau(x_1), \tau(x_2)\}$.  The only remaining case is (without loss of generality) where $x_0=x_1$ and $\tau(x_2)=y_0$ (where we know that $x_2 \in X_{i_0}$).  Here, the desired condition is again easy to check.
\end{proof}

Part of Wilson's Theorem~\ref{ThmWilson} says that $\FS(\mathsf{Star}_n,Y)$ is disconnected whenever $Y$ is bipartite. The following result generalizes this to all bipartite $X$ and $Y$. As before, let $\sgn(\sigma)$ be $1$ if $\sigma$ is an even permutation and $-1$ if $\sigma$ is an odd permutation.  

\begin{proposition}\label{prop6}
If $X$ and $Y$ are bipartite graphs, each on $n \geq 3$ vertices, then $\FS(X,Y)$ is disconnected.
\end{proposition}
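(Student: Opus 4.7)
The plan is to produce a $\mathbb{Z}/2\mathbb{Z}$-valued function on $V(\FS(X,Y))$ that is preserved by every $(X,Y)$-friendly swap yet takes both values. After identifying $V(X)=V(Y)=[n]$, fix a bipartition $\{A,B\}$ of $X$ and a bipartition $\{C,D\}$ of $Y$, and for each $\sigma\in\mathfrak S_n$ define
\[
f(\sigma) \;=\; |\sigma(A)\cap C| + \epsilon(\sigma)\pmod{2},
\]
where $\epsilon(\sigma)\in\{0,1\}$ records the parity of $\sigma$, so that $\sgn(\sigma)=(-1)^{\epsilon(\sigma)}$. Any two bijections with different values of $f$ will then necessarily lie in different connected components.

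The main step is verifying the invariance of $f$ under $(X,Y)$-friendly swaps. Suppose $\sigma'$ is obtained from $\sigma$ by a swap across $\{a,b\}\in E(X)$. Bipartiteness of $X$ forces one of $a,b$ to lie in $A$ and the other in $B$; bipartiteness of $Y$ forces exactly one of $\sigma(a),\sigma(b)$ to lie in $C$. A short case analysis then shows that the set $\sigma(A)\cap C$ either gains or loses exactly one element upon passing to $\sigma'$, so $|\sigma'(A)\cap C|$ differs from $|\sigma(A)\cap C|$ by $\pm 1$ and hence flips mod $2$. The swap itself is a transposition, so $\epsilon$ also flips. Adding, $f(\sigma')=f(\sigma)$.

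Finally, I would exhibit two bijections at which $f$ differs. Since $n\geq 3$, at least one of $A,B$ has size at least $2$; without loss of generality suppose $|A|\geq 2$, and pick distinct $i,j\in A$. For any $\sigma\in\mathfrak S_n$, let $\sigma'=\sigma\circ (i\,j)$. Because $(i\,j)$ permutes $A$, we have $\sigma'(A)=\sigma(A)$, so $|\sigma'(A)\cap C|=|\sigma(A)\cap C|$; meanwhile $\epsilon(\sigma')\neq\epsilon(\sigma)$. Thus $f(\sigma')\neq f(\sigma)$, and these two bijections lie in distinct connected components of $\FS(X,Y)$. There is no real obstacle here beyond keeping straight the two cases in how the swap affects $\sigma(A)\cap C$ and confirming that $n\geq 3$ guarantees two vertices in a common part of the bipartition of $X$.
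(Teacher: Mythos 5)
Your proof is correct and follows essentially the same route as the paper: the invariant $f(\sigma)=|\sigma(A)\cap C|+\epsilon(\sigma)\bmod 2$ is exactly the paper's quantity $p(\sigma)=|\sigma(A_X)\cap A_Y|+\frac{\sgn(\sigma)+1}{2}$ taken modulo $2$, and the final step (using $n\geq 3$ to find two vertices in the same part of the bipartition of $X$ and composing with that transposition) is identical to the paper's argument.
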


\begin{proof}
We may assume $V(X)=V(Y)=[n]$. Let $\{A_X,B_X\}$ be a bipartition of $X$, and let $\{A_Y,B_Y\}$ be a bipartition of $Y$. For each $\sigma\in\mathfrak S_n$, let \[p(\sigma)=|\sigma(A_X) \cap A_Y|+\frac{\sgn(\sigma)+1}{2}.\]
We claim that if $\sigma$ and $\sigma'$ are adjacent in $\FS(X,Y)$, then $p(\sigma)$ and $p(\sigma')$ have the same parity. Indeed, suppose $\sigma'$ is obtained from $\sigma$ by performing an $(X,Y)$-friendly swap across $\{i,j\}$. This means that $\{i,j\}\in E(X)$ and $\{\sigma(i),\sigma(j)\}\in E(Y)$. The numbers $i$ and $j$ belong to different sets in the bipartition $\{A_X,B_X\}$, and the numbers $\sigma(i)$ and $\sigma(j)$ belong to different sets in the bipartition $\{A_Y,B_Y\}$. Since $\sigma'=\sigma\circ(i\,j)$, it follows that $|\sigma(A_X)\cap A_Y|$ and $|\sigma'(A_X)\cap A_Y|$ differ by exactly $1$ and that $(\sgn(\sigma)+1)/2$ and $(\sgn(\sigma')+1)/2$ differ by exactly $1$. This proves the claim that $p(\sigma)\equiv p(\sigma')\pmod 2$. It follows that $p(\tau)\equiv p(\tau')\pmod 2$ whenever $\tau$ and $\tau'$ are in the same connected component of $\FS(X,Y)$. 

Now fix $\tau\in\mathfrak S_n$. Since $n\geq 3$, there exist elements $a$ and $b$ of $[n]$ that lie in the same set in the bipartition $\{A_X,B_X\}$. Let $\tau'=\tau\circ (a\, b)$. Then $p(\tau)$ and $p(\tau')$ differ by $1$, so $\tau$ and $\tau'$ must lie in different connected components of $\FS(X,Y)$. 
\end{proof}

\begin{remark}\label{Rem1}
Note that $\mathsf{Star}_n$ is bipartite with bipartition $\{[n-1],\{n\}\}$. Suppose that $n\geq 3$ and that $Y$ is a biconnected graph on the vertex set $[n]$ that is not isomorphic to $\mathsf{Cycle}_n$ or $\theta_0$. If $Y$ is bipartite with bipartition $\{A_Y,B_Y\}$, then it follows from Theorem~\ref{ThmWilson} that $\FS(\mathsf{Star}_n,Y)$ has exactly $2$ connected components. Appealing to the proof of Proposition~\ref{prop6}, we find that the connected components of $\FS(\mathsf{Star}_n,Y)$ must be precisely \[\{\sigma\in\mathfrak S_n:p(n)\equiv 0\pmod 2\}\quad\text{and}\quad\{\sigma\in\mathfrak S_n:p(n)\equiv 1\pmod 2\},\] where \[p(\sigma)=|\sigma([n-1])\cap A_Y|+\frac{\sgn(\sigma)+1}{2}.\] Now suppose $X$ is a graph with vertex set $[n]$ such that $\mathsf{Star}_n$ is a proper subgraph of $X$. This means that there exists an edge $\{i,j\}\in E(X)$ with $n \notin \{i,j\}$. There exists $\sigma\in\mathfrak S_n$ such that $\sigma(i)$ and $\sigma(j)$ are adjacent in $Y$. The permutation $\sigma'=\sigma\circ(i\, j)$ is adjacent to $\sigma$ in $\FS(X,Y)$, and $p(\sigma)\not\equiv p(\sigma')\pmod 2$. This means that $\sigma$ and $\sigma'$ are in different connected components of $\FS(\mathsf{Star}_n,Y)$, but are in the same connected component of $\FS(X,Y)$. Since each of the $2$ connected components of $\FS(\mathsf{Star}_n,Y)$ is contained in a connected component of $\FS(X,Y)$, it follows that $\FS(X,Y)$ is connected.  This fact demonstrates a sense in which Wilson's result is sharp.
\end{remark}

\section{Paths}\label{Sec:Paths} 

In this section, we describe the structure of $\FS(\mathsf{Path}_n,Y)$. We will see that this characterization is closely related to a known result about Coxeter elements of Coxeter systems.  

In what follows, we let $G$ be a graph with vertex set $[n]$.  We obtain an \dfn{orientation} of $G$ by choosing a direction for each of its edges.  An orientation is \dfn{acyclic} if it does not contain a directed cycle.  Let $\Acyc(G)$ denote the set of acyclic orientations of $G$.
For every $\alpha\in\Acyc(G)$, we obtain a poset $([n],\leq_\alpha)$ by declaring that $i\leq_\alpha j$ if and only if the directed graph $\alpha$ contains a directed path starting at the vertex $i$ and ending at the vertex $j$. (When $i=j$, we can use the $1$-vertex path with no edges.) We write $\mathcal L(\alpha)$ for the set of linear extensions of $([n],\leq_\alpha)$, as defined in the introduction. For each permutation $\sigma\in\mathfrak S_n$, there is a unique acyclic orientation $\alpha_G(\sigma)\in\Acyc(G)$ such that $\sigma\in\mathcal L(\alpha_G(\sigma))$. Indeed, $\alpha_G(\sigma)$ is obtained by directing each edge $\{i,j\}$ of $G$ from $i$ to $j$ if and only if $\sigma^{-1}(i)<\sigma^{-1}(j)$.

\begin{figure}[ht]
\begin{center}
\includegraphics[height=2.88cm]{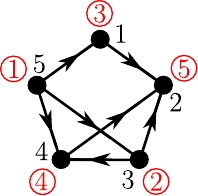}
\caption{A graph $G$ labeled according to the permutation $\sigma=53142$. Each vertex $i$ is labeled with a plain black label $i$ and a circled red label $\sigma^{-1}(i)$. The edges are oriented so as to form the acyclic orientation $\alpha_G(\sigma)$.}
\label{Fig1}
\end{center}  
\end{figure}

\begin{theorem}\label{Thm1}
Let $Y$ be a graph with vertex set $[n]$. For each $\alpha\in\Acyc(\overline Y)$, choose a linear extension $\sigma_\alpha\in\mathcal L(\alpha)$. Let $H_\alpha$ be the connected component of $\FS(\mathsf{Path}_n,Y)$ containing $\sigma_\alpha$. The connected component $H_\alpha$ depends only on $\alpha$ (not on the specific choice of $\sigma_\alpha$), and its vertex set is $\mathcal L(\alpha)$. Moreover, \[\FS(\mathsf{Path}_n,Y)=\bigoplus_{\alpha\in\Acyc(\overline Y)}H_\alpha.\]
\end{theorem}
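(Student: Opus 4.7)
The plan is to prove that the connected components of $\FS(\mathsf{Path}_n,Y)$ are precisely the sets $\mathcal{L}(\alpha)$ for $\alpha \in \Acyc(\overline{Y})$, by exploiting the map $\sigma \mapsto \alpha_{\overline{Y}}(\sigma)$ and invoking Proposition~\ref{PropToggles}. Once this characterization is in hand, the three claims (independence of the choice of $\sigma_\alpha$, the vertex set $\mathcal L(\alpha)$, and the direct-sum decomposition) are all immediate.

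First I would verify that $\alpha_{\overline{Y}}$ is constant along the edges of $\FS(\mathsf{Path}_n,Y)$. If $\sigma'=\sigma\circ(i\,\,i+1)$ comes from $\sigma$ by a friendly swap across $\{i,i+1\}\in E(\mathsf{Path}_n)$, then $\{\sigma(i),\sigma(i+1)\}\in E(Y)$, so this pair is \emph{not} an edge of $\overline{Y}$. The orientation assigned by $\alpha_{\overline{Y}}(\sigma)$ to any edge $\{a,b\}$ of $\overline{Y}$ depends only on the relative order of $\sigma^{-1}(a)$ and $\sigma^{-1}(b)$, and the only value pair whose positions in the one-line notation are permuted by the swap is $(\sigma(i),\sigma(i+1))$. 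Hence $\alpha_{\overline{Y}}(\sigma')=\alpha_{\overline{Y}}(\sigma)$.

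Next I would check that $\sigma\in\mathcal{L}(\alpha)$ if and only if $\alpha_{\overline{Y}}(\sigma)=\alpha$, so that $\{\mathcal{L}(\alpha):\alpha\in\Acyc(\overline{Y})\}$ partitions $\mathfrak{S}_n$. The ``$\Leftarrow$'' direction follows by propagating the inequalities $\sigma^{-1}(i_s)<\sigma^{-1}(i_{s+1})$ along any directed path $i=i_0\to\cdots\to i_k=j$ in $\alpha$; the ``$\Rightarrow$'' direction is immediate since every edge of $\overline{Y}$ is oriented in $\alpha$. Combined with the previous paragraph, this already shows that each connected component of $\FS(\mathsf{Path}_n,Y)$ is contained in a unique $\mathcal{L}(\alpha)$.

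To finish, I would show the reverse inclusion: each $\mathcal{L}(\alpha)$ lies in a single connected component. Setting $P=([n],\leq_\alpha)$, suppose $\sigma\in\mathcal{L}(\alpha)$ and $t_i(\sigma)\neq\sigma$. Then $\sigma(i)\not\leq_P\sigma(i+1)$, and since $\sigma$ is a linear extension we cannot have $\sigma(i+1)<_P\sigma(i)$ either; hence $\sigma(i)$ and $\sigma(i+1)$ are incomparable in $P$. In particular $\{\sigma(i),\sigma(i+1)\}\notin E(\overline{Y})$ (a direct edge in $\overline{Y}$ would force comparability), so $\{\sigma(i),\sigma(i+1)\}\in E(Y)$, and $t_i(\sigma)$ is obtained from $\sigma$ by an $(\mathsf{Path}_n,Y)$-friendly swap. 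By Proposition~\ref{PropToggles}, any two elements of $\mathcal{L}(\alpha)$ are connected by a sequence of toggles, hence by a sequence of friendly swaps, so they lie in the same component of $\FS(\mathsf{Path}_n,Y)$. The theorem follows.

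The step I expect to require the most care is the identification of nontrivial toggle operators with friendly swaps, where one must rule out the possibility of comparability via a multi-edge directed path in $\alpha$; this is handled by the observation that among linear extensions, $\sigma(i)\not\leq_P\sigma(i+1)$ forces outright incomparability. Once that is in place, Proposition~\ref{PropToggles} carries out the real work, and the rest of the proof is bookkeeping.
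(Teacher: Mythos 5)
Your proposal is correct and follows essentially the same route as the paper's proof: both identify $(\mathsf{Path}_n,Y)$-friendly swaps with nontrivial toggle operators for the poset $([n],\leq_{\alpha_{\overline Y}(\sigma)})$, observe that swaps preserve $\alpha_{\overline Y}$, and let Proposition~\ref{PropToggles} do the work of connecting all of $\mathcal L(\alpha)$. The extra care you take with the incomparability of $\sigma(i)$ and $\sigma(i+1)$ is exactly the observation the paper makes (consecutive entries of a linear extension are either a cover or incomparable), so no genuinely different ideas are involved.
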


\begin{proof}
Choose distinct $\sigma,\sigma'\in\mathfrak S_n$, and let $\alpha=\alpha_{\overline Y}(\sigma)$. The permutations $\sigma$ and $\sigma'$ are adjacent in $\FS(\mathsf{Path}_n,Y)$ if and only if there exists $i\in[n-1]$ such that $\{\sigma(i),\sigma(i+1)\}$ is an edge in $Y$ and $\sigma'=\sigma\circ (i\,\,i+1)$. Notice that in the poset $([n],\leq_\alpha)$, the element $\sigma(i+1)$ either covers $\sigma(i)$ or is incomparable to $\sigma(i)$. This means that $\sigma(i)$ and $\sigma(i+1)$ are adjacent in $Y$ (i.e., nonadjacent in $\overline Y$) if and only if they are incomparable in $([n],\leq_\alpha)$. Letting $t_1,\ldots,t_{n-1}$ be the toggle operators from Proposition~\ref{PropToggles} (with respect to the poset $([n],\leq_\alpha)$), we see that $\sigma$ and $\sigma'$ are adjacent if and only if $\sigma'=t_i(\sigma)$ for some $i\in[n-1]$. It follows that if $\sigma$ and $\sigma'$ are adjacent in $\FS(\mathsf{Path}_n,Y)$, then $\alpha_{\overline Y}(\sigma)=\alpha_{\overline Y}(\sigma')$. Using Proposition~\ref{PropToggles}, we see that $\sigma$ and $\sigma'$ are in the same connected component of $\FS(\mathsf{Path}_n,Y)$ if and only if $\alpha_{\overline Y}(\sigma)=\alpha_{\overline Y}(\sigma')$. This is equivalent to the statement of the theorem. 
\end{proof}

It is well known that the number of acyclic orientations of a graph $G$ is equal to the evaluation $T_G(2,0)$ of the Tutte polynomial of $G$ (see \cite{Bollobas} for the definition of the Tutte polynomial).  Furthermore, a graph with at least $1$ edge has at least $2$ acyclic orientations. Hence, we have the following corollary. 

\begin{corollary}\label{Cor1}
Let $Y$ be a graph with vertex set $[n]$. The number of connected components of $\FS(\mathsf{Path}_n,Y)$ is $T_{\overline Y}(2,0)$. In particular, $\FS(\mathsf{Path}_n,Y)$ is connected if and only if $Y=K_n$. 
\end{corollary}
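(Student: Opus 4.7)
The corollary is essentially a direct consequence of the structure theorem (Theorem~\ref{Thm1}) combined with two standard facts, so my plan is to reduce the count of connected components to a count of acyclic orientations and then invoke these facts.

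First, I would apply Theorem~\ref{Thm1}, which exhibits the connected components of $\FS(\mathsf{Path}_n,Y)$ as the subgraphs $H_\alpha$ indexed by $\alpha\in\Acyc(\overline Y)$. Because the $H_\alpha$ are pairwise disjoint (their vertex sets are the distinct sets $\mathcal L(\alpha)$, which partition $\mathfrak S_n$ since every $\sigma$ lies in a unique $\mathcal L(\alpha_{\overline Y}(\sigma))$), the number of connected components equals $|\Acyc(\overline Y)|$. This handles the bookkeeping part.

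Next, I would cite the classical theorem of Stanley that for any graph $G$, the number of acyclic orientations is $|\Acyc(G)|=T_G(2,0)$; combined with the previous step this immediately gives the first claim of the corollary. For the ``in particular'' statement, I would argue that $\FS(\mathsf{Path}_n,Y)$ is connected if and only if $|\Acyc(\overline Y)|=1$, and then observe that a graph with no edges has exactly one (trivial) acyclic orientation, while a graph with at least one edge admits at least two distinct acyclic orientations. The latter fact can be seen by choosing any linear order $\pi$ on the vertex set and orienting every edge $\{i,j\}$ from the $\pi$-smaller endpoint to the $\pi$-larger one; the resulting orientation and its global reversal are both acyclic and differ on every edge, hence are distinct whenever $E(\overline Y)\neq\emptyset$. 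Therefore $|\Acyc(\overline Y)|=1$ iff $\overline Y$ has no edges iff $Y=K_n$.

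No step here is really an obstacle, since the heavy lifting has already been done in Theorem~\ref{Thm1}; the only mild subtlety is writing the ``at least two orientations'' argument cleanly enough that it is self-contained, and ensuring that we state Stanley's Tutte-polynomial identity with a citation rather than a proof.
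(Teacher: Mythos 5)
Your proposal is correct and follows essentially the same route as the paper: invoke Theorem~\ref{Thm1} to identify the components with $\Acyc(\overline Y)$, cite the standard identity $|\Acyc(G)|=T_G(2,0)$, and note that a graph with at least one edge has at least two acyclic orientations (so connectedness forces $\overline Y$ edgeless, i.e.\ $Y=K_n$). The only difference is that you spell out the ``at least two orientations'' fact explicitly, which the paper simply asserts.
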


Recall that a \dfn{Coxeter system} is a pair $(W,S)$, where $W$ is a group with generating set $S=\{s_1,\ldots,s_n\}$ and presentation $W=\langle S:(s_is_j)^{m_{i,j}}=1\rangle$. Here, the exponents $m_{i,j}$ are elements of $\{1,2,3,\ldots\}\cup\{\infty\}$ such that $m_{ii}=1$ for all $i\in[n]$ and $m_{i,j}=m_{j,i}\geq 2$ whenever $i\neq j$. Note that the elements $s_i$ and $s_j$ commute if and only if $m_{i,j}\leq 2$. The \dfn{Coxeter graph}\footnote{Coxeter graphs often have edge labels that encode the exponents $m_{i,j}$, but we will ignore those labels here.} associated to the Coxeter system $(W,S)$ is the simple graph with vertex set $S$ in which vertices $s_i$ and $s_j$ are adjacent if and only if $m_{i,j}\geq 3$ (i.e., $s_is_j\neq s_js_i$). A \dfn{Coxeter element} (several authors say \dfn{standard Coxeter element}) of $(W,S)$ is an element of $W$ of the form $s_{\sigma(1)}\cdots s_{\sigma(n)}$, where $\sigma\in\mathfrak S_n$.

Now let $Y$ be a graph with vertex set $[n]$. There exists a Coxeter system $(W,S)$ whose Coxeter graph is $\overline{Y}$, where we identify the vertex $i\in[n]=V(\overline Y)$ with the element $s_i\in S$. With this identification, every permutation $\sigma\in\mathfrak S_n$ gives rise to a word $s_{\sigma(1)}\cdots s_{\sigma(n)}$, which represents a Coxeter element of $(W,S)$. Two such words represent the same element of $W$ if and only if one can be obtained from the other by repeatedly applying the commutation relations $s_is_j=s_js_i$, which hold when $i$ and $j$ are adjacent in $Y$. Applying such a commutation relation to a word $s_{\sigma(1)}\cdots s_{\sigma(n)}$ means that we swap the factors $s_{\sigma(i)}$ and $s_{\sigma(i+1)}$ for some $i\in[n-1]$ such that $\{\sigma(i),\sigma(i+1)\}$ is an edge in $Y$. This corresponds precisely to applying a $(\mathsf{Path}_n,Y)$-friendly swap to the permutation $\sigma$. Hence, the Coxeter elements $s_{\sigma(1)}\cdots s_{\sigma(n)}$ and $s_{\sigma'(1)}\cdots s_{\sigma'(n)}$ are equal if and only if $\sigma$ and $\sigma'$ are in the same connected component of $\FS(\mathsf{Path}_n,Y)$. It follows that Theorem~\ref{Thm1} is equivalent to the following standard theorem about Coxeter elements (see \cite{Cartier, Develin}). 

\begin{theorem}[\!\!\!\cite{Cartier, Develin}]\label{ThmCoxeter} 
Let $(W,S)$ be a Coxeter system with Coxeter graph $G$, and write $S=\{s_1,\ldots,s_n\}$. Identify each vertex $s_i$ of $G$ with the element $i$ of $[n]$. For each acyclic orientation $\alpha\in\Acyc(G)$, choose a linear extension $\sigma_\alpha$ of $([n],\leq_\alpha)$. The Coxeter element $s_{\sigma_\alpha(1)}\cdots s_{\sigma_\alpha(n)}$ depends only on $\alpha$, not on the specific linear extension $\sigma_\alpha$. Furthermore, the map $\alpha\mapsto s_{\sigma_\alpha(1)}\cdots s_{\sigma_\alpha(n)}$ is a bijection from $\Acyc(G)$ to the set of Coxeter elements of $(W,S)$.       
\end{theorem}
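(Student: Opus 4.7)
The plan is to deduce Theorem~\ref{ThmCoxeter} from Theorem~\ref{Thm1} by identifying $(\mathsf{Path}_n, Y)$-friendly swaps with commutation relations among the Coxeter generators, where $Y = \overline{G}$. With this identification, the map $\Phi : \mathfrak{S}_n \to W$ sending $\sigma$ to the word $s_{\sigma(1)} \cdots s_{\sigma(n)}$ should descend to a well-defined bijection between the connected components of $\FS(\mathsf{Path}_n, Y)$ (which by Theorem~\ref{Thm1} are indexed by $\Acyc(\overline{Y}) = \Acyc(G)$) and the image of $\Phi$, which is by definition the set of Coxeter elements of $(W,S)$.

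First I would verify that $\Phi$ is constant on connected components. If $\sigma' = \sigma \circ (i \,\, i+1)$ arises from $\sigma$ by an $(\mathsf{Path}_n, Y)$-friendly swap, then $\{\sigma(i), \sigma(i+1)\} \in E(Y) = E(\overline{G})$, so $m_{\sigma(i), \sigma(i+1)} = 2$ and $s_{\sigma(i)}$ commutes with $s_{\sigma(i+1)}$ in $W$; hence $\Phi(\sigma) = \Phi(\sigma')$. Iterating along a path in $\FS(\mathsf{Path}_n, Y)$ gives $\Phi(\sigma_\alpha) = \Phi(\tau)$ for every $\tau \in \mathcal{L}(\alpha)$, so in particular the Coxeter element $s_{\sigma_\alpha(1)} \cdots s_{\sigma_\alpha(n)}$ depends only on $\alpha$, not on the choice of $\sigma_\alpha$.

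Conversely, to prove injectivity on components (and hence on $\Acyc(G)$), I would invoke two standard facts from the theory of Coxeter groups: (i) every Coxeter element of $(W,S)$ has Coxeter length exactly $n$, so $\Phi(\sigma)$ is a reduced expression; and (ii) by Matsumoto's theorem, any two reduced expressions for a fixed element of $W$ are related by a sequence of braid moves $\underbrace{s_i s_j s_i \cdots}_{m_{ij}} = \underbrace{s_j s_i s_j \cdots}_{m_{ij}}$. Since each generator appears exactly once in $\Phi(\sigma)$, no braid move with $m_{ij} \geq 3$ (which requires multiple copies of $s_i$ or $s_j$ in the word) can ever be applied in such a sequence, so only commutation moves $s_i s_j = s_j s_i$ with $\{i,j\} \in E(\overline{G}) = E(Y)$ arise; these are precisely the $(\mathsf{Path}_n, Y)$-friendly swaps. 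Thus $\Phi(\sigma) = \Phi(\sigma')$ forces $\sigma$ and $\sigma'$ to lie in the same connected component, and combining this with the preceding paragraph yields the desired bijection. The main obstacle in this plan is justifying fact (i), that Coxeter elements have Coxeter length $n$, which is a nontrivial structural result; one could alternatively bypass both (i) and Matsumoto via Viennot's heap-theoretic model, which gives the bijection with acyclic orientations directly.
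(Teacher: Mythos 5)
Your proposal is correct, but it does more than the paper does: the paper does not prove Theorem~\ref{ThmCoxeter} at all. It presents the theorem as a known result (citing Cartier--Foata and Develin--Macauley--Reiner) and only sets up the dictionary you also use --- commutation relations $s_is_j=s_js_i$ correspond exactly to $(\mathsf{Path}_n,Y)$-friendly swaps with $Y=\overline G$ --- while asserting \emph{without proof} the key fact that two words $s_{\sigma(1)}\cdots s_{\sigma(n)}$ and $s_{\sigma'(1)}\cdots s_{\sigma'(n)}$ represent the same element of $W$ if and only if one is obtained from the other by commutations; Theorem~\ref{ThmCoxeter} is then declared equivalent to Theorem~\ref{Thm1}. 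You instead derive that key fact from standard Coxeter theory: since each generator occurs exactly once, the word is reduced (Coxeter elements have length $n$), Matsumoto's theorem connects any two reduced expressions by braid moves, and an inductive observation shows that a braid move of order $m_{ij}\geq 3$ can never become applicable because commutation moves preserve the property that every generator appears exactly once. This turns the paper's ``equivalence plus citation'' into an actual deduction of Theorem~\ref{ThmCoxeter} from Theorem~\ref{Thm1}, at the cost of importing two nontrivial external inputs (reducedness of products of distinct generators and Matsumoto's theorem), which you correctly flag; the paper instead imports the whole commutation-equivalence statement as a black box. The only cosmetic gap is that the ``no braid move ever applies'' step deserves the explicit induction along the sequence of moves (commutations preserve the once-each property, so at every stage no alternating subword of length $\geq 3$ exists), but this is exactly the argument you gesture at, and it is sound.
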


It is worth mentioning that Theorem~\ref{ThmCoxeter}, and hence Theorem~\ref{Thm1}, can also be interpreted outside of a Coxeter-theoretic framework
using certain Cartier--Foata traces \cite{Cartier}.

We end this section with a brief description of how the connected components of $\FS(\mathsf{Path}_n,Y)$ inherit their structure from the graph $\FS(\mathsf{Path}_n,K_n)$. It is immediate that every edge in $\FS(\mathsf{Path}_n,Y)$ is also an edge in $\FS(\mathsf{Path}_n,K_n)$. On the other hand, suppose $\sigma,\tau\in\mathfrak S_n$ are in the same connected component of $\FS(\mathsf{Path}_n,Y)$ and that they are adjacent in $\FS(\mathsf{Path}_n,K_n)$. There must exist $i\in[n-1]$ such that $\tau=\sigma\circ(i\,\,i+1)$. Since $\sigma$ and $\tau$ are in the same connected component of $\FS(\mathsf{Path}_n,Y)$, Theorem~\ref{Thm1} tells us that $\alpha_{\overline Y}(\sigma)=\alpha_{\overline Y}(\tau)$. This implies that $\sigma(i)$ and $\sigma(i+1)$ are adjacent in $Y$, so $\tau$ is obtained from $\sigma$ by performing a $(\mathsf{Path}_n,Y)$-friendly swap across $\{i,i+1\}$. Hence, $\sigma$ and $\tau$ are adjacent in $\FS(\mathsf{Path}_n,Y)$. This yields the following corollary to Theorem~\ref{Thm1}. 

\begin{corollary}
Let $Y$ be a graph with vertex set $[n]$. Let $H_\alpha$ be as in Theorem~\ref{Thm1}, so that \[\FS(\mathsf{Path}_n,Y)=\bigoplus_{\alpha\in\Acyc(\overline Y)}H_\alpha.\] Then $H_\alpha=\FS(\mathsf{Path}_n,K_n)\vert_{\mathcal L(\alpha)}$ for every $\alpha\in\Acyc(\overline Y)$. 
\end{corollary}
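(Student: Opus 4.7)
The plan is to prove both inclusions of the equality $H_\alpha = \FS(\mathsf{Path}_n,K_n)\vert_{\mathcal{L}(\alpha)}$. By Theorem~\ref{Thm1}, both sides already have vertex set $\mathcal{L}(\alpha)$, so only the edge sets need to be compared.

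For the forward inclusion, any edge of $H_\alpha$ is in particular an edge of $\FS(\mathsf{Path}_n,Y)$; by Proposition~\ref{Prop1} applied to the inclusion $Y \hookrightarrow K_n$, it is also an edge of $\FS(\mathsf{Path}_n,K_n)$. Since its endpoints lie in $\mathcal{L}(\alpha)$, the edge survives in the induced subgraph $\FS(\mathsf{Path}_n,K_n)\vert_{\mathcal{L}(\alpha)}$.

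The reverse inclusion is the substantive direction, though it is essentially already laid out in the paragraph preceding the corollary. Suppose $\{\sigma,\tau\}$ is an edge of $\FS(\mathsf{Path}_n,K_n)\vert_{\mathcal{L}(\alpha)}$, so that $\sigma,\tau \in \mathcal{L}(\alpha)$ and $\tau = \sigma \circ (i\,\,i{+}1)$ for some $i \in [n-1]$. The goal is to show $\{\sigma(i),\sigma(i+1)\} \in E(Y)$, i.e., that this pair is not an edge of $\overline{Y}$. Assume for contradiction that it is. If $\alpha$ directs this edge from $\sigma(i)$ to $\sigma(i+1)$, then $\sigma(i) \leq_\alpha \sigma(i+1)$, but $\tau^{-1}(\sigma(i)) = i+1 > i = \tau^{-1}(\sigma(i+1))$ contradicts $\tau \in \mathcal{L}(\alpha)$. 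The opposite orientation yields $\sigma(i+1) \leq_\alpha \sigma(i)$, which similarly contradicts $\sigma \in \mathcal{L}(\alpha)$. Hence no such edge of $\overline{Y}$ exists, the swap across $\{i,i+1\}$ is $(\mathsf{Path}_n,Y)$-friendly, and $\{\sigma,\tau\}$ is an edge of $\FS(\mathsf{Path}_n,Y)$ joining two vertices of the connected component $H_\alpha$. The only obstacle here is careful bookkeeping of the interplay between positions, labels, and the orientation $\alpha$; no further machinery beyond Theorem~\ref{Thm1} and Proposition~\ref{Prop1} is needed.
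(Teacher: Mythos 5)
Your proof is correct and follows essentially the same route as the paper: the paper likewise observes that every edge of $\FS(\mathsf{Path}_n,Y)$ is an edge of $\FS(\mathsf{Path}_n,K_n)$, and for the converse takes $\sigma,\tau\in\mathcal L(\alpha)$ with $\tau=\sigma\circ(i\,\,i{+}1)$ and deduces from $\alpha_{\overline Y}(\sigma)=\alpha_{\overline Y}(\tau)$ that $\{\sigma(i),\sigma(i+1)\}\in E(Y)$, which is exactly the contradiction you spell out via the two possible orientations. Your write-up simply makes explicit the step the paper leaves as "this implies," so the two arguments coincide in substance.
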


\begin{remark}
Note that $\FS(\mathsf{Path}_n,K_n)$ is the Hasse diagram of the weak order on $\mathfrak S_n$. A subset $A$ of a poset $P$ is called \dfn{convex} if for all $a,c\in A$ and all $b\in P$ satisfying $a<_Pb<_Pc$, we have $b\in A$. Bj\"orner and Wachs \cite{Bjorner} showed that a subset $A\subseteq \mathfrak S_n$ is a convex subset of the weak order on $\mathfrak S_n$ if and only if $A$ is the set of linear extensions of a poset. It follows from Theorem~\ref{Thm1} that the set of vertices of a connected component of $\FS(\mathsf{Path}_n,Y)$ is a convex subset of the weak order on $\mathfrak S_n$. 
\end{remark}

\section{Cycles}\label{Sec:Cycles}

In this section, we describe the structure of $\FS(\mathsf{Cycle}_n,Y)$. We will see that many of the results in this section are similar in form to those in the previous section, with acyclic orientations and posets replaced by their appropriate toric analogues. One of the key components of the proof of Theorem~\ref{Thm1} was the fundamental Proposition~\ref{PropToggles}, which states that the group of bijections generated by the toggles $t_1,\ldots,t_{n-1}$ acts transitively on the set of linear extensions of an $n$-element poset. Some new ideas will be needed to establish the theorems in this section. 

Let $G$ be a graph with vertex set $[n]$. A \dfn{source} of an acyclic orientation $\alpha$ of $G$ is a vertex of in-degree $0$; a \dfn{sink} of $\alpha$ is a vertex of out-degree $0$. If $v$ is a source or a sink of $\alpha$, then we can obtain a new acyclic orientation of $G$ by reversing the directions of all edges incident to $v$. We call this operation a \dfn{flip}. Two acyclic orientations $\alpha,\alpha'\in\Acyc(G)$ are \dfn{torically equivalent}, denoted $\alpha\sim\alpha'$, if $\alpha'$ can be obtained from $\alpha$ via a sequence of flips. The equivalence classes in $\Acyc(G)/\!\!\sim$ are called \dfn{toric acyclic orientations}. We denote the toric acyclic orientation containing the acyclic orientation $\alpha$ by $[\alpha]_\sim$. 

Toric acyclic orientations have been studied in many different forms \cite{Chen, Eriksson, Macauley, Mosesjan, Pretzel, Speyer}; the article \cite{Develin} formalizes a systematic framework for their investigation. One of the reasons for the use of the word ``toric'' stems from their connection with hyperplane arrangements. Indeed, consider the \dfn{graphical arrangement} of the graph $G$, which is the hyperplane arrangement $\mathcal A(G)$ in $\mathbb R^n$ consisting of the hyperplanes of the form $\{x\in\mathbb R^n:x_i=x_j\}$ for all edges $\{i,j\}$ of $G$. Let $\mathcal A_{\tor}(G)=\pi(\mathcal A(G))$, where $\pi\colon \mathbb R^n\to\mathbb R^n/\mathbb Z^n$ is the natural projection map. The \dfn{toric chambers} of $\mathcal A_{\tor}(G)$ are the connected components of $(\mathbb R^n/\mathbb Z^n)\setminus\mathcal A_{\tor}(G)$. As shown in \cite{Develin}, there is a natural one-to-one correspondence between the toric chambers of $\mathcal A_{\tor}(G)$ and the toric acyclic orientations of $G$. This is analogous to the standard fact that the chambers of $\mathcal A(G)$ (the connected components of $\mathbb R^n\setminus\mathcal A(G)$) are in bijection with $\Acyc(G)$. 

Another (related) motivation for the term ``toric'' comes from observing that flips encode what happens to the acyclic orientation associated to a permutation when we cyclically shift the permutation. To make this more precise, we let $\varphi\colon [n]\to[n]$ be the cyclic permutation given by $\varphi(i)=i+1 \pmod{n}$ and consider the map $\varphi^*\colon \mathfrak S_n\to\mathfrak S_n$ defined by $\varphi^*(\sigma)=\sigma\circ\varphi$. Recall that $\alpha_G(\sigma)$ denotes the unique acyclic orientation of $G$ such that $\sigma\in\mathcal L(\alpha_G(\sigma))$. The vertex $\sigma(1)$ of $G$ is a source of $\alpha_G(\sigma)$. It is not hard to show that the acyclic orientation $\alpha_G(\varphi^*(\sigma))$ is obtained from $\alpha_G(\sigma)$ by flipping the vertex $\sigma(1)$ from a source into a sink. Consequently, the acyclic orientations $\alpha_G((\varphi^*)^k(\sigma))$ for $0\leq k\leq n-1$ are all torically equivalent. It is also helpful to keep in mind that the map $\varphi^*$ has order $n$ and that the acyclic orientation $\alpha_G((\varphi^*)^{-1}(\sigma))=\alpha_G((\varphi^*)^{n-1}(\sigma))$ is obtained from $\alpha_G(\sigma)$ by flipping the vertex $\sigma(n)$ from a sink into a source. 
Note that $\varphi$ is an automorphism of $\mathsf{Cycle}_n$. Proposition~\ref{Prop3} tells us that for every graph $Y$ with vertex set $[n]$, the map $\varphi^*$ is an automorphism of $\FS(\mathsf{Cycle}_n,Y)$. Given an induced subgraph $H$ of $\FS(\mathsf{Cycle}_n,Y)$, we write $\varphi^*(H)$ for the induced subgraph of $\FS(\mathsf{Cycle}_n,Y)$ on the vertex set $\varphi^*(V(H))$. 

We define a \dfn{linear extension}\footnote{Note that our notion of a linear extension of a toric acyclic orientation differs from the definition of a ``toric total order'' in \cite{Develin}. Indeed, that article defines a toric total order of $[\alpha]_\sim$ to be a cyclic equivalence class $\{(\varphi^*)^k(\sigma):0\leq k\leq n-1\}$ such that $\sigma$ is (using our definition) a linear extension of $[\alpha]_{\sim}$.} of $[\alpha]_{\sim}$ to be a permutation $\sigma$ such that there exists an acyclic orientation $\widehat\alpha\in[\alpha]_{\sim}$ with $\sigma\in\mathcal L(\widehat\alpha)$. Letting $\mathcal L([\alpha]_\sim)$ denote the set of linear extensions of $[\alpha]_{\sim}$, we have \[\mathcal L([\alpha]_{\sim})=\bigcup_{\widehat\alpha\in[\alpha]_{\sim}}\mathcal L(\widehat\alpha).\] For every permutation $\sigma$, the unique toric acyclic orientation of $G$ that has $\sigma$ as a linear extension is $[\alpha_G(\sigma)]_{\sim}$.    

We will also need a new equivalence relation on acyclic orientations of a graph.  Suppose $\alpha\in\Acyc(G)$ has a source $u$ and a sink $v$ such that $u$ and $v$ are not adjacent. We can simultaneously flip $u$ into a sink and flip $v$ into a source; we call this a \dfn{double flip}. We say two acyclic orientations $\alpha,\alpha'\in\Acyc(G)$ are \dfn{double-flip equivalent}, denoted $\alpha\approx\alpha'$, if $\alpha'$ can be obtained from $\alpha$ via a sequence of double flips.  Let $[\alpha]_{\approx}$ denote the equivalence class in $\Acyc(G)/\!\!\approx$ that contains $\alpha$.  Note that every equivalence class in $\Acyc/\!\!\sim$ is a union of equivalence classes in $\Acyc/\!\!\approx$ because each double flip can be viewed as two individual flips performed one after the other; here, it is crucial that we require $u$ and $v$ to non-adjacent in the definition of a double flip.  A \dfn{linear extension} of a double-flip equivalence class $[\alpha]_{\approx}$ is a permutation $\sigma$ such that $\sigma\in\mathcal L(\widehat\alpha)$ for some $\widehat\alpha\in[\alpha]_{\approx}$. Letting $\mathcal L([\alpha]_{\approx})$ denote the set of linear extensions of $[\alpha]_{\approx}$, we have \[\mathcal L([\alpha]_{\approx})=\bigcup_{\widehat\alpha\in[\alpha]_{\approx}}\mathcal L(\widehat\alpha).\]

\begin{figure}[ht]
\begin{center}
\includegraphics[width=.8\linewidth]{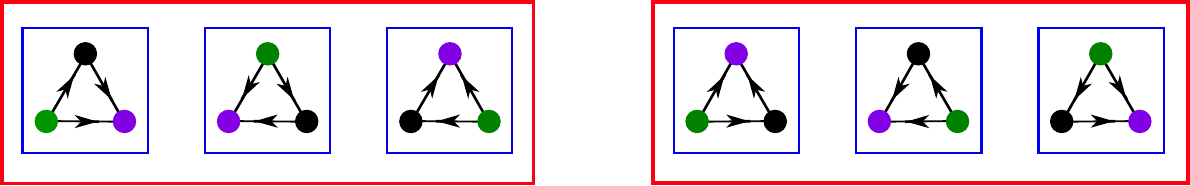}
\caption{Each red box encompasses a toric acyclic orientation of $K_3$. Each blue box encompasses a double-flip equivalence class of $K_3$. Note that each double-flip equivalence class contains a single acyclic orientation; it is impossible to perform a double flip on an acyclic orientation of a complete graph because sources and sinks are all adjacent. We have colored sinks purple and sources green.}
\label{Fig2}
\end{center}  
\end{figure}

\begin{figure}[ht]
\begin{center}
\includegraphics[width=\linewidth]{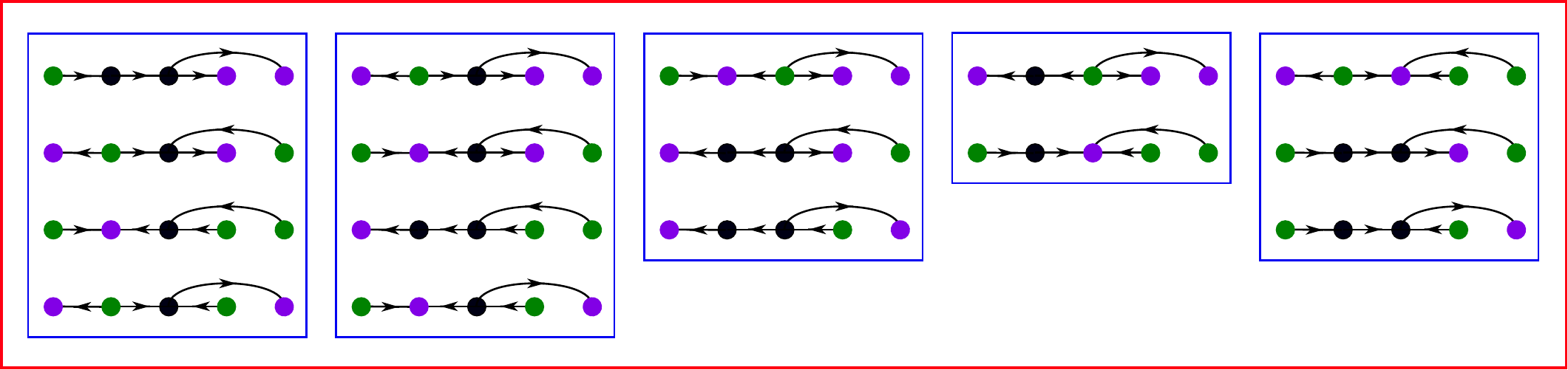}
\caption{This graph has only $1$ toric acyclic orientation, which is the union of the $5$ double-flip equivalence classes encompassed in the blue boxes. We have colored sinks purple and sources green.}
\label{Fig3}
\end{center}  
\end{figure}

The next theorem tells us that the connected components of $\FS(\mathsf{Cycle}_n,Y)$ are the sets of linear extensions of double-flip equivalence classes of the acyclic orientations of $\overline Y$. The basic idea behind this result is as follows. We already know by Theorem~\ref{Thm1} that the connected components of $\FS(\mathsf{Path}_n,Y)$ are the sets of linear extensions of the acyclic orientations of $\overline Y$. Performing a $(\mathsf{Cycle}_n,Y)$-friendly swap to a bijection $\sigma$ across the edge $\{n,1\}$ corresponds precisely to changing the source $\sigma(1)$ into a sink and changing the sink $\sigma(n)$ into a source. When we perform this friendly swap, the vertices $\sigma(1)$ and $\sigma(n)$ must be adjacent in $Y$, meaning they are not adjacent in $\overline Y$. This is precisely why we forbid $u$ and $v$ from being adjacent in the definition of a double flip. One could also view each double-flip equivalence class as a cyclic version of a Cartier--Foata trace \cite{Cartier}, where commutations are additionally allowed to wrap around the end of the word.

\begin{theorem}\label{Thm2}
Let $Y$ be a graph with vertex set $[n]$. For each $[\alpha]_{\approx}\in\Acyc(\overline Y)/{\approx}$, choose a linear extension $\sigma_{[\alpha]_{\approx}}\in\mathcal L([\alpha]_{\approx})$. Let $H_{[\alpha]_{\approx}}$ be the connected component of $\FS(\mathsf{Cycle}_n,Y)$ containing $\sigma_{[\alpha]_{\approx}}$.  The connected component $H_{[\alpha]_{\approx}}$ depends only on $[\alpha]_{\approx}$ (not on the specific choice of $\sigma_{[\alpha]_{\approx}}$), and its vertex set is $\mathcal L([\alpha]_{\approx})$. Moreover, \[\FS(\mathsf{Cycle}_n,Y)=\bigoplus_{[\alpha]_{\approx}\in\Acyc(\overline Y)/\approx}H_{[\alpha]_{\approx}}.\]
\end{theorem}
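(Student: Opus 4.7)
The plan is to mimic the proof of Theorem~\ref{Thm1}, with the equality $\alpha_{\overline Y}(\sigma)=\alpha_{\overline Y}(\sigma')$ replaced by the weaker relation $\alpha_{\overline Y}(\sigma)\approx\alpha_{\overline Y}(\sigma')$. Concretely, I will prove that two permutations $\sigma,\sigma'\in\mathfrak S_n$ lie in the same connected component of $\FS(\mathsf{Cycle}_n,Y)$ if and only if $\alpha_{\overline Y}(\sigma)\approx \alpha_{\overline Y}(\sigma')$; the entire theorem then follows at once, because the set of permutations whose associated acyclic orientation lies in a fixed class $[\alpha]_\approx$ is exactly $\mathcal L([\alpha]_\approx)$.

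For the forward direction, it suffices by induction on path length to treat a single edge $\{\sigma,\sigma'\}$ of $\FS(\mathsf{Cycle}_n,Y)$. If the corresponding swap is across some $\{i,i+1\}$ with $i\in[n-1]$, then it is a $(\mathsf{Path}_n,Y)$-friendly swap, and the proof of Theorem~\ref{Thm1} gives $\alpha_{\overline Y}(\sigma)=\alpha_{\overline Y}(\sigma')$. If instead the swap is across $\{n,1\}$, set $u=\sigma(1)$ and $v=\sigma(n)$. Since $\sigma\in\mathcal L(\alpha_{\overline Y}(\sigma))$, the vertex $u$ is a source and $v$ is a sink of $\alpha_{\overline Y}(\sigma)$; since $\{u,v\}\in E(Y)$, the vertices $u$ and $v$ are non-adjacent in $\overline Y$. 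A direct edge-by-edge check shows that in passing from $\alpha_{\overline Y}(\sigma)$ to $\alpha_{\overline Y}(\sigma')$, every edge of $\overline Y$ incident to $u$ or to $v$ reverses its orientation and every other edge is unchanged. Hence $\alpha_{\overline Y}(\sigma')$ is exactly the double flip of $\alpha_{\overline Y}(\sigma)$ at $u$ and $v$.

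For the backward direction, I induct on the number of double flips needed to transform $\alpha_{\overline Y}(\sigma)$ into $\alpha_{\overline Y}(\sigma')$. The base case reduces to Theorem~\ref{Thm1}, because $\FS(\mathsf{Path}_n,Y)$ is a spanning subgraph of $\FS(\mathsf{Cycle}_n,Y)$. For the inductive step, the key is the following existence lemma: if $\alpha\in\Acyc(\overline Y)$ has a source $u$ and a sink $v$ with $u\neq v$, then some $\sigma\in\mathcal L(\alpha)$ satisfies $\sigma(1)=u$ and $\sigma(n)=v$. To prove this, note that a source is a minimal element of $([n],\leq_\alpha)$ (any directed path ending at it would force a non-zero in-degree) and symmetrically a sink is maximal; moreover, since $u$ is a source and $v$ is a sink, no directed path in $\alpha$ passes through either, so the poset structure on $[n]\setminus\{u,v\}$ inherited from $\leq_\alpha$ agrees with the one coming from the induced orientation. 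Placing $u$ first, $v$ last, and interleaving any linear extension of this induced poset in between produces the desired $\sigma$. With this lemma in hand, given a double flip from $\alpha$ to $\alpha'$ at non-adjacent source $u$ and sink $v$ of $\alpha$, I pick such a $\sigma\in\mathcal L(\alpha)$; the wrap-around swap is $(\mathsf{Cycle}_n,Y)$-friendly because $\{u,v\}\in E(Y)$, and by the forward computation the result lies in $\mathcal L(\alpha')$. Invoking Theorem~\ref{Thm1} within each of $\mathcal L(\alpha)$ and $\mathcal L(\alpha')$ completes the inductive step.

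The only genuinely new ingredient beyond Theorem~\ref{Thm1} is the identification of the wrap-around swap with a double flip, and the non-adjacency requirement in the definition of a double flip is exactly what is needed for the swap to be $(\mathsf{Cycle}_n,Y)$-friendly, so the two sides match up cleanly. The main obstacle I anticipate is verifying the existence lemma for linear extensions with prescribed first and last entries, but even this is routine once one observes that sources and sinks are precisely the minimal and maximal elements of $\leq_\alpha$.
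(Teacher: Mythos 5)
Your proposal is correct and follows essentially the same route as the paper: both reduce the theorem to the claim that $\sigma$ and $\sigma'$ lie in the same component of $\FS(\mathsf{Cycle}_n,Y)$ if and only if $\alpha_{\overline Y}(\sigma)\approx\alpha_{\overline Y}(\sigma')$, prove the forward direction by identifying the wrap-around swap with a double flip (and the path swaps with Theorem~\ref{Thm1}), and prove the converse one double flip at a time by choosing a linear extension that places the flipped source first and the flipped sink last. The only difference is presentational: you spell out the induction on the number of double flips and justify the existence of the required linear extension explicitly, steps the paper treats as straightforward.
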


\begin{proof}
The theorem is equivalent to the statement that two permutations $\sigma,\sigma'\in\mathfrak S_n$ are in the same connected component of $\FS(\mathsf{Cycle}_n,Y)$ if and only if $\alpha_{\overline Y}(\sigma)\approx\alpha_{\overline Y}(\sigma')$. First, suppose $\sigma$ and $\sigma'$ are adjacent in $\FS(\mathsf{Cycle}_n,Y)$. This means that $\sigma'$ is obtained from $\sigma$ by performing a $(\mathsf{Cycle}_n,Y)$-friendly swap across an edge $\{i,j\}\in E(\mathsf{Cycle}_n)$. If $i\leq n-1$ and $j=i+1$, then this $(\mathsf{Cycle}_n,Y)$-friendly swap is also a $(\mathsf{Path}_n,Y)$-friendly swap, so $\sigma$ and $\sigma'$ are adjacent in $\FS(\mathsf{Path}_n,Y)$. By Theorem~\ref{Thm1}, this implies that $\alpha_{\overline Y}(\sigma)=\alpha_{\overline Y}(\sigma')$. Now suppose $\{i,j\}=\{n,1\}$. The vertices $\sigma(1)$ and $\sigma(n)$ are adjacent in $Y$, so they are not adjacent in $\overline Y$. The former vertex is a source in $\alpha_{\overline Y}(\sigma)$, and the latter is a sink in $\alpha_{\overline Y}(\sigma)$. It is straightforward to see that $\alpha_{\overline Y}(\sigma')$ is obtained from $\alpha_{\overline Y}(\sigma)$ by performing a double flip that simultaneously flips $\sigma(1)$ from a source to a sink and flips $\sigma(n)$ from a sink to a source. Consequently, $\alpha_{\overline Y}(\sigma)\approx\alpha_{\overline Y}(\sigma')$. This shows that $\alpha_{\overline Y}(\sigma)\approx\alpha_{\overline Y}(\sigma')$ whenever $\sigma$ and $\sigma'$ are adjacent in $\FS(\mathsf{Cycle}_n,Y)$, which implies that $\alpha_{\overline Y}(\sigma)\approx\alpha_{\overline Y}(\sigma')$ whenever $\sigma$ and $\sigma'$ are in the same connected component of $\FS(\mathsf{Cycle}_n,Y)$. 

To prove the converse, notice that the vertex set of a connected component of $\FS(\mathsf{Cycle}_n,Y)$ is a union of vertex sets of connected components of $\FS(\mathsf{Path}_n,Y)$. By Theorem~\ref{Thm1}, the vertex set of each connected component of $\FS(\mathsf{Path}_n,Y)$ is the set of linear extensions of some acyclic orientation of
$\overline Y$. Therefore, we need only show that if $\alpha,\alpha'\in\Acyc(\overline Y)$ satisfy $\alpha\approx\alpha'$, then there exist $\tau\in\mathcal L(\alpha)$ and $\tau'\in\mathcal L(\alpha')$ such that $\tau$ and $\tau'$ are in the same connected component of $\FS(\mathsf{Cycle}_n,Y)$. To prove this, it suffices to consider the case in which $\alpha'$ is obtained from $\alpha$ by performing a double
flip. Thus, let us assume that there exist a source $u$ of $\alpha$ and a sink $v$ of $\alpha$ (with $\{u,v\}\not\in E(\overline Y)$) such that $\alpha'$ is obtained from $\alpha$ by flipping $u$ into a sink and flipping $v$ into a source. It is straightforward to see that there is a linear extension $\tau$ of $\alpha$ such that $\tau(1)=u$ and $\tau(n)=v$. The vertices $u$ and $v$ are adjacent in $Y$ (since they are not adjacent in $\overline Y$), so the operation that changes $\tau$ into $\tau':=\tau\circ(1\,n)$ is a $(\mathsf{Cycle}_n,Y)$-friendly swap. Observe that $\tau'$ is a linear extension of $\alpha'$. Since $\tau$ and $\tau'$ are adjacent in $\FS(\mathsf{Cycle}_n,Y)$, they are certainly in the same connected component.   
\end{proof}

\begin{corollary}\label{CorApprox}
Let $G$ be a graph on $n$ vertices. Given $\alpha\in\Acyc(G)$, let $\vec{\alpha}$ be an acyclic orientation obtained from $\alpha$ by flipping a source into a sink. The equivalence class $[\vec{\alpha}]_\approx$ depends on only the equivalence class $[\alpha]_\approx$, not the specific representative $\alpha$ or the source in $\alpha$ that is flipped to obtain $\vec{\alpha}$. Therefore, the map $\Phi\colon \Acyc(G)/\!\!\approx\,\to\Acyc(G)/\!\!\approx$ defined by $\Phi([\alpha]_\approx)=[\vec{\alpha}]_\approx$ is well-defined.   
\end{corollary}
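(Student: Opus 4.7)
The plan is to deduce the corollary from Theorem~\ref{Thm2} by exploiting the automorphism $\varphi^*$ of the friends-and-strangers graph. Apply Theorem~\ref{Thm2} with $Y = \overline{G}$, so that $\Acyc(\overline{Y}) = \Acyc(G)$; this yields a bijection between $\Acyc(G)/\!\!\approx$ and the set of connected components of $\FS(\mathsf{Cycle}_n, \overline{G})$, sending $[\alpha]_\approx$ to the component $H_{[\alpha]_\approx}$ whose vertex set is $\mathcal L([\alpha]_\approx)$. Since $\varphi$ is an automorphism of $\mathsf{Cycle}_n$, Proposition~\ref{Prop3} ensures that $\varphi^*$ is an automorphism of $\FS(\mathsf{Cycle}_n, \overline{G})$, and so permutes its connected components.

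The key link between source-flips and $\varphi^*$ is the observation made just before the statement of the corollary: for any $\sigma \in \mathfrak S_n$, the orientation $\alpha_G(\varphi^*(\sigma))$ is obtained from $\alpha_G(\sigma)$ by flipping the source $\sigma(1)$ into a sink. Moreover, given $\alpha \in \Acyc(G)$ and any source $u$ of $\alpha$, the vertex $u$ is a minimal element of $([n], \leq_\alpha)$, so there exists a linear extension $\sigma$ of $\alpha$ with $\sigma(1) = u$; for such a $\sigma$, the permutation $\varphi^*(\sigma)$ is a linear extension of $\vec{\alpha}$.

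Now suppose $\alpha \approx \alpha'$, and let $\vec{\alpha}$ and $\vec{\alpha'}$ be obtained by flipping (possibly different) sources $u$ of $\alpha$ and $u'$ of $\alpha'$, respectively. Choose linear extensions $\sigma \in \mathcal L(\alpha)$ and $\sigma' \in \mathcal L(\alpha')$ with $\sigma(1) = u$ and $\sigma'(1) = u'$. Since $\sigma, \sigma' \in \mathcal L([\alpha]_\approx)$, Theorem~\ref{Thm2} shows that $\sigma$ and $\sigma'$ lie in the common component $H_{[\alpha]_\approx}$; applying the automorphism $\varphi^*$, the permutations $\varphi^*(\sigma)$ and $\varphi^*(\sigma')$ lie in the common component $\varphi^*(H_{[\alpha]_\approx})$. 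But $\varphi^*(\sigma) \in \mathcal L(\vec{\alpha}) \subseteq \mathcal L([\vec{\alpha}]_\approx)$ and $\varphi^*(\sigma') \in \mathcal L(\vec{\alpha'}) \subseteq \mathcal L([\vec{\alpha'}]_\approx)$, so by Theorem~\ref{Thm2} we must have $H_{[\vec{\alpha}]_\approx} = H_{[\vec{\alpha'}]_\approx}$ and therefore $[\vec{\alpha}]_\approx = [\vec{\alpha'}]_\approx$. The main subtlety is to identify the abstract operation of flipping a chosen source with the concrete action of $\varphi^*$ on a suitably chosen linear extension; once that is in hand, the automorphism property of $\varphi^*$ together with Theorem~\ref{Thm2} finish the argument.
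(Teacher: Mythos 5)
Your proposal is correct and follows essentially the same route as the paper's proof: choose linear extensions of $\alpha$ and $\alpha'$ beginning at the flipped sources, use Theorem~\ref{Thm2} to place them in a common connected component of $\FS(\mathsf{Cycle}_n,\overline G)$, apply the automorphism $\varphi^*$, and invoke Theorem~\ref{Thm2} again to conclude $[\vec{\alpha}]_\approx=[\vec{\alpha}']_\approx$. The only difference is that you spell out a bit more explicitly the facts (already noted in the paper) that a source admits a linear extension starting at it and that $\varphi^*$ realizes the source-to-sink flip.
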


\begin{proof}
Let $Y$ be a graph with vertex set $[n]$ such that $\overline Y$ is isomorphic to $G$. 
Choose $\alpha,\alpha'\in\Acyc(\overline Y)$ such that $\alpha\approx\alpha'$. Let $u$ and $u'$ be sources of $\alpha$ and $\alpha'$, respectively. Let $\vec\alpha$ (respectively, $\vec\alpha'$) be the acyclic orientation obtained from $\alpha$ (respectively, $\alpha'$) by flipping $u$ (respectively, $u'$) into a sink.
There exist $\sigma\in\mathcal L(\alpha)$ and $\sigma'\in\mathcal L(\alpha')$ such that $\sigma(1)=u$ and $\sigma'(1)=u'$. Because $\alpha\approx\alpha'$, it follows from Theorem~\ref{Thm2} that $\sigma$ and $\sigma'$ are in the same connected component of $\FS(\mathsf{Cycle}_n,Y)$. Because $\varphi^*$ is an automorphism of $\FS(\mathsf{Cycle}_n,Y)$, the permutations $\varphi^*(\sigma)$ and $\varphi^*(\sigma')$ are in the same connected component of $\FS(\mathsf{Cycle}_n,Y)$. We now check that $\varphi^*(\sigma)\in\mathcal L(\vec{\alpha})$ and $\varphi^*(\sigma')\in\mathcal L(\vec{\alpha}')$. Using Theorem~\ref{Thm2} once again, we find that $\vec{\alpha}\approx\vec{\alpha}'$. This proves that the map $\Phi$ is well-defined. 
\end{proof}

\begin{example}
Figures~\ref{Fig2} and \ref{Fig3} show toric acyclic orientations in red boxes and double-flip equivalence classes in blue boxes. In each case, the double-flip equivalence classes within each toric acyclic orientation are cyclically ordered from left to right. The map $\Phi$ sends each double-flip equivalence class to the next double-flip equivalence class in this cyclic order. 
\end{example}

Theorem~\ref{Thm2} describes the connected components of $\FS(\mathsf{Cycle}_n,Y)$, but it is possible to say even more. Namely, we will obtain a description of these connected components that relies on only the equivalence relation $\sim$, not the double-flip equivalence relation $\approx$. We first handle the case in which $\overline Y$ is connected, where we will see that each toric acyclic orientation of $\overline Y$ corresponds to a union of $n$ pairwise isomorphic connected components of $\FS(\mathsf{Cycle}_n,Y)$. We will then use this result to understand the connected components of $\FS(\mathsf{Cycle}_n,Y)$ when $\overline Y$ is not necessarily connected. 

\begin{proposition}\label{PropConnected}
Let $Y$ be a graph on the vertex set $[n]$ such that $\overline{Y}$ is connected. For each toric acyclic orientation $[\alpha]_{\sim}\in\Acyc(\overline Y)/\!\!\sim$, choose a linear extension $\sigma_{[\alpha]_{\sim}}$ of $[\alpha]_{\sim}$, and let $J_{[\alpha]_{\sim}}$ be the connected component of $\FS(\mathsf{Cycle}_n,Y)$ containing $\sigma_{[\alpha]_{\sim}}$. The graphs \[J_{[\alpha]_{\sim}},\varphi^*(J_{[\alpha]_{\sim}}),\ldots,(\varphi^*)^{n-1}(J_{[\alpha]_{\sim}})\] are distinct, pairwise isomorphic connected components of $\FS(\mathsf{Cycle}_n,Y)$. Moreover, 
\[\FS(\mathsf{Cycle}_n,Y)=\bigoplus_{[\alpha]_{\sim}\in\Acyc(\overline Y)/\sim}\bigoplus_{k=0}^{n-1}(\varphi^*)^k(J_{[\alpha]_{\sim}}).\]
\end{proposition}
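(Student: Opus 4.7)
The plan is to combine Theorem~\ref{Thm2} (the $\approx$-class description of connected components) with the cyclic-shift automorphism to reduce the proposition to showing that the map $\Phi$ of Corollary~\ref{CorApprox} has order exactly $n$ on every $\sim$-equivalence class when $\overline{Y}$ is connected.

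I would start by invoking Proposition~\ref{Prop3} with the cyclic automorphism $\varphi$ of $\mathsf{Cycle}_n$ to conclude that $\varphi^*$ is a graph automorphism of $\FS(\mathsf{Cycle}_n,Y)$, so each $(\varphi^*)^k(J_{[\alpha]_\sim})$ is a connected component, and all of them are pairwise isomorphic. Next, I would translate the $\varphi^*$-action on components into the $\Phi$-action on $\approx$-classes: the discussion preceding Theorem~\ref{Thm2} says that if $\sigma$ is a linear extension of some $\widehat\alpha\in[\alpha]_\sim$, then $(\varphi^*)(\sigma)$ is a linear extension of the orientation obtained from $\widehat\alpha$ by flipping the source $\sigma(1)$ into a sink, so by Theorem~\ref{Thm2} and Corollary~\ref{CorApprox} the orbit $\{(\varphi^*)^k(J_{[\alpha]_\sim})\}_{k\in\mathbb{Z}}$ corresponds to the $\Phi$-orbit of the $\approx$-class defining $J_{[\alpha]_\sim}$. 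Since each single flip is either a source-to-sink flip (corresponding to $\Phi$) or a sink-to-source flip (corresponding to $\Phi^{-1}$), and $\sim$ is generated by such flips, this $\Phi$-orbit exhausts every $\approx$-class inside $[\alpha]_\sim$, which yields the claimed decomposition provided the orbit has full size $n$.

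The main obstacle is proving this size-$n$ claim: that $\Phi^k([\alpha_0]_\approx)\neq[\alpha_0]_\approx$ for $1\le k\le n-1$ when $\overline{Y}$ is connected. By Theorem~\ref{Thm2} this is equivalent to showing no $\sigma$ is in the same connected component of $\FS(\mathsf{Cycle}_n,Y)$ as $\sigma\circ\varphi^k$ for such $k$. My approach is a winding-number argument: for any hypothetical sequence of friendly swaps transforming $\sigma$ into $\sigma\circ\varphi^k$, lift each label's cyclic position to an integer sequence that changes by $\pm 1$ at each swap (with wrap swaps across $\{n,1\}$ contributing $+1$ to the lift of the label moving from $n$ to $1$ and $-1$ to the lift of the label moving from $1$ to $n$). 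Each label $\ell$ then acquires a winding number $w_\ell\in\mathbb{Z}$ with $\sum_\ell w_\ell=k$. Because two labels that are non-friends (adjacent in $\overline{Y}$) can never swap directly, their integer lifts can never cross, which yields comparisons of $w_{\ell_1}$ and $w_{\ell_2}$ along every edge of $\overline{Y}$. I would then try to use the connectedness of $\overline{Y}$ to chain these comparisons and force $k\equiv 0\pmod n$. The hard part will be that the non-crossing argument yields only one-sided inequalities from a single instance, so honestly forcing the divisibility may require a more refined tool---for instance, running the argument on both the original sequence and its time-reversal in tandem, or exhibiting a $\mathbb{Z}/n\mathbb{Z}$-valued statistic on $\Acyc(\overline{Y})$ that is invariant under double flips while increasing by $1$ under each source-to-sink flip, which would pin down the orbit size directly.
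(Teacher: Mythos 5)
Your plan follows essentially the same route as the paper's proof: Proposition~\ref{Prop3} gives the automorphism $\varphi^*$, hence the pairwise isomorphic components; Theorem~\ref{Thm2} together with the flip/$\varphi^*$ correspondence (the map $\Phi$ of Corollary~\ref{CorApprox}) shows that the translates $(\varphi^*)^k(J_{[\alpha]_\sim})$ cover all of $\mathcal L([\alpha]_\sim)$ --- the paper packages this as ``move-equivalence,'' using $(\varphi^*)^{-1}=(\varphi^*)^{n-1}$ to make the relation symmetric, which is also the small observation you need in order to legitimately treat sink-to-source flips as $\Phi^{-1}$ (equivalently, $\Phi^n=\mathrm{id}$ since $\varphi^n=\mathrm{id}$) --- and distinctness of the $n$ translates is proved by exactly the displacement statistic you propose.

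The step you flag as the ``hard part'' does go through, and you need neither time-reversal nor a new $\mathbb Z/n\mathbb Z$-valued invariant: the non-crossing condition yields equalities, not one-sided inequalities. Track lifted positions $L_\ell(t)\in\mathbb Z$ as you describe, and for labels $\ell_1,\ell_2$ adjacent in $\overline Y$ consider $D(t)=L_{\ell_1}(t)-L_{\ell_2}(t)$. Since the actual positions are distinct modulo $n$, $D(t)$ is never a multiple of $n$; since $\ell_1$ and $\ell_2$ never swap with each other, each friendly swap moves at most one of them, so $D$ changes by at most $1$ per step and therefore remains strictly between the same two consecutive multiples of $n$ throughout the process. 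Hence the net displacements satisfy $|d_{\ell_1}-d_{\ell_2}|<n$; but $d_{\ell_1}\equiv d_{\ell_2}\pmod n$ (each is congruent to the common shift $k$), so $d_{\ell_1}=d_{\ell_2}$, i.e.\ $w_{\ell_1}=w_{\ell_2}$. Connectedness of $\overline Y$ then forces all $w_\ell$ to be equal to some $w$, and $\sum_\ell w_\ell=k$ gives $nw=k$, which is impossible for $1\le k\le n-1$. The paper's own bookkeeping is a variant of the same idea: it splits the labels into the sets $A^+,A^-$ of positive and negative net displacement (no displacement is $0$ because $k\not\equiv 0\pmod n$), notes that a label of $A^+$ and a label of $A^-$ have displacements differing by at least $n$ and hence must have swapped directly at some point, and concludes that every vertex of $A^+$ is adjacent in $Y$ to every vertex of $A^-$ with $A^+\cup A^-=V(Y)$, contradicting the connectedness of $\overline Y$.
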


\begin{proof}
The claim that the graphs $J_{[\alpha]_{\sim}},\varphi^*(J_{[\alpha]_{\sim}}),\ldots,(\varphi^*)^{n-1}(J_{[\alpha]_{\sim}})$ are pairwise isomorphic connected components of $\FS(\mathsf{Cycle}_n,Y)$ follows immediately from the fact that $\varphi^*$ is an automorphism of $\FS(\mathsf{Cycle}_n,Y)$ (by Proposition~\ref{Prop3}). Each equivalence class in $\Acyc(\overline Y)/\!\!\sim$ is a union of equivalence classes in $\Acyc(\overline Y)/\!\!\approx$, so it follows from Theorem~\ref{Thm2} that \[\FS(\mathsf{Cycle}_n,Y)=\bigoplus_{[\alpha]_{\sim}\in\Acyc(\overline Y)/\sim}\FS(\mathsf{Cycle}_n,Y)\vert_{\mathcal L([\alpha]_\sim)}.\] Let us fix a toric acyclic orientation $[\alpha]_\sim\in\Acyc(\overline Y)/\!\!\sim$ and a linear extension $\sigma_{[\alpha]_\sim}\in\mathcal L([\alpha]_\sim)$. Let $U=\bigcup_{k=0}^{n-1}V((\varphi^*)^k(J_{[\alpha]_\sim}))$. We have seen that applying $\varphi^*$ to a linear extension of an acyclic orientation $\beta\in\Acyc(\overline Y)$ produces a new permutation whose corresponding acyclic orientation is obtained from $\beta$ by applying a flip; it follows that $(\varphi^*)^k(\sigma_{[\alpha]_\sim})$ is a linear extension of $[\alpha]_\sim$ for every $0\leq k\leq n-1$. In particular, $(\varphi^*)^k(\sigma_{[\alpha]_\sim})\in V((\varphi^*)^k(J_{[\alpha]_\sim}))\cap\mathcal L([\alpha]_\sim)$ for every $0\leq k\leq n-1$. Because $\FS(\mathsf{Cycle}_n,Y)\vert_{\mathcal L([\alpha]_\sim)}$ is a union of connected components of $\FS(\mathsf{Cycle}_n,Y)$, we must have $U\subseteq\mathcal L([\alpha]_\sim)$. We need to show that the reverse inclusion $\mathcal L([\alpha]_\sim)\subseteq U$ holds and that the subgraphs $J_{[\alpha]_{\sim}},\varphi^*(J_{[\alpha]_{\sim}}),\ldots,(\varphi^*)^{n-1}(J_{[\alpha]_{\sim}})$ are distinct.

Define a \dfn{move} to be an operation that changes a permutation $\tau$ into a permutation $\tau'$ such that either $\{\tau,\tau'\}\in E(\FS(\mathsf{Cycle}_n,Y))$ or $\tau'=\varphi^*(\tau)$. Say two permutations are \dfn{move-equivalent} if there is a sequence of moves transforming the first permutation into the second. Using the fact that $(\varphi^*)^{-1}=(\varphi^*)^{n-1}$, we see that move-equivalence is a genuine equivalence relation. Applying a move to an element of $U$ produces another element of $U$. Therefore, in order to prove that $\mathcal L([\alpha]_\sim)\subseteq U$, it suffices to show that any two permutations in $\mathcal L([\alpha]_\sim)$ are move-equivalent. If two permutations in $\mathcal L([\alpha]_\sim)$ are linear extensions of the same acyclic orientation of $\overline Y$, then they are certainly move-equivalent because they are in the same connected component of $\FS(\mathsf{Cycle}_n,Y)$ by Theorem~\ref{Thm2}. Therefore, it suffices to show that for any two acyclic orientations $\alpha',\alpha''\in[\alpha]_\sim$, there exist linear extensions $\sigma'\in\mathcal L(\alpha')$ and $\sigma''\in\mathcal L(\alpha'')$ that are move-equivalent. In order to prove this, it suffices to prove it in the case where $\alpha''$ is obtained from $\alpha'$ via a flip. Without loss of generality, we may assume $\alpha''$ is obtained from $\alpha'$ by flipping a source $u$ into a sink (otherwise, switch the roles of $\alpha'$ and $\alpha''$). It is straightforward to see that there exists a linear extension $\sigma'$ of $\alpha'$ such that $\sigma'(1)=u$. The permutation $\sigma''=\varphi^*(\sigma')$ is a linear extension of $\alpha''$. By definition, $\sigma'$ and $\sigma''$ are move-equivalent. 

It remains to prove that $J_{[\alpha]_{\sim}},\varphi^*(J_{[\alpha]_{\sim}}),\ldots,(\varphi^*)^{n-1}(J_{[\alpha]_{\sim}})$ are distinct. This will follow if we can show that for every $\sigma\in\mathfrak S_n$ and every $k\in[n-1]$, the vertices $\sigma$ and $(\varphi^*)^k(\sigma)$ lie in different connected components of $\FS(\mathsf{Cycle}_n,Y)$. To prove this, it is helpful to imagine the cycle graph as lying in the plane with the vertices $1,\ldots,n$ listed clockwise in this order. As in the introduction, we imagine $n$ people labeled with the numbers $1,\ldots,n$ standing on the vertices of thecycle graph, with the person labeled $\sigma(i)$ on the vertex $i$. Two people are friends with each other if and only if their labels are adjacent in $Y$, and two friends can swap places with each other whenever they are at adjacent positions in the cycle. 

Now suppose by way of contradiction that there is a sequence of $(\mathsf{Cycle}_n,Y)$-friendly swaps that changes the configuration of people given by $\sigma$ to the configuration given by $(\varphi^*)^k(\sigma)$. The overall effect of this transformation is that each person moves $k$ spaces counterclockwise. Imagine assigning a weight to each person so that a person's weight increases by $1$ every time they move one space counterclockwise and decreases by $1$ every time they move one space clockwise. Assume each person's weight starts at $0$. After the sequence of $(\mathsf{Cycle}_n,Y)$-friendly swaps that has the effect of moving each person $k$ spaces counterclockwise, each person's weight will be congruent to $k$ modulo $n$. Since each swap increases one person's weight by $1$ and decreases another person's weight by $1$, the total weight of all people is always $0$. Therefore, at least one person must have a positive weight in the end, and at least one person must have a negative weight in the end. Observe that no person has weight $0$ at the end because $k\not\equiv 0\pmod n$. Let $A^+$ be the (nonempty) set of people whose weights are positive in the end, and let $A^-$ be the (nonempty) set of people whose weights are negative in the end. It is clear that if $p^+\in A^+$ and $p^-\in A^-$, then at some point during the sequence of swaps, the person $p^+$ must have swapped places with $p^-$. This means that every person in $A^+$ is friends with every person in $A^-$. Using the labels to identify the people with vertices of $Y$, we see that every person in $A^+$ is adjacent in $Y$ to every person in $A^-$. Since $A^+\cup A^-=V(Y)$, this contradicts the assumption that $\overline Y$ is connected. 
\end{proof} 

\begin{example}
Suppose $Y=\begin{array}{l}\includegraphics[height=.7cm]{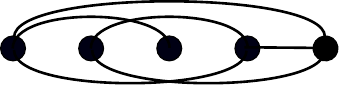}\end{array}$ so that $\overline Y$ is the graph $\begin{array}{l}\includegraphics[height=.4cm]{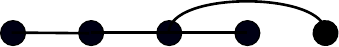},\end{array}$ whose acyclic orientations are depicted in Figure~\ref{Fig3}. Identify the vertices of $Y$, read from left to right, with the numbers $1,2,3,4,5$. Theorem~\ref{Thm2} tells us that the connected components of $\FS(\mathsf{Cycle}_5,Y)$ correspond to the double-flip equivalence classes of $\overline Y$ (the blue boxes in Figure~\ref{Fig3}). There is only $1$ toric acyclic orientation $[\alpha]_\sim$ in $\Acyc(\overline Y)/\!\!\sim$, so we can choose $\sigma_{[\alpha]_\sim}=12345$. In this case, $J_{[\alpha]_\sim}$ is the connected component of $\FS(\mathsf{Cycle}_5,Y)$ corresponding to the leftmost double-flip equivalence class in Figure~\ref{Fig3}. The set of vertices in $J_{[\alpha]_\sim}$ is the set of linear extensions of that double-flip equivalence class, which is 
\begin{equation}\label{Eq4}
\{12354,12345,52341,25341,52314,25314,52134,25134, 21534,54312,45312,41532,
\end{equation}
\[\,\,\,\,\,\,54132,14532, 45132, 51432, 15432, 24351, 42315, 24315, 42135, 24135, 21435, 42351\}.\]
The second double-flip equivalence class from the left, which is obtained by applying the map $\Phi$ from Corollary~\ref{CorApprox} to the leftmost double-flip equivalence class, also has $24$ linear extensions; these are precisely the permutations obtained by applying $\varphi^*$ to the permutations in \eqref{Eq4}. In general, applying $\Phi$ to a double-flip equivalence class $[\beta]_\approx$ yields the next double-flip equivalence class after $[\beta]_\approx$ in the left-to-right cyclic order; the linear extensions of $\Phi([\beta]_{\approx})$ are obtained by applying $\varphi^*$ to the linear extensions of $[\beta]_\approx$.  
\end{example}

The following corollary tells us about an interesting relationship between the sets $\Acyc(G)/\!\!\sim$ and $\Acyc(G)/\!\!\approx$ for an arbitrary graph $G$. It is worth noting that the proof of this fact, which is not obvious \emph{a priori}, passes through the analysis of the graph $\FS(\mathsf{Cycle}_n,Y)$ with $Y=\overline G$. 

\begin{corollary}\label{CorPhiOrbits}
Let $G$ be a connected graph on $n$ vertices, and let $\Phi\colon \Acyc(G)/\!\!\approx\,\to\Acyc(G)/\!\!\approx$ be the map from Corollary~\ref{CorApprox}. For each $\alpha\in\Acyc(G)$, the equivalence class $[\alpha]_\sim$ is the disjoint union \[[\alpha]_\sim=\bigsqcup_{k=0}^{n-1}\Phi^k([\alpha]_\approx).\]  
\end{corollary}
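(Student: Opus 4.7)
The plan is to deduce Corollary~\ref{CorPhiOrbits} directly from Proposition~\ref{PropConnected} and Theorem~\ref{Thm2}, by transporting everything to the setting of $\FS(\mathsf{Cycle}_n,Y)$ with $Y=\overline{G}$. Without loss of generality, assume $V(G)=[n]$ and set $Y=\overline{G}$, so that $G=\overline{Y}$ and Proposition~\ref{PropConnected} applies. Fix $\alpha\in\Acyc(G)$ and pick any linear extension $\sigma\in\mathcal L(\alpha)$; then $\sigma$ is in particular a linear extension of the toric class $[\alpha]_{\sim}$, so we may take $\sigma_{[\alpha]_\sim}=\sigma$ and let $J=J_{[\alpha]_\sim}$ be the connected component of $\FS(\mathsf{Cycle}_n,Y)$ containing $\sigma$. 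By Theorem~\ref{Thm2}, $V(J)=\mathcal L([\alpha]_\approx)$.

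The next step is to identify, for each $k$, the $\approx$-class corresponding to the connected component $(\varphi^*)^k(J)$. Recall from the discussion preceding Theorem~\ref{Thm2} that for any permutation $\tau$, the acyclic orientation $\alpha_{\overline{Y}}(\varphi^*(\tau))$ is obtained from $\alpha_{\overline{Y}}(\tau)$ by flipping the source $\tau(1)$ into a sink. Applying this iteratively starting from $\sigma$, the permutation $(\varphi^*)^k(\sigma)$ lies in $\mathcal L(\beta_k)$ for some $\beta_k\in\Acyc(G)$ obtained from $\alpha$ by $k$ successive source-to-sink flips, and by the definition of $\Phi$ in Corollary~\ref{CorApprox} we have $[\beta_k]_\approx=\Phi^k([\alpha]_\approx)$. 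Since $(\varphi^*)^k(J)$ is a connected component of $\FS(\mathsf{Cycle}_n,Y)$ containing $(\varphi^*)^k(\sigma)$, Theorem~\ref{Thm2} gives $V((\varphi^*)^k(J))=\mathcal L(\Phi^k([\alpha]_\approx))$.

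Now Proposition~\ref{PropConnected} does two things simultaneously. First, it asserts that the components $J,\varphi^*(J),\ldots,(\varphi^*)^{n-1}(J)$ are pairwise distinct; combined with the bijective correspondence between connected components and $\approx$-classes from Theorem~\ref{Thm2}, this immediately yields that $[\alpha]_\approx, \Phi([\alpha]_\approx),\ldots,\Phi^{n-1}([\alpha]_\approx)$ are pairwise distinct, hence pairwise disjoint as subsets of $\Acyc(G)$. Second, Proposition~\ref{PropConnected} says that
\[
\bigsqcup_{k=0}^{n-1}V((\varphi^*)^k(J))=\mathcal L([\alpha]_\sim),
\]
so by Theorem~\ref{Thm2} every $\approx$-class contained in $[\alpha]_\sim$ appears among $\Phi^k([\alpha]_\approx)$ for some $0\le k\le n-1$. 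Since every $\sim$-class is a union of $\approx$-classes, this gives the reverse inclusion $[\alpha]_\sim\subseteq\bigcup_{k=0}^{n-1}\Phi^k([\alpha]_\approx)$, while the inclusion $\bigcup_{k=0}^{n-1}\Phi^k([\alpha]_\approx)\subseteq[\alpha]_\sim$ is immediate from the fact that each flip preserves the $\sim$-class.

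The main conceptual work has already been done in Proposition~\ref{PropConnected}, whose proof contained the nontrivial ``weight'' argument on cyclic configurations that forces $J,\varphi^*(J),\ldots,(\varphi^*)^{n-1}(J)$ to be distinct; once that is in hand, the only remaining point is the routine translation between the automorphism $\varphi^*$ on components of $\FS(\mathsf{Cycle}_n,Y)$ and the map $\Phi$ on $\approx$-classes of $\overline{Y}$, which is exactly what the preceding paragraphs accomplish. So the expected obstacle here is essentially bookkeeping rather than new ideas: one has to be careful that picking a single representative $\sigma\in\mathcal L(\alpha)$ (rather than an arbitrary linear extension of $[\alpha]_\sim$) is enough to pin down that $J$ corresponds to $[\alpha]_\approx$ itself, and not merely to some other $\approx$-class inside $[\alpha]_\sim$.
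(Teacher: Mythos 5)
Your proposal is correct and follows essentially the same route as the paper: both set $Y=\overline{G}$, use the correspondence (from the proof of Corollary~\ref{CorApprox}) between applying $\Phi$ to $\approx$-classes and applying $\varphi^*$ to linear extensions, choose $\sigma_{[\alpha]_\sim}$ to be a linear extension of $\alpha$ itself so that $V(J_{[\alpha]_\sim})=\mathcal L([\alpha]_\approx)$, and then read the statement off from the decomposition $\FS(\mathsf{Cycle}_n,Y)\vert_{\mathcal L([\alpha]_\sim)}=\bigoplus_{k=0}^{n-1}(\varphi^*)^k(J_{[\alpha]_\sim})$ in Proposition~\ref{PropConnected}. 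Your write-up simply makes explicit the bookkeeping (via Theorem~\ref{Thm2} and the uniqueness of $\alpha_G(\tau)$) that the paper leaves implicit.
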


\begin{proof}
Let $Y=\overline G$. We saw in the proof of Corollary~\ref{CorApprox} that applying $\Phi$ to an equivalence class $[\beta]_\approx$ corresponds to applying $\varphi^*$ to the linear extensions of $\beta$. Preserving the notation from Proposition~\ref{PropConnected}, we can choose $\sigma_{[\alpha]_\sim}$ to be a linear extension of $\alpha$ so that $[\alpha]_\approx$ is the set of vertices of $J_{[\alpha]_\sim}$. Therefore, the desired result is equivalent to the part of Proposition~\ref{PropConnected} stating that \[\FS(\mathsf{Cycle}_n,Y)\vert_{\mathcal L([\alpha]_\sim)}=\bigoplus_{k=0}^{n-1}(\varphi^*)^k(J_{[\alpha]_\sim}). \qedhere\] 
\end{proof}

We can now proceed to describe the connected components of $\FS(\mathsf{Cycle}_n,Y)$ without assuming that $\overline Y$ is connected. The reader may find it helpful to refer to Example~\ref{Exam1} and Figure~\ref{Fig4} while reading the following proof.   

\begin{theorem}\label{Thm5}
Let $Y$ be a graph on the vertex set $[n]$.  Let $n_1, \ldots, n_r$ denote the sizes of the connected components of $\overline{Y}$, and let $\nu=\gcd(n_1, \ldots, n_r)$.   For each toric acyclic orientation $[\alpha]_{\sim}\in\Acyc(\overline Y)/\!\!\sim$, choose a linear extension $\sigma_{[\alpha]_{\sim}}$ of $[\alpha]_{\sim}$, and let $J_{[\alpha]_{\sim}}$ be the connected component of $\FS(\mathsf{Cycle}_n,Y)$ containing $\sigma_{[\alpha]_{\sim}}$. The graphs $J_{[\alpha]_{\sim}},\varphi^*(J_{[\alpha]_{\sim}}),\ldots,(\varphi^*)^{\nu-1}(J_{[\alpha]_{\sim}})$ are distinct, pairwise isomorphic connected components of $\FS(\mathsf{Cycle}_n,Y)$. Moreover, 
\[\FS(\mathsf{Cycle}_n,Y)=\bigoplus_{[\alpha]_{\sim}\in\Acyc(\overline Y)/\sim}\bigoplus_{k=0}^{\nu-1}(\varphi^*)^k(J_{[\alpha]_{\sim}}).\]
\end{theorem}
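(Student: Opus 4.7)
The plan is to combine Theorem~\ref{Thm2} and Corollary~\ref{CorApprox} with a careful description of the double-flip equivalence $\approx$ on $\Acyc(\overline Y)$ obtained by decomposing $\overline Y$ into connected components and applying Corollary~\ref{CorPhiOrbits} to each piece. Let $\overline Y_1,\ldots,\overline Y_r$ be the connected components of $\overline Y$, with $C_i:=V(\overline Y_i)$ of size $n_i$. Since $\overline Y$ has no edges between distinct $C_i$'s, every $\alpha\in\Acyc(\overline Y)$ factors uniquely as $\alpha=\bigsqcup_i\alpha|_{C_i}$, and every flip occurs within a single $C_i$; consequently $\Acyc(\overline Y)/{\sim}=\prod_i\Acyc(\overline Y_i)/{\sim_i}$. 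Fix a toric class $[\alpha]_\sim$. Corollary~\ref{CorPhiOrbits}, applied to the connected graph $\overline Y_i$, parametrizes the $\approx_i$-classes within $[\alpha|_{C_i}]_{\sim_i}$ by $\mathbb Z/n_i\mathbb Z$ via iterated applications of the source-to-sink map $\Phi_i$. Assembling these parametrizations component-by-component, each $\beta\in[\alpha]_\sim$ acquires a tuple of indices $(a_1(\beta),\ldots,a_r(\beta))\in\prod_i\mathbb Z/n_i\mathbb Z$ depending only on $([\beta|_{C_i}]_{\approx_i})_i$.

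I would then show that the $\approx$-equivalence classes of $[\alpha]_\sim$ are precisely the cosets of the subgroup $H:=\langle e_i-e_j:i\neq j\rangle\subseteq\prod_i\mathbb Z/n_i\mathbb Z$. Every double flip of $\alpha$ is of one of two types. An \emph{intra-team} double flip has source and sink in a common $C_i$ and is therefore a double flip of $\alpha|_{C_i}$; it preserves each $[\beta|_{C_i}]_{\approx_i}$ and so leaves the tuple unchanged. A \emph{cross-team} double flip has source in $C_i$ and sink in $C_j$ with $i\neq j$; it is automatically permitted because distinct components of $\overline Y$ share no edges. Such a flip performs a source-to-sink flip on $\alpha|_{C_i}$ and a sink-to-source flip on $\alpha|_{C_j}$, so it translates the tuple by $e_i-e_j$. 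A short B\'ezout argument using $\nu=\gcd(n_1,\ldots,n_r)$ identifies $H$ with $\{(d_1,\ldots,d_r):\sum_id_i\equiv 0\pmod\nu\}$, whose index in $\prod_i\mathbb Z/n_i\mathbb Z$ is exactly $\nu$; hence $[\alpha]_\sim$ contains exactly $\nu$ double-flip classes.

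To conclude, by Theorem~\ref{Thm2} these $\nu$ double-flip classes correspond to $\nu$ connected components of $\FS(\mathsf{Cycle}_n,Y)$ supported on $\mathcal L([\alpha]_\sim)$. By Proposition~\ref{Prop3}, $\varphi^*$ is an automorphism of $\FS(\mathsf{Cycle}_n,Y)$, and by Corollary~\ref{CorApprox} (applied exactly as in the proof of Proposition~\ref{PropConnected}) its induced action on these components corresponds, on the quotient $(\prod_i\mathbb Z/n_i\mathbb Z)/H$, to adding $e_j+H$ for any $j$; equivalently, it is the successor map on the well-defined residue $\sum_ia_i\pmod\nu$. This action is cyclic and transitive of order $\nu$, which establishes both that the graphs $J_{[\alpha]_\sim},\varphi^*(J_{[\alpha]_\sim}),\ldots,(\varphi^*)^{\nu-1}(J_{[\alpha]_\sim})$ are distinct and that they exhaust the components of $\FS(\mathsf{Cycle}_n,Y)$ above $[\alpha]_\sim$; pairwise isomorphism follows from $\varphi^*$ being an automorphism. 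Summing over all toric classes $[\alpha]_\sim\in\Acyc(\overline Y)/{\sim}$ yields the stated decomposition. The principal obstacle will be rigorously pinning down the identification of $\approx$ with the coset structure of $H$: one must carefully track how intra- and cross-team double flips act on the $\Phi_i$-parametrization and verify that no additional identifications arise, so that $H$ itself (and not a larger subgroup) governs the $\approx$-equivalence.
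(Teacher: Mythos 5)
Your proposal is correct and follows essentially the same route as the paper's proof: decompose $\overline Y$ into connected components, use Corollary~\ref{CorPhiOrbits} to parametrize the per-component double-flip classes by $\mathbb Z/n_i\mathbb Z$ (the paper packages this via its ``local-double-flip'' equivalence and the group $\Gamma=\prod_i\mathbb Z/n_i\mathbb Z$), observe that cross-component double flips act by $e_i-e_j$ while intra-component ones act trivially, compute via B\'ezout that the subgroup they generate (your $H$, the paper's $\Delta$) has index $\nu$, and then transfer back through Theorem~\ref{Thm2}, Corollary~\ref{CorApprox}, and the automorphism $\varphi^*$. The step you flag as the principal obstacle is handled in the paper exactly as you outline, so no genuinely different ideas are involved.
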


\begin{proof}
As in the proof of Proposition~\ref{PropConnected}, the assertions that $J_{[\alpha]_{\sim}},\varphi^*(J_{[\alpha]_{\sim}}),\ldots,(\varphi^*)^{\nu-1}(J_{[\alpha]_{\sim}})$ are pairwise isomorphic connected components of $\FS(\mathsf{Cycle}_n,Y)$ and that
\[\FS(\mathsf{Cycle}_n,Y)=\bigoplus_{[\alpha]_{\sim}\in\Acyc(\overline Y)/\sim}\FS(\mathsf{Cycle}_n,Y)\vert_{\mathcal L([\alpha]_\sim)}\] are immediate. Let us fix a toric acyclic orientation $[\alpha]_\sim\in\Acyc(\overline Y)/\!\!\sim$ and a linear extension $\sigma_{[\alpha]_\sim}\in\mathcal L([\alpha]_\sim)$. We may assume that $\sigma_{[\alpha]_\sim}$ is actually a linear extension of the specific acyclic orientation $\alpha$. The same argument used in the proof of Proposition~\ref{PropConnected} shows that the vertex sets of $J_{[\alpha]_{\sim}},\varphi^*(J_{[\alpha]_{\sim}}),\ldots,(\varphi^*)^{\nu-1}(J_{[\alpha]_{\sim}})$ are all contained in $\mathcal L([\alpha]_\sim)$. We need to show that the subgraphs $J_{[\alpha]_{\sim}},\varphi^*(J_{[\alpha]_{\sim}}),\ldots,(\varphi^*)^{\nu-1}(J_{[\alpha]_{\sim}})$ are distinct and that every element of $\mathcal L([\alpha]_\sim)$ is a vertex of one of these $\nu$ subgraphs. Since $J_{[\alpha]_\sim}$ is the connected component of $\FS(\mathsf{Cycle}_n,Y)$ containing $\sigma_{[\alpha]_\sim}$ (which is a linear extension of $\alpha$), it follows from Theorem~\ref{Thm2} that $V(J_{[\alpha]_\sim})=\mathcal L([\alpha]_{\approx})$. The proof of Corollary~\ref{CorApprox} tells us that the vertex set of $(\varphi^*)^k(J_{[\alpha]_\sim})$ is the set of linear extensions of $\Phi^k([\alpha]_\approx)$. Therefore, we will be done if we can prove that $[\alpha]_\sim$ decomposes into the disjoint union 
\begin{equation}\label{Eq3}
[\alpha]_\sim=\bigsqcup_{k=0}^{\nu-1}\Phi^k([\alpha]_{\approx}).
\end{equation}

Let $Z_1, \ldots, Z_r$ denote the connected components of $\overline{Y}$, where each $Z_i$ has size $n_i$.  Let \[\Phi_i\colon \Acyc(Z_i)/\!\!\approx\,\to\Acyc(Z_i)/\!\!\approx\] be the map that flips a source of an acyclic orientation of $Z_i$ into a sink, which is well-defined by Corollary~\ref{CorApprox}. We now define yet another equivalence relation on $\Acyc(\overline{Y})$.  Say that a double flip is a \dfn{local double flip} if the flipped source and sink are in the same connected component of $\overline Y$.  We say that two acyclic orientations $\beta, \beta' \in \Acyc(\overline{Y})$ are \dfn{local-double-flip equivalent}, denoted $\beta \tripprox \beta'$, if $\beta'$ can be obtained from $\beta$ by a sequence of local double flips. Note that every equivalence class in $\Acyc(\overline{Y})/\!\!\approx$ is a union of equivalence classes in $\Acyc(\overline{Y})/\!\!\tripprox$. 

Every acyclic orientation $\beta$ of $\overline Y$ restricts to an acyclic orientaion $\beta^{(i)}$ of $Z_i$. This yields a map $f\colon \Acyc(\overline Y)\to\prod_{i=1}^r\Acyc(Z_i)$ given by $f(\beta)=(\beta^{(1)},\ldots,\beta^{(r)})$, which induces the bijections 
\begin{equation}\label{Eq1}f_1:\Acyc(\overline Y)/\!\!\sim\,\longrightarrow\,\prod_{i=1}^r(\Acyc(Z_i)/\!\!\sim)
\end{equation}
and
\begin{equation}\label{Eq2}
f_2:\Acyc(\overline Y)/\!\!\tripprox\,\longrightarrow\prod_{i=1}^r(\Acyc(Z_i)/\!\!\approx).
\end{equation}
In particular, the bijection $f_1$ maps our fixed toric acyclic orientation $[\alpha]_{\sim}$ to the tuple of toric acyclic orientations $([\alpha^{(1)}]_{\sim},\ldots,[\alpha^{(r)}]_{\sim})$.  

It follows from Corollary~\ref{CorPhiOrbits} that each toric acyclic orientation $[\alpha^{(i)}]_{\sim}$ on $Z_i$ consists of the disjoint union of $n_i$ equivalence classes in $\Acyc(Z_i)/\!\!\approx$ and that there is a faithful transitive action of  $\mathbb{Z}/n_i\mathbb{Z}$ on these $n_i$ equivalence classes given by $\Phi_i$.  By combining these individual cyclic actions and using the bijection $f_2$, we obtain a faithful transitive action of the (additive) group $\Gamma=(\mathbb{Z}/n_1\mathbb{Z}) \times \cdots \times (\mathbb{Z}/n_r\mathbb{Z})$ on the set of $\tripprox$-equivalence classes contained in $[\alpha]_\sim$.
In particular, $[\alpha]_{\sim}$ is the disjoint union of $n_1\cdots n_r$ equivalence classes in $\Acyc(\overline{Y})/\!\!\tripprox$. 

Let $\gamma_i$ denote the element of $\Gamma$ that acts by applying $\Phi_i$ to $\Acyc(Z_i)/\!\!\approx$. Every time we apply a local double flip to an acyclic orientation of $\overline Y$, we do not change the $\tripprox$-equivalence class. Every time we apply a double flip that is not a local double flip, we must flip a source in some connected component $Z_i$ into a sink and simultaneously flip a sink in some different connected component $Z_j$ into a source. This non-local double flip has the effect of applying the action of $\gamma_i-\gamma_j$ to the $\tripprox$-equivalence class. Let $\Delta$ be the subgroup of $\Gamma$ generated by all elements of the form $\gamma_i-\gamma_j$; we see that each element of $\Acyc(\overline{Y})/\!\!\approx$ contained in $[\alpha]_\sim$ is the union of a $\Delta$-orbit of $\Acyc(\overline{Y})/\!\!\tripprox$. Furthermore, for every element of $\Acyc(\overline{Y})/\!\!\tripprox$ contained in $[\alpha]_\sim$, the union of the sets in its $\Delta$-orbit is an element of $\Acyc(\overline{Y})/\!\!\approx$ contained in $[\alpha]_\sim$. It is now straightforward to compute (using B\'ezout's Lemma, say) that $\Delta$ has index $\nu$ in $\Gamma$ and that the cosets in $\Gamma/\Delta$ are $k\gamma_1+\Delta$ for $0 \leq k \leq \nu-1$. The element $\gamma_1$ acts by applying $\Phi_1$, which changes a source in $Z_1$ into a sink.
If we have an equivalence class $[\beta]_\approx$ contained in $[\alpha]_\sim$, then changing a source of $
\beta$ in $Z_1$ into a sink (i.e., applying $\Phi_1$ to $[\beta^{(1)}]_\approx$) corresponds to applying $\Phi$ to $[\beta]_\approx$. Therefore, \eqref{Eq3} follows from the Orbit-Stabilizer Theorem. 
\end{proof}

\begin{example}\label{Exam1}
Suppose $Y=\begin{array}{l}\includegraphics[height=.7cm]{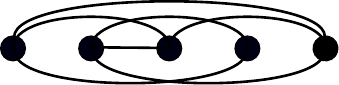}\end{array}$ so that $\overline Y$ is the graph $\begin{array}{l}\includegraphics[height=.22cm]{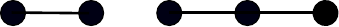}.\end{array}$ Let $Z_1$ be the connected component of $\overline Y$ with $2$ vertices, and let $Z_2$ be the connected component with $3$ vertices. Each green box in Figure~\ref{Fig4} encompasses a local-double-flip equivalence class of acyclic orientations of $\overline Y$; this figure shows how $\Phi_1$ and $\Phi_2$ act on these equivalence classes. In this case, the group $\Gamma$ is $(\mathbb Z/2\mathbb Z)\times(\mathbb Z/3\mathbb Z)$. The element $\gamma_1=(1,0)$ acts on the double-flip equivalence classes via $\Phi_1$, and $\gamma_2=(0,1)$ acts via $\Phi_2$. The graph $\overline Y$ has only $1$ toric acyclic orientation. Because $\gcd(2,3)=1$, it follows from Theorem~\ref{Thm5} that $\FS(\mathsf{Cycle}_5,Y)$ has only $1$ connected component; according to Theorem~\ref{Thm2}, this implies that there is only $1$ double-flip equivalence class of acyclic orientations of $\overline Y$. Indeed, this is because $\Gamma$ is generated by the element $\gamma_1-\gamma_2=(1,-1)$. We can reach any local-double-flip equivalence class from any other by repeatedly applying $\gamma_1-\gamma_2$, and each application of $\gamma_1-\gamma_2$ corresponds to applying a non-local double flip.   

\begin{figure}[ht]
\begin{center}
\includegraphics[width=.8\linewidth]{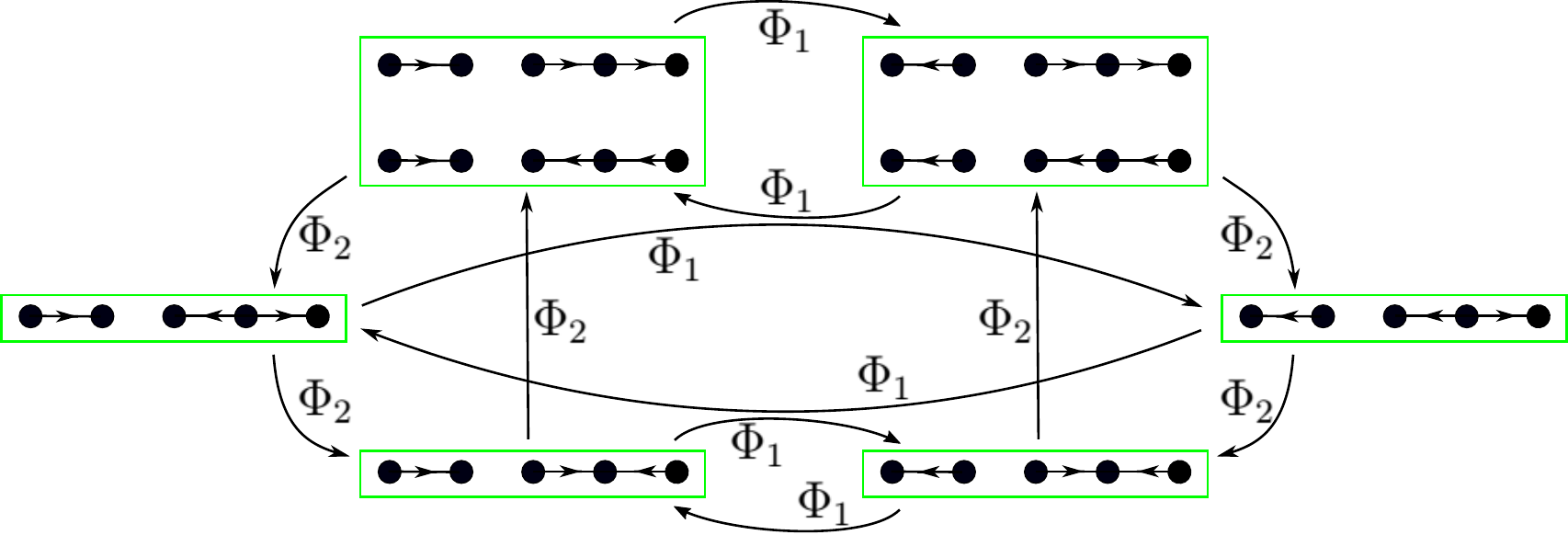}
\caption{The action of $\Phi_1$ and $\Phi_2$ on local-double-flip equivalence classes.}
\label{Fig4}
\end{center}  
\end{figure}
\end{example}
 
\begin{remark}
Suppose $G$ is a graph with connected components of sizes $n_1,\ldots,n_r$, and let $\nu=\gcd(n_1,\ldots,n_r)$. Letting $Y=\overline G$, we see immediately from Theorem~\ref{Thm2} and Theorem~\ref{Thm5} that each toric acyclic orientation of $G$ is a disjoint union of exactly $\nu$ double-flip equivalence classes. \emph{A priori}, it is not obvious that this should be the case. 
\end{remark}

Theorem~\ref{Thm5} allows us to enumerate the connected components of $\FS(\mathsf{Cycle}_n,Y)$ in terms of the Tutte polynomial $T_{\overline Y}(x,y)$ of $\overline Y$; this is analogous to Corollary~\ref{Cor1}. 

\begin{corollary}
Let $Y$ be a graph with $n\geq 3$ vertices. Let $Z_1,\ldots,Z_r$ be the connected components of $\overline Y$, and let $\nu=\gcd(|V(Z_1)|,\ldots,|V(Z_r)|)$. Then the number of connected components of $\FS(\mathsf{Cycle}_n,Y)$ is $T_{\overline Y}(1,0)\nu$. 
\end{corollary}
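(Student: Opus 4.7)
The plan is to combine Theorem~\ref{Thm5} with the known evaluation $|\Acyc(G)/\!\!\sim| = T_G(1,0)$ counting toric acyclic orientations. By Theorem~\ref{Thm5}, the connected components of $\FS(\mathsf{Cycle}_n,Y)$ are precisely the pairwise distinct subgraphs $(\varphi^*)^k(J_{[\alpha]_\sim})$ indexed by pairs $([\alpha]_\sim, k)$ with $[\alpha]_\sim \in \Acyc(\overline Y)/\!\!\sim$ and $0 \leq k \leq \nu-1$. Consequently, the number of connected components is exactly $\nu \cdot |\Acyc(\overline Y)/\!\!\sim|$, and it suffices to establish $|\Acyc(\overline Y)/\!\!\sim| = T_{\overline Y}(1,0)$.

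For this, I would invoke the known identity $|\Acyc(G)/\!\!\sim| = T_G(1,0)$ for any graph $G$ (in the spirit of \cite{Develin}, and analogous to Stanley's $T_G(2,0) = |\Acyc(G)|$ used in Corollary~\ref{Cor1}) and apply it with $G = \overline Y$. If one wishes to avoid a direct citation, one can derive this identity in two steps. For a connected graph $G$ on $m$ vertices, one shows that each toric equivalence class contains exactly one acyclic orientation with a prescribed vertex $v$ as unique source: existence follows by successively flipping each source other than $v$ into a sink until only $v$ is a source, and uniqueness can be proved using a height-function invariant of the flip relation. The classical Greene--Zaslavsky theorem then identifies the number of acyclic orientations with a unique source at $v$ with $T_G(1,0)$. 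For a disconnected $\overline Y$ with components $Z_1, \ldots, Z_r$, the bijection $f_1 \colon \Acyc(\overline Y)/\!\!\sim \,\to\, \prod_i \Acyc(Z_i)/\!\!\sim$ constructed in the proof of Theorem~\ref{Thm5}, combined with the multiplicativity $T_{\overline Y}(x,y) = \prod_i T_{Z_i}(x,y)$ over disjoint unions, reduces to the connected case.

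Combining these gives the number of connected components as $\nu \cdot T_{\overline Y}(1,0)$, as claimed. The main obstacle, should one wish to write a self-contained argument rather than cite the toric-orientation count, is the uniqueness half of the ``unique-source representative'' statement for connected graphs: the existence portion is an easy inductive flipping procedure, but uniqueness requires exhibiting a flip-invariant that distinguishes torically inequivalent acyclic orientations sharing the same unique source. Since this fact is already a standard result in the literature on toric arrangements, I would simply cite it and let the corollary follow immediately from Theorem~\ref{Thm5}.
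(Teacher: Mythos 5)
Your proposal is correct and follows essentially the same route as the paper: the paper's proof simply cites \cite{Develin} for the identity $\lvert\Acyc(\overline Y)/\!\!\sim\rvert = T_{\overline Y}(1,0)$ and then reads off the count $\nu\cdot T_{\overline Y}(1,0)$ from the decomposition in Theorem~\ref{Thm5}, exactly as you do. Your optional sketch of deriving the toric-orientation count via unique-source representatives and Greene--Zaslavsky is a reasonable supplement but is not needed, since the citation suffices.
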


\begin{proof}
It is known \cite{Develin} that $T_{\overline Y}(1,0)$ is the number of toric acyclic orientations of $\overline Y$, so the enumeration of the connected components follows from Theorem~\ref{Thm5}. 
\end{proof}

\begin{remark}
If $\overline Y$ is connected, then the number of connected components of $\FS(\mathsf{Cycle}_n,Y)$ is $T_{\overline Y}(1,0)n$. It follows from a well-known theorem of Greene and Zaslavsky \cite{Greene} that this is also the number of acyclic orientations of $\overline Y$ with exactly $1$ source. 
\end{remark}

For an even more concrete application of Theorem~\ref{Thm5}, we consider the special case in which $\overline Y$ is a forest. 
\begin{corollary}\label{Cor3}
Let $Y$ be a graph with $n\geq 3$ vertices such that $\overline Y$ is a forest consisting of trees $\mathscr T_1,\ldots,\mathscr T_r$, and let $\nu=\gcd(|V(\mathscr T_1)|,\ldots,|V(\mathscr T_r)|)$. Then $\FS(\mathsf{Cycle}_n,Y)$ has $\nu$ connected components. These connected components are pairwise isomorphic, and they each contain $n!/\nu$ vertices. Moreover, each connected component of $\FS(\mathsf{Cycle}_n,Y)$ has an automorphism of order $2$. 
\end{corollary}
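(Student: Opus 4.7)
The first three assertions follow directly from preceding results. Since $\overline Y$ is a forest with $r$ components, its Tutte polynomial is $T_{\overline Y}(x,y)=x^{n-r}$, so $T_{\overline Y}(1,0)=1$ and the corollary immediately preceding this one gives $\FS(\mathsf{Cycle}_n,Y)$ exactly $\nu$ connected components. Theorem~\ref{Thm5} says these form a single $\varphi^*$-orbit of pairwise isomorphic subgraphs, and since there are $n!$ vertices in total, each contains $n!/\nu$ vertices.

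For the order-$2$ automorphism, I plan to use the full dihedral automorphism group of $\mathsf{Cycle}_n$. Fix a reflection $\rho_0$; the $n$ reflections of $\mathsf{Cycle}_n$ are $\rho_k:=\varphi^k\rho_0$ for $k\in\mathbb{Z}/n\mathbb{Z}$, and by Proposition~\ref{Prop3} each induces an involution $\rho_k^*\colon\sigma\mapsto\sigma\circ\rho_k$ of $\FS(\mathsf{Cycle}_n,Y)$ that is fixed-point-free on $\mathfrak{S}_n$ (since $\sigma\circ\rho_k=\sigma$ forces $\rho_k=\mathrm{id}$). So if, for each component $C$, I can locate a $k$ with $\rho_k^*(C)=C$, then the restriction $\rho_k^*|_C$ will automatically be a nontrivial involution.

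The main obstacle is showing each component is invariant under some reflection. I would label the components $C_0,\ldots,C_{\nu-1}$ in $\varphi^*$-cyclic order per Theorem~\ref{Thm5}, so that $\varphi^*(C_i)=C_{i+1\bmod\nu}$. The dihedral relation $\rho_0\varphi=\varphi^{-1}\rho_0$ translates to $\rho_0^*\circ\varphi^*=(\varphi^*)^{-1}\circ\rho_0^*$ on $\FS(\mathsf{Cycle}_n,Y)$, which forces the induced action of $\rho_0^*$ on the set of components to have the form $C_i\mapsto C_{c-i\bmod\nu}$ for some constant $c\in\mathbb{Z}/\nu\mathbb{Z}$. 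Combined with $\rho_k^*=\rho_0^*\circ(\varphi^*)^k$, this gives $\rho_k^*\colon C_i\mapsto C_{c-k-i\bmod\nu}$. Since $\nu\mid n$, the residue $c-k\bmod\nu$ realizes every element of $\mathbb{Z}/\nu\mathbb{Z}$ as $k$ varies over $\mathbb{Z}/n\mathbb{Z}$, so the fixing condition $c-k\equiv 2i\pmod{\nu}$ is solvable in $k$ for any $i$, producing the required reflection and completing the proof.
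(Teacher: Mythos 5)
Your proposal is correct and follows essentially the same route as the paper: the count, isomorphism, and size claims come from Theorem~\ref{Thm5} together with $T_{\overline Y}(1,0)=1$ for a forest, and the order-$2$ automorphism is obtained by composing a reflection of $\mathsf{Cycle}_n$ with a suitable rotation so that the resulting reflection stabilizes the given component, then invoking Proposition~\ref{Prop3}. Your explicit computation of the dihedral action $C_i\mapsto C_{c-k-i}$ and the remark that $\sigma\mapsto\sigma\circ\rho_k$ is fixed-point-free (so the restriction is genuinely nontrivial) just spell out details the paper leaves implicit.
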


\begin{proof}
We may assume $V(Y)=[n]$. It is known that $T_{\overline Y}(1,0)=\lvert\Acyc(\overline Y)/\!\!\sim\rvert=1$ because $\overline Y$ is a forest. Consequently, the statements about the enumeration and sizes of the connected components of $\FS(\mathsf{Cycle}_n,Y)$ follow directly from Theorem~\ref{Thm5}. 

Let $H$ be one of the connected components of $\FS(\mathsf{Cycle}_n,Y)$. By Theorem~\ref{Thm5}, the connected components of $\FS(\mathsf{Cycle}_n,Y)$ are $H,\varphi^*(H),\ldots,(\varphi^*)^{\nu-1}(H)$. We want to show that each connected component $(\varphi^*)^i(H)$ has an automorphism of order $2$; since these connected components are pairwise isomorphic, it suffices to prove that $H$ has an automorphism of order $2$. The automorphism group of $\mathsf{Cycle}_n$ is the dihedral group of order $2n$; let $\psi$ be one of the reflections in this group. Let $m\in\{0,\ldots,\nu-1\}$ be such that $\psi^*(H)=(\varphi^*)^m(H)$ (where, by abuse of notation, $\psi^*(H)$ denotes the connected component whose vertices form the set $\psi^*(V(H))$). Then $(\psi\circ\varphi^{-m})^*(H)=(\varphi^*)^{-m}(\psi^*(H))=H$. Since $\psi\circ\varphi^{-m}$ is a reflection in the dihedral group of order $2n$, it has order $2$. 
\end{proof}

\begin{example}\label{Exam3}
Let us revisit Example~\ref{Exam2}, which concerns the graph $\FS(\mathsf{Cycle}_5,Y)$ where \[Y=\begin{array}{l}\includegraphics[height=1.6cm]{FriendsPIC17}\end{array}.\]  Notice that the complement of $Y$ is the tree \[\overline Y=\begin{array}{l}\includegraphics[height=1.6cm]{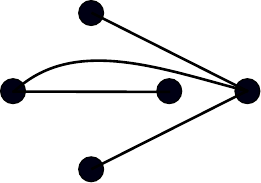}\end{array};\] Corollary~\ref{Cor3} now explains why $\FS(\mathsf{Cycle}_5,Y)$ has $5$ pairwise isomorphic connected components, each with an automorphism of order $2$.
\end{example}

\begin{corollary}\label{Cor2}
Let $Y$ be a graph with $n\geq 3$ vertices. The graph $\FS(\mathsf{Cycle}_n,Y)$ is connected if and only if $\overline Y$ is a forest consisting of trees $\mathscr T_1,\ldots,\mathscr T_r$ such that $\gcd(|V(\mathscr T_1)|,\ldots,|V(\mathscr T_r)|)=1$. 
\end{corollary}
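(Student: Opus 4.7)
The plan is to combine Corollary~\ref{Cor3} with the immediately preceding corollary on component counts. For the ``if'' direction, if $\overline Y$ is a forest whose tree sizes have gcd $1$, then Corollary~\ref{Cor3} gives exactly $\nu=1$ connected component, so $\FS(\mathsf{Cycle}_n,Y)$ is connected.

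For the converse, suppose $\FS(\mathsf{Cycle}_n,Y)$ is connected. The preceding corollary gives $T_{\overline Y}(1,0)\cdot\nu=1$, so $T_{\overline Y}(1,0)=1$ and $\nu=1$; the latter is precisely the gcd condition. Using multiplicativity of the Tutte polynomial over connected components, $T_{\overline Y}(1,0)=\prod_{i=1}^r T_{Z_i}(1,0)$, with each factor a positive integer (equal to $|\Acyc(Z_i)/{\sim}|$), so every $T_{Z_i}(1,0)$ must equal $1$. It therefore suffices to show that a connected graph $Z$ containing a cycle satisfies $|\Acyc(Z)/{\sim}|\geq 2$.

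To prove this, I would fix a cycle $C$ in $Z$ with a chosen cyclic traversal and, for each $\alpha\in\Acyc(Z)$, let $f(\alpha)$ denote the number of edges of $C$ whose orientation in $\alpha$ agrees with the traversal. A short case analysis shows $f$ is invariant under source/sink flips: the two $C$-edges incident to a vertex $v\in V(C)$ that is a source or a sink of $\alpha$ are either both outgoing or both incoming, and exactly one of them is ``forward'' with respect to the traversal while the other is ``backward''. Reversing both preserves $f$, and flips at vertices outside $V(C)$ clearly do not affect $f$. Thus $f$ is constant on each $\sim$-equivalence class. I would then exhibit an acyclic orientation $\alpha$ of $Z$ with $f(\alpha)=|C|-1$ by orienting $C$ with exactly one backward edge (producing a path-like acyclic orientation of $C$) and extending to $Z$ via a topological sort of the partial directed graph. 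The reversed orientation $-\alpha$ is also acyclic and has $f(-\alpha)=1\neq|C|-1$ because $|C|\geq 3$, so $\alpha$ and $-\alpha$ lie in different toric classes, witnessing $|\Acyc(Z)/{\sim}|\geq 2$.

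The main obstacle is the last step, and especially the flip-invariance lemma, which requires verifying that a source- or sink-flip at a cycle vertex always toggles one ``forward'' $C$-edge with one ``backward'' $C$-edge; the topological-sort extension is what guarantees that the unbalanced orientation of $C$ can actually be realized as an acyclic orientation of the possibly larger ambient graph $Z$.
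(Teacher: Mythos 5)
Your proposal is correct, and it diverges from the paper in the key step. The ``if'' direction and the extraction of the gcd condition are the same in spirit: the paper handles the forest case entirely by Corollary~\ref{Cor3}, while you get $\nu=1$ from the component-count formula $T_{\overline Y}(1,0)\,\nu$; both are fine. The real difference is in showing that $\overline Y$ containing a cycle forces disconnectedness. The paper does this by monotonicity: it passes to the graph $Q$ consisting of one $m$-cycle plus isolated vertices, notes $Y\subseteq\overline Q$, invokes Proposition~\ref{Prop1} to reduce to $\FS(\mathsf{Cycle}_n,\overline Q)$, and then simply reads off $T_Q(1,0)=m-1\geq 2$ from the known Tutte polynomial of a cycle, applying Theorem~\ref{Thm5}. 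You instead prove directly that any graph containing a cycle has at least two toric acyclic orientations, via the flip-invariant $f(\alpha)$ counting forward edges of a fixed cycle $C$; your case analysis is sound (a source or sink of $C$ has its two incident $C$-edges both outgoing or both incoming, exactly one forward and one backward, so a flip exchanges them, and flips off $C$ touch no $C$-edge), the topological-sort extension of the one-backward-edge orientation of $C$ to all of $Z$ works, and comparing $\alpha$ with its global reversal gives $f$-values $|C|-1\neq 1$. This invariant is essentially the standard ``circulation around a cycle'' invariant of toric equivalence, so your route buys a self-contained proof that does not require knowing $T_{\mathsf{Cycle}_m}(x,y)$ (though you still use $T_G(1,0)=\lvert\Acyc(G)/{\sim}\rvert$ and Tutte multiplicativity, facts the paper also relies on), whereas the paper's route is shorter given the machinery already in place; note also that your detour through connected components $Z_i$ is not really needed, since your invariant argument applies verbatim to $\overline Y$ itself to give $T_{\overline Y}(1,0)\geq 2$.
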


\begin{proof}
If $\overline Y$ is a forest consisting of trees $\mathscr T_1,\ldots,\mathscr T_r$, then Corollary~\ref{Cor3} tells us that $\FS(\mathsf{Cycle}_n,Y)$ is connected if and only if $\gcd(|V(\mathscr T_1)|,\ldots,|V(\mathscr T_r)|)=1$. Now suppose $\overline Y$ is not a forest, meaning it contains a cycle of some length $m$ with $3\leq m\leq n$. Let $Q$ be the graph consisting of $n-m$ isolated vertices and one cycle of length $m$. Since $Q$ is a subgraph of $\overline Y$, the graph $Y$ must be a subgraph of $\overline Q$. Our goal is to show that $\FS(\mathsf{Cycle}_n,Y)$ is disconnected; by Proposition~\ref{Prop1}, it suffices to prove that $\FS(\mathsf{Cycle}_n,\overline Q)$ is disconnected. By Theorem~\ref{Thm5}, it suffices to show that $Q$ has at least $2$ toric acyclic orientations. As mentioned above, the number of toric acyclic orientations of $Q$ is $T_{Q}(1,0)$, where $T_Q(x,y)$ is the Tutte polynomial of $Q$. It is well known (and easy to prove from definitions) that $T_Q(x,y)=(x+x^2+\cdots+x^{m-1})+y$, so $T_Q(1,0)=m-1\geq 2$, as desired.  
\end{proof}

Recall that a graph on $n$ vertices is called \dfn{Hamiltonian} if it has a subgraph isomorphic to $\mathsf{Cycle}_n$. By combining Corollary~\ref{Cor2} with Proposition~\ref{Prop1}, we obtain a sufficient condition to guarantee that $\FS(X,Y)$ is connected whenever $X$ is Hamiltonian: if $X$ and $Y$ are graphs on $n$ vertices such that $X$ is Hamiltonian and $\overline Y$ is a forest consisting of trees $\mathscr T_1,\ldots,\mathscr T_r$ with $\gcd(|V(\mathscr T_1)|,\ldots,|V(\mathscr T_r)|)=1$, then $\FS(X,Y)$ is connected.

\section{Sufficient Conditions for Connectivity}\label{Sec:Hereditary}

A \dfn{hereditary class} is a collection of (isomorphism types of) graphs that is closed under taking induced subgraphs. For example, for each fixed $d\geq 1$, the set of all graphs with maximum degree at most $d$ is hereditary. Another example of a hereditary class is the set of all bipartite graphs. Recall that if $G$ is a graph on $n$ vertices, then a \emph{Hamiltonian path} in $G$ is a subgraph of $G$ isomorphic to $\mathsf{Path}_n$.

Our goal in this section is to find sufficient conditions on the graphs $X$ and $Y$ that guarantee $\FS(X,Y)$ being connected. Our general setup will involve some fixed hereditary class $\mathcal H$. We will consider the case in which $X$ contains a Hamiltonian path and $\overline Y$ belongs to $\mathcal H$. We will state our main results in fairly general terms and then exhibit several specific applications. Our results rely heavily on Theorem~\ref{Thm1}, which tells us that the connected components of $\FS(\mathsf{Path}_n,Y)$ correspond to acyclic orientations of $\overline Y$. It will be convenient to assume that $X$ has vertex set $[n]$ and that $\mathsf{Path}_n$ is a genuine subgraph of $X$ (meaning $\{i,i+1\}\in E(X)$ for all $i\in[n-1]$). This does not sacrifice any generality because the isomorphism type of $\FS(X,Y)$ depends only on the isomorphism types of $X$ and $Y$. Proposition~\ref{Prop1} tells us that the vertex set of each connected component of $\FS(X,Y)$ is a union of vertex sets of connected components of $\FS(\mathsf{Path}_n,Y)$; Theorem~\ref{Thm1} tells us that the vertex set of a connected component of $\FS(\mathsf{Path}_n,Y)$ is the set of linear extensions of the corresponding acyclic orientation of $\overline Y$.  

In what follows, recall that if $G$ is a graph with vertex set $[n]$ and $\sigma\in\mathfrak S_n$, then $\alpha_G(\sigma)$ denotes the unique acyclic orientation of $G$ that has $\sigma$ as a linear extension. 

\begin{theorem}\label{thm:hereditary}
Let $X$ and $Y$ be graphs with $V(X)=V(Y)=[n]$, and suppose that $\mathsf{Path}_n$ is a subgraph of $X$. Suppose also that each connected component $B$ of $\FS(X,Y)$ contains some permutation $\sigma_B$ such that $n$ is a sink of $\alpha_{\overline Y}(\sigma_B)$.  Then the number of connected components of $\FS(X,Y)$ is at most the number of connected components of $\FS(X\vert_{[n-1]}, Y\vert_{[n-1]})$.
\end{theorem}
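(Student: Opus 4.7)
The strategy is to exhibit a surjection from the set of connected components of $\FS(X\vert_{[n-1]},Y\vert_{[n-1]})$ onto the set of connected components of $\FS(X,Y)$. Let $S=\{\sigma\in\mathfrak S_n:\sigma(n)=n\}$, naturally identified with $\mathfrak S_{n-1}$. I would first observe that the induced subgraph $\FS(X,Y)\vert_S$ coincides, under this identification, with $\FS(X\vert_{[n-1]},Y\vert_{[n-1]})$: any $(X,Y)$-friendly swap between elements of $S$ is across an edge $\{i,j\}\in E(X)$ with $i,j\neq n$ and $\sigma(i),\sigma(j)\neq n$, hence is a friendly swap for the restricted graphs, and every friendly swap for the restricted graphs is a valid swap in $\FS(X,Y)$ because $X\vert_{[n-1]}$ and $Y\vert_{[n-1]}$ are subgraphs of $X$ and $Y$. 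Given this identification, the map that sends a connected component $C$ of $\FS(X\vert_{[n-1]},Y\vert_{[n-1]})$ to the (unique) component of $\FS(X,Y)$ containing $C$ is well-defined; once I show it is surjective, the theorem follows.

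The heart of the argument is showing that every connected component $B$ of $\FS(X,Y)$ intersects $S$. Starting from the hypothesized $\sigma_B\in B$ with $n$ a sink of $\alpha_{\overline Y}(\sigma_B)$, I claim we can ``push $n$ rightward'' by friendly swaps until it occupies position $n$. Suppose $\sigma_B(k)=n$ with $k<n$. In the poset $([n],\leq_{\alpha_{\overline Y}(\sigma_B)})$, the element $n$ is maximal because it is a sink, so nothing can cover it; applying the key observation from the proof of Theorem~\ref{Thm1} (that for consecutive positions, $\sigma_B(k+1)$ either covers $\sigma_B(k)$ or is incomparable to it, with incomparability equivalent to adjacency in $Y$), we conclude that $\sigma_B(k+1)$ is incomparable to $n$ in the poset and hence $\{n,\sigma_B(k+1)\}\in E(Y)$. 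Since $\{k,k+1\}\in E(\mathsf{Path}_n)\subseteq E(X)$, the transposition across $\{k,k+1\}$ is a valid $(X,Y)$-friendly swap, producing a new permutation $\sigma_B'\in B$ with $\sigma_B'(k+1)=n$.

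The remaining point is that the sink property is preserved: the swap across $\{k,k+1\}$ is simultaneously a $(\mathsf{Path}_n,Y)$-friendly swap, so Theorem~\ref{Thm1} guarantees $\alpha_{\overline Y}(\sigma_B')=\alpha_{\overline Y}(\sigma_B)$, and in particular $n$ remains a sink of this orientation. Iterating this ``push'' $n-k$ times produces a permutation $\widetilde\sigma_B\in B$ with $\widetilde\sigma_B(n)=n$, so $B\cap S\neq\emptyset$. Consequently every connected component of $\FS(X,Y)$ contains a component of $\FS(X,Y)\vert_S\cong\FS(X\vert_{[n-1]},Y\vert_{[n-1]})$, establishing surjectivity. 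The main potential obstacle is the preservation of the sink property under the pushing procedure, but this is precisely where Theorem~\ref{Thm1} does the work, since rightward swaps along the path never alter the underlying acyclic orientation.
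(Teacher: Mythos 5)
Your proof is correct and takes essentially the same approach as the paper: the paper builds the dual map (an injection from components of $\FS(X,Y)$ to components of $\FS(X\vert_{[n-1]},Y\vert_{[n-1]})$ by restricting a representative with $\sigma_B(n)=n$, with the same lifting of restricted friendly swaps that underlies your identification of $\FS(X,Y)\vert_S$ with $\FS(X\vert_{[n-1]},Y\vert_{[n-1]})$). The only cosmetic difference is that the paper obtains a representative fixing $n$ directly from Theorem~\ref{Thm1} (choosing a linear extension of $\alpha_{\overline Y}(\sigma_B)$ ending in $n$), whereas you re-derive this by an explicit and valid ``push $n$ rightward'' argument.
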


\begin{proof}
Let $B_1,\ldots,B_r$ be the connected components of $\FS(X,Y)$, and let $C_1, \ldots, C_s$ be the connected components of $\FS(X\vert_{[n-1]}, Y\vert_{[n-1]})$. Given $i\in[r]$, we know by hypothesis that there is a permutation $\sigma_{B_i}$ in the connected component $B_i$ such that $n$ is a sink of $\alpha_{\overline Y}(\sigma_{B_i})$. Theorem~\ref{Thm1} tells us that every linear extension of $\alpha_{\overline Y}(\sigma_{B_i})$ is in $B_i$. By replacing $\sigma_{B_i}$ with a different linear extension of $\alpha_{\overline Y}(\sigma_{B_i})$ that sends $n$ to $n$ (such a linear extension certainly exists), we may assume without loss of generality that $\sigma_{B_i}(n)=n$. The acyclic orientation $\alpha_{\overline Y}(\sigma_{B_i})$ restricts to an acyclic orientation $\beta_{B_i}$ of $\overline Y\vert_{[n-1]}$. Because $\mathsf{Path}_{n-1}$ is a subgraph of $X\vert_{[n-1]}$, Theorem~\ref{Thm1} implies that all linear extensions of $\beta_{B_i}$ belong to a single connected component $f(B_i)$ of $\FS(X\vert_{[n-1]},Y\vert_{[n-1]})$. We will show that the resulting map $f\colon  \{B_1, \ldots, B_r\} \to \{C_1, \ldots, C_s\}$ is injective.

Suppose $f(B_i)=f(B_j)=C_k$.  Then $\alpha_{\overline{Y}}(\sigma_{B_i})$ and $\alpha_{\overline{Y}}(\sigma_{B_j})$ both have $n$ as a sink, and $\mathcal L(\beta_{B_i})$ and $\mathcal L(\beta_{B_j})$ are both subsets of $V(C_k)$.  Because $\sigma_{B_i}$ and $\sigma_{B_j}$ send $n$ to $n$, their restrictions $\sigma'_{B_i}=\sigma_{B_i}\vert_{[n-1]}$ and $\sigma'_{B_j}=\sigma_{B_j}\vert_{[n-1]}$ are linear extensions of $\beta_{B_i}$ and $\beta_{B_j}$ (respectively) and hence are both vertices in $C_k$.  Thus, there is a sequence of $(X\vert_{[n-1]},Y\vert_{[n-1]})$-friendly swaps that transforms $\sigma'_{B_i}$ into $\sigma'_{B_j}$.  This sequence of $(X\vert_{[n-1]},Y\vert_{[n-1]})$-friendly swaps can be viewed as a sequence of $(X,Y)$-friendly swaps that transforms $\sigma_{B_i}$ into $\sigma_{B_j}$, which implies that $B_i=B_j$, as desired.
\end{proof}

We now investigate more carefully the conditions under which $\FS(X,Y)$ is connected.

\begin{theorem}\label{thm:hereditary-1}
Let $X$ and $Y$ be graphs with $V(X)=V(Y)=[n]$, and suppose that $\mathsf{Path}_n$ is a subgraph of $X$. Suppose that $Y$ is connected and that for every $(n-1)$-vertex induced subgraph $Y'$ of $Y$, the graph $\FS(X\vert_{[n-1]},Y')$ is connected.  Then $\FS(X,Y)$ is connected.
\end{theorem}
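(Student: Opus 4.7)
The plan is to invoke Theorem~\ref{thm:hereditary}. Since $\FS(X\vert_{[n-1]}, Y\vert_{[n-1]})$ is connected (applying the hypothesis with $Y' = Y\vert_{[n-1]}$), it suffices to verify the sink condition of Theorem~\ref{thm:hereditary}: every connected component $B$ of $\FS(X,Y)$ contains some permutation $\sigma_B$ such that $n$ is a sink of $\alpha_{\overline Y}(\sigma_B)$. I will establish the stronger statement that every $B$ contains a permutation $\sigma'$ with $\sigma'(n) = n$. This is stronger because if $\sigma'(n) = n$, then every neighbor $j$ of $n$ in $\overline Y$ satisfies $\sigma'^{-1}(j) < n = \sigma'^{-1}(n)$, making $n$ a sink of $\alpha_{\overline Y}(\sigma')$.

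To prove this, I fix an arbitrary $\sigma \in V(B)$ and route the label $n$ to position $n$ via friendly swaps. Set $\ell = \sigma(n)$; if $\ell = n$, we are done. Otherwise, since $Y$ is connected, there exists a path $\ell = w_0, w_1, \ldots, w_t = n$ in $Y$ with $t \geq 1$. I will iteratively construct permutations $\sigma = \tau_0, \tau_1, \ldots, \tau_t$ in $V(B)$ satisfying $\tau_i(n) = w_i$, so that $\sigma' := \tau_t$ has $\sigma'(n) = n$ as required.

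For the iterative step, suppose $\tau_{i-1} \in V(B)$ has $\tau_{i-1}(n) = w_{i-1}$. The $(X,Y)$-friendly swaps not involving position $n$ correspond exactly to $(X\vert_{[n-1]}, Y\vert_{[n] \setminus \{w_{i-1}\}})$-friendly swaps on the restriction of $\tau_{i-1}$ to positions $[n-1]$. Applying the hypothesis with $Y' = Y\vert_{[n] \setminus \{w_{i-1}\}}$, the graph $\FS(X\vert_{[n-1]}, Y\vert_{[n] \setminus \{w_{i-1}\}})$ is connected, so I can rearrange labels $[n] \setminus \{w_{i-1}\}$ among positions $[n-1]$ to reach some $\widetilde\tau \in V(B)$ with $\widetilde\tau(n) = w_{i-1}$ and $\widetilde\tau(n-1) = w_i$. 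Since $\{n-1, n\} \in E(X)$ (because $\mathsf{Path}_n$ is a subgraph of $X$) and $\{w_{i-1}, w_i\} \in E(Y)$ (by choice of path), an $(X,Y)$-friendly swap across $\{n-1, n\}$ produces $\tau_i \in V(B)$ with $\tau_i(n) = w_i$. After $t$ iterations, Theorem~\ref{thm:hereditary} applies and gives the result. The main conceptual obstacle is recognizing that the hypothesis provides just enough flexibility to reshuffle all other labels while any one designated label sits at position $n$; once this is observed, a path in $Y$ serves as a template for escorting label $n$ one step at a time to its destination.
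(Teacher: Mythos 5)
Your proof is correct and follows essentially the same route as the paper: both reduce to Theorem~\ref{thm:hereditary} and verify its sink hypothesis by walking along the connected graph $Y$, using the connectivity of $\FS(X\vert_{[n-1]},Y\vert_{[n]\setminus\{w\}})$ to shuffle the remaining labels and bring the next vertex of the walk to position $n-1$ while the current one sits at position $n$. The only minor difference is that you perform the swap across $\{n-1,n\}$ at each step and iterate until the label $n$ literally occupies position $n$ (which indeed makes $n$ a sink of the associated orientation of $\overline Y$), whereas the paper stops one step short and instead propagates the statement that each vertex of $\overline Y$ occurs as a sink of $\alpha_{\overline Y}(\tau)$ for some $\tau$ in the component, invoking Theorem~\ref{Thm1} to reposition labels within an acyclic-orientation class.
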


\begin{proof}
The theorem will follow from Theorem~\ref{thm:hereditary} if we can show that each connected component of $\FS(X,Y)$ contains a permutation such that the vertex $n$ is a sink in the associated acyclic orientation of $\overline Y$.  Fix a connected component $B_i$ of $\FS(X,Y)$; we say a vertex $v$ of $\overline Y$ is a \dfn{$\overline Y$-sink relative to $B_i$} if there exists a permutation in $V(B_i)$ whose associated acyclic orientation of $\overline Y$ has $v$ as a sink. Our goal is to prove that $n$ is a $\overline Y$-sink relative to $B_i$; we will actually prove the stronger fact that every vertex in $\overline Y$ is a $\overline Y$-sink relative to $B_i$. First, note that at least one vertex of $\overline Y$ must be a $\overline Y$-sink relative to $B_i$. Indeed, if $\sigma\in V(B_i)$, then $\sigma(n)$ is a $\overline Y$-sink relative to $B_i$. Because $Y$ is connected, it now suffices to prove that if $\{y,z\}\in E(Y)$ and $y$ is a $\overline Y$-sink relative to $B_i$, then $z$ is also a $\overline Y$-sink relative to $B_i$. 

Assume $\{y,z\}\in E(Y)$ and $y$ is a $\overline Y$-sink relative to $B_i$. There is a permutation $\sigma\in V(B_i)$ such that $y$ is a sink of $\alpha_{\overline Y}(\sigma)$. Theorem~\ref{Thm1} tells us that every linear extension of $\alpha_{\overline Y}(\sigma)$ is a vertex in $B_i$, and one such linear extension must send $n$ to $y$. Therefore, we may assume without loss of generality that $\sigma(n)=y$. Let $Y'$ be the induced subgraph of $Y$ on $[n] \setminus\{y\}$. The vertices in $\FS(X\vert_{[n-1]},Y')$ are the bijections from $[n-1]$ to $[n]\setminus \{y\}$, which are just the restrictions to $[n-1]$ of the bijections in $\mathfrak S_n$ that send $n$ to $y$. The hypothesis that $\FS(X \vert_{[n-1]},Y')$ is connected tells us that there is a sequence of $(X \vert_{[n-1]},Y')$-friendly swaps that transforms the vertex $\sigma'=\sigma\vert_{[n-1]}$ into a permutation $\tau'$ satisfying $\tau'(n-1)=z$. Let $\tau$ be the vertex in $\FS(X,Y)$ that agrees with $\tau'$ on $[n-1]$ and satisfies $\tau(n)=y$. In particular, $\tau(n-1)=z$. Since $\{y,z\}$ is not an edge in $\overline Y$, it follows that $z$ must be a sink in $\alpha_{\overline Y}(\tau)$. Finally, the same sequence of $(X \vert_{[n-1]},Y')$-friendly swaps that transforms $\sigma'$ into $\tau'$ can be interpreted as a sequence of $(X,Y)$-friendly swaps that transforms $\sigma$ into $\tau$. This proves that $\tau\in V(B_i)$, so $z$ is a $\overline Y$-sink relative to $B_i$, as desired.
\end{proof}

It will be helpful to have a notion that captures the idea of extending a Hamiltonian path of a graph $X$ and then adding additional edges. Thus, if $X$ is a graph with a Hamiltonian path, then we define a \dfn{prolongation} of $X$ to be a graph $\widetilde{X}$ such that:
\begin{itemize}
\item $\widetilde{X}$ contains a (not necessarily induced) subgraph $X^\#$ that is isomorphic to $X$;
\item $\widetilde{X}$ contains a Hamiltonian path that itself contains a Hamiltonian path of $X^\#$.
\end{itemize}
Theorem~\ref{thm:hereditary-1} has an immediate corollary in the language of hereditary classes and prolongations. 

\begin{corollary}\label{cor:hereditary}
Let $\mathcal{H}$ be a hereditary class.  Let $X$ be a graph on $n_0$ vertices with a Hamiltonian path, and suppose that $\FS(X,Y)$ is connected for every $Y \in \mathcal{H}$ on $n_0$ vertices.  If $\widetilde{X}$ is a prolongation of $X$ with $n$ vertices, then $\FS(\widetilde{X},\widetilde{Y})$ is connected for every $\widetilde{Y} \in \mathcal{H}$ on $n$ vertices.
\end{corollary}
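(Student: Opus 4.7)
The plan is to proceed by induction on $n$, invoking Theorem~\ref{thm:hereditary-1} in the inductive step. Before starting, it is useful to relabel: since the Hamiltonian path of $\widetilde{X}$ contains a Hamiltonian path of the copy $X^\#$ of $X$ as a contiguous sub-path, we may assume $V(\widetilde{X})=[n]$, that $\mathsf{Path}_n\subseteq\widetilde{X}$ (using the Hamiltonian path of $\widetilde{X}$), and that $V(X^\#)=[n_0]$ (by placing $X^\#$ at the beginning of the Hamiltonian path of $\widetilde{X}$).

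In the base case $n=n_0$, the copy $X^\#$ is a spanning subgraph of $\widetilde{X}$ isomorphic to $X$. By Proposition~\ref{Prop1}, $\FS(X,\widetilde{Y})\cong\FS(X^\#,\widetilde{Y})$ is a spanning subgraph of $\FS(\widetilde{X},\widetilde{Y})$; the former is connected by the hypothesis of the corollary, so the latter is too.

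In the inductive step with $n>n_0$, the key observation is that $\widetilde{X}\vert_{[n-1]}$ is itself a prolongation of $X$ with $n-1$ vertices: it contains $\mathsf{Path}_{n-1}$ (the initial segment of the Hamiltonian path of $\widetilde{X}$) and still contains $X^\#$ (since $[n_0]\subseteq[n-1]$). Assuming $\widetilde{Y}$ is connected, we apply Theorem~\ref{thm:hereditary-1} to $\widetilde{X}$ and $\widetilde{Y}$. The containment $\mathsf{Path}_n\subseteq\widetilde{X}$ and the connectedness of $\widetilde{Y}$ are immediate, while the remaining hypothesis, that $\FS(\widetilde{X}\vert_{[n-1]},Y')$ is connected for every $(n-1)$-vertex induced subgraph $Y'$ of $\widetilde{Y}$, follows directly from the inductive hypothesis: $Y'\in\mathcal{H}$ because $\mathcal{H}$ is hereditary, and $\widetilde{X}\vert_{[n-1]}$ is a prolongation of $X$ with $n-1$ vertices. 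Theorem~\ref{thm:hereditary-1} then yields that $\FS(\widetilde{X},\widetilde{Y})$ is connected.

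The subtlest point I expect to face is the case of disconnected $\widetilde{Y}$, for which Theorem~\ref{thm:hereditary-1} does not directly apply. The natural workaround is to invoke the more general Theorem~\ref{thm:hereditary} instead, which demands the additional condition that every connected component $B$ of $\FS(\widetilde{X},\widetilde{Y})$ contain a permutation $\sigma_B$ with $n$ a sink of $\alpha_{\overline{\widetilde{Y}}}(\sigma_B)$. Verifying this hypothesis requires adapting the sink-propagation argument from the proof of Theorem~\ref{thm:hereditary-1} to the multi-component setting of $\widetilde{Y}$, propagating sink-ness both within each $\widetilde{Y}$-component (via edges of $\widetilde{Y}$, as before) and across components (using the inductive hypothesis applied to a suitable induced subgraph removing one vertex from a different component than $\sigma(n)$). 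This is the technical hurdle I would expect to require the most care.
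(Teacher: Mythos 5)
Your main line of argument is the same as the paper's: induction on $n$, with the base case $n=n_0$ handled via Proposition~\ref{Prop1} and the inductive step by applying Theorem~\ref{thm:hereditary-1} to $\widetilde X$ after labeling its Hamiltonian path as $\mathsf{Path}_n$, using heredity of $\mathcal H$ together with the inductive hypothesis for the graphs $\FS(\widetilde X\vert_{[n-1]},Y')$. One small slip: you cannot in general ``place $X^\#$ at the beginning of the Hamiltonian path.'' The position of $X^\#$ along the chosen Hamiltonian path of $\widetilde X$ is dictated by $\widetilde X$ itself; for instance, if $\widetilde X$ consists of a triangle on $\{2,3,4\}$ together with the edges $\{1,2\}$ and $\{4,5\}$, then its unique Hamiltonian path is $1\text{-}2\text{-}3\text{-}4\text{-}5$ and no copy of $K_3$ sits at an end of it. What is true, and all you need, is that since $n>n_0$ the contiguous subpath spanned by $V(X^\#)$ misses at least one endpoint of the Hamiltonian path; label that endpoint $n$. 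Then $n\notin V(X^\#)$, so $\widetilde X\vert_{[n-1]}$ is still a prolongation of $X$, and your inductive step goes through exactly as in the paper.

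The place where your plan genuinely goes astray is the disconnected-$\widetilde Y$ case. The workaround you sketch---invoking Theorem~\ref{thm:hereditary} and propagating sinks across the components of $\widetilde Y$---cannot succeed, because when $\widetilde Y$ is disconnected the desired conclusion is simply false: $\FS(\widetilde X,\widetilde Y)\cong\FS(\widetilde Y,\widetilde X)$, and Proposition~\ref{Prop4} splits the latter into at least two nonempty pieces (there are at least two ordered set partitions of $V(\widetilde X)$ with the prescribed sizes), so $\FS(\widetilde X,\widetilde Y)$ is disconnected. The correct resolution is that this case is vacuous under the hypotheses (for $n_0\geq 2$): if $\mathcal H$ contained a disconnected graph on $n\geq n_0$ vertices, then by heredity it would contain a disconnected graph $Y_0$ on $n_0$ vertices (keep two vertices from different components), and the same argument shows $\FS(X,Y_0)$ is disconnected, contradicting the assumption that $\FS(X,Y)$ is connected for every $n_0$-vertex $Y\in\mathcal H$. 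Hence $\widetilde Y$ is automatically connected and Theorem~\ref{thm:hereditary-1} applies directly. You were right to notice that the connectivity hypothesis needs addressing---the paper applies Theorem~\ref{thm:hereditary-1} without comment---but the fix is this short vacuity observation, not an adapted sink-propagation argument.
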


\begin{proof}
The proof is by induction on $n$. The case $n=n_0$ is obvious, so suppose $n>n_0$. By assumption, $\widetilde{X}$ contains a subgraph $X^\#$ that is isomorphic to $X$ and a Hamiltonian path $P$ that contains a Hamiltonian path of $X^\#$. Since $n>n_0$, we can find an endpoint $p$ of the path $P$ such that $\widetilde{X}\vert_{V(\widetilde{X})\setminus\{p\}}$ is a prolongation of $X$ on $n-1$ vertices. Let us identify $V(\widetilde{X})$ with $[n]$ in such a way that $p$ is identified with $n$ and $P$ is identified with $\mathsf{Path}_n$. Then $\widetilde{X}\vert_{[n-1]}$ is a prolongation of $X$. Since $\widetilde{Y}$ is in the hereditary class $\mathcal H$, every $(n-1)$-vertex induced subgraph of $\widetilde{Y}\vert_{V(\widetilde{Y})\setminus\{y\}}$ is in $\mathcal H$. By our induction hypothesis, the graph $\FS(X\vert_{[n-1]},\widetilde{Y}\vert_{V(\widetilde{Y})\setminus\{y\}})$ is connected for every $y\in V(\widetilde{Y})$. If we identify $V(\widetilde{Y})$ with $[n]$ in an arbitrary way, then it follows from Theorem~\ref{thm:hereditary-1} that $\FS(\widetilde{X},\widetilde{Y})$ is connected. 
\end{proof}

The set of all graphs $Y$ such that $\overline{Y}$ has maximum degree at most $d$ forms a natural hereditary class.  Equivalently, this hereditary class consists of all graphs $Y$ with minimum degree at least $|V(Y)|-d-1$.  Given some $d\geq 1$ and a graph $X$ on $n_0$ vertices, it requires only a finite search to determine whether or not $\FS(X,Y)$ is connected for every graph $Y$ on $n_0$ vertices with minimum degree at least $n_0-d-1$.  Carrying out this finite computation for various $d$'s and $X$'s (with computer assistance) gives corollaries for infinite families of graphs.  Each of these results would presumably require significant work to prove independently in an ad hoc way; Corollary~\ref{cor:hereditary} provides a unified framework for understanding them.

\begin{corollary}\label{cor:triangle-orb}
Let $X$ be the triangle $K_3$.  For every prolongation $\widetilde{X}$ of $X$ and for every graph $\widetilde{Y}$ on $|V(\widetilde{X})|$ vertices with minimum degree at least $|V(\widetilde{X})|-2$, the graph $\FS(\widetilde{X},\widetilde{Y})$ is connected.
\end{corollary}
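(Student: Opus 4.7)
The plan is to obtain this corollary as a direct application of Corollary~\ref{cor:hereditary}. Let $\mathcal{H}$ denote the class of graphs $Y$ such that $\overline{Y}$ has maximum degree at most $1$; equivalently, $Y$ has minimum degree at least $|V(Y)|-2$. I first check that $\mathcal{H}$ is hereditary: if $Y'$ is an induced subgraph of $Y \in \mathcal{H}$, then $\overline{Y'}$ is an induced subgraph of $\overline{Y}$, so its maximum degree is no larger than that of $\overline{Y}$, and hence at most $1$. The graph $X = K_3$ trivially has a Hamiltonian path, so the hypotheses of Corollary~\ref{cor:hereditary} concerning $X$ are met.

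The only remaining task is to verify the base case of Corollary~\ref{cor:hereditary}, namely that $\FS(K_3, Y)$ is connected whenever $Y \in \mathcal{H}$ has $3$ vertices. On $3$ vertices, the condition that $\overline{Y}$ has maximum degree at most $1$ forces $\overline{Y}$ to be either the empty graph or a single edge with an isolated vertex, so $Y$ is either $K_3$ or $\mathsf{Path}_3$. For $Y = K_3$, the graph $\FS(K_3,K_3)$ is the Cayley graph of $\mathfrak{S}_3$ generated by all transpositions, which is well known (and easily checked) to be connected. For $Y = \mathsf{Path}_3$, the symmetry $\FS(K_3,\mathsf{Path}_3) \cong \FS(\mathsf{Path}_3, K_3)$ noted in the introduction combined with Corollary~\ref{Cor1} (which says that $\FS(\mathsf{Path}_n, Y)$ is connected precisely when $Y = K_n$) gives connectedness. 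Both base cases being handled, Corollary~\ref{cor:hereditary} produces the desired conclusion.

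I do not anticipate a genuine obstacle here: once $\mathcal{H}$ is identified as a hereditary class and the two tiny $3$-vertex cases are dispatched, the inductive machinery of Corollary~\ref{cor:hereditary} does all the work. The only mildly delicate point is making sure the correspondence ``minimum degree $\geq n-2$'' $\leftrightarrow$ ``complement has maximum degree $\leq 1$'' is the right hereditary class to feed into that corollary, which is immediate from the definitions.
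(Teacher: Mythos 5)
Your proposal is correct and follows essentially the same route as the paper: the paper also deduces this from Corollary~\ref{cor:hereditary} applied to the hereditary class of graphs whose complements have maximum degree at most $1$, with the base case reduced to a finite check on $3$-vertex graphs (which the paper delegates to computation and you carry out explicitly via $\FS(K_3,K_3)$ and $\FS(K_3,\mathsf{Path}_3)\cong\FS(\mathsf{Path}_3,K_3)$ together with Corollary~\ref{Cor1}). No gaps.
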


\begin{corollary}\label{cor:weird-1}
Let $X$ be the graph $\begin{array}{l}\includegraphics[height=0.8cm]{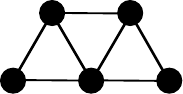}\end{array}$.  For every prolongation $\widetilde{X}$ of $X$ and for every graph $\widetilde{Y}$ on $|V(\widetilde{X})|$ vertices with minimum degree at least $|V(\widetilde{X})|-3$, the graph $\FS(\widetilde{X},\widetilde{Y})$ is connected.
\end{corollary}

\begin{corollary}\label{cor:weird-2}
Let $X$ be the graph $\begin{array}{l}\includegraphics[height=1.33cm]{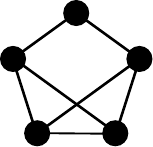}\end{array}$.  For every prolongation $\widetilde{X}$ of $X$ and for every graph $\widetilde{Y}$ on $|V(\widetilde{X})|$ vertices with minimum degree at least $|V(\widetilde{X})|-3$, the graph $\FS(\widetilde{X},\widetilde{Y})$ is connected.
\end{corollary}

Our motivation for investigating the phenomena of this section began with an attempt to understand $\FS(X,Y)$ when $X$ is the graph
\[\begin{array}{l}\includegraphics[height=0.95cm]{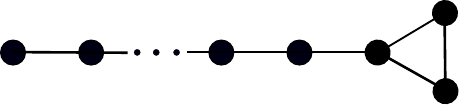}\end{array}.\] More specifically, we are interested in characterizing the graphs $Y$ such that $\FS(X,Y)$ is connected. It follows from Corollary~\ref{cor:triangle-orb} that requiring $Y$ to have minimum degree at least $n-2$ is a sufficient condition to guarantee that $\FS(X,Y)$ is connected. We will see in the next section that this condition is also necessary.

\section{Necessary Conditions for Connectivity}\label{Sec:Lollipops}

We begin by describing a fairly general necessary condition for $\FS(X,Y)$ to be connected in the case where $X$ has a long path consisting of cut vertices and $Y$ has a vertex of low degree.  This result can be understood as an extension of Proposition~\ref{prop5}.

\begin{theorem}\label{thm:cut-path}
Let $X$ and $Y$ be graphs on $n$ vertices.  Suppose $x_1\cdots x_d$ ($d \geq 1$) is a path in $X$, where $x_1$ and $x_d$ are cut vertices and each of $x_2, \ldots, x_{d-1}$ has degree exactly $2$.  If the minimum degree of $Y$ is at most $d$, then $\FS(X,Y)$ is disconnected.
\end{theorem}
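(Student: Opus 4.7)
The plan is to prove disconnectedness by constructing an invariant of $(X,Y)$-friendly swaps that takes at least two distinct values on $V(\FS(X,Y))$. First I would fix a vertex $y \in V(Y)$ with $\deg_Y(y) \leq d$, write $N(y)$ for its neighbors in $Y$, and $S = V(Y) \setminus (\{y\} \cup N(y))$ for its strangers. The cut-vertex hypotheses on $x_1$ and $x_d$ give nonempty disjoint sets $A \subseteq V(X)\setminus\{x_1\}$ (a connected component of $X\vert_{V(X)\setminus\{x_1\}}$ disjoint from $\{x_2,\ldots,x_d\}$) and $D \subseteq V(X)\setminus\{x_d\}$ (a connected component of $X\vert_{V(X)\setminus\{x_d\}}$ disjoint from $\{x_1,\ldots,x_{d-1}\}$); both are disjoint from the bottleneck path, and $A\cap D=\emptyset$ because any path in $X$ from $A$ to $D$ must traverse all of $x_1\cdots x_d$. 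In particular $n\geq d+2$, so $|S|\geq n-1-d\geq 1$.

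The key observations that should drive the invariant are: (i) $y$ never swaps with any $s\in S$, since they are non-adjacent in $Y$; and (ii) whenever $y$ lies at an interior path vertex $x_i$ ($2\leq i\leq d-1$), the only vertices of $X$ adjacent to $y$'s position are $x_{i-1}$ and $x_{i+1}$, so $y$ can move only by swapping with a friend standing at one of those two positions. Thus, a stranger can never ``pass through'' $y$ in the bottleneck, and each crossing of $y$ through the bottleneck must expend a friend at each step.

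Motivated by this, I would try a bookkeeping scheme in the spirit of the incidence-matrix argument from Proposition~\ref{prop5}: track the set $T(\sigma) = \sigma^{-1}(\{y\}\cup N(y))\subseteq V(X)$ (which has cardinality at most $d+1$) and in particular its intersections with $A$, $\{x_1,\ldots,x_d\}$, and $D$. A naive count such as $|T(\sigma)\cap D|\pmod 2$ is constant under every swap except friend--stranger swaps across edges $\{x_d,e\}$ with $e\in D$ (and the symmetric case at $x_1$), so the actual invariant will combine such a count with additional structure—likely a sign, a record of which part of $V(X)$ contains $y$, or a generalized incidence matrix indexed by a refinement of the partition $V(X) = A \sqcup \{x_1\} \sqcup \cdots \sqcup \{x_d\} \sqcup D \sqcup R$ on one side and $\{y\} \sqcup N(y) \sqcup S$ on the other. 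I would verify invariance by enumerating swap types (swaps internal to $A$, $D$, or the ``remainder'' $R$; swaps across $\{x_1,a\}$ or $\{x_d,e\}$; and swaps along $\{x_i,x_{i+1}\}$) and conclude by producing two configurations with distinct invariant values, e.g.\ configurations that differ by placing a chosen stranger $s\in S$ on the opposite side of $y$ relative to the bottleneck.

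The main obstacle is pinning down the exact form of this invariant. Identifying a scheme robust enough to survive arbitrary path swaps (which can permute labels freely on $\{x_1,\ldots,x_d\}$ subject only to friendship) while still detecting that certain configurations are mutually unreachable is delicate: too coarse an invariant fails to be preserved, while too fine an invariant fails to be non-constant. The degree bound $\deg_Y(y)\leq d$ should be precisely sharp for forcing the deficit to appear in an appropriate modular quantity, with the path's length $d$ providing exactly the ``room'' needed to transport friends through the bottleneck while the invariant detects the impossibility of an extra crossing.
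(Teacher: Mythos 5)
Your overall strategy---find a quantity preserved by every $(X,Y)$-friendly swap and show it separates two configurations---is indeed the shape of the paper's argument, and your two key observations (a vertex $y$ of degree at most $d$ never swaps with its strangers, and inside the bottleneck it can only advance by spending a friend) are exactly the right intuition. But there is a genuine gap: the invariant is never constructed, and you say so yourself (``the main obstacle is pinning down the exact form of this invariant''). Moreover, the candidate forms you float---a parity such as $|T(\sigma)\cap D|\bmod 2$, a sign correction, or an incidence matrix in the spirit of Proposition~\ref{prop5}---are exact conserved quantities, and no such quantity is available here: whenever $y$ is standing away from the path $x_1\cdots x_d$, its friends (and the strangers) can shuttle back and forth across the bottleneck arbitrarily, so no count of $\sigma^{-1}(N(y))$ on either side of the path, taken modulo anything and even combined with $\sgn$, is preserved in general. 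What actually survives all swaps is not an equality but a position-dependent \emph{inequality} attached to $y$ itself.

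Concretely, the paper assumes $\deg_Y(y_0)=d$ exactly (adding edges to $Y$ is harmless by Proposition~\ref{Prop1}), fixes a neighbor $x_0$ of $x_1$ lying in a component $R$ of $X\vert_{V(X)\setminus\{x_1\}}$ not containing $x_2$, fixes a neighbor $x_{d+1}$ of $x_d$ on the far side, starts from $\sigma$ with $\sigma(x_0)=y_0$, and proves by induction on swaps that every $\tau$ in the component of $\sigma$ satisfies: either $\tau^{-1}(y_0)\in R$, or $\tau^{-1}(y_0)=x_i$ for some $1\leq i\leq d$ and at least $i$ of the $d$ neighbors of $y_0$ lie in $\tau(R\cup\{x_1,\ldots,x_{i-1}\})$. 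Each forward step of $y_0$ deposits one friend behind it; a friend behind $y_0$ can only re-emerge by swapping with $y_0$ itself, which moves $y_0$ back one step and keeps the inequality; and when $y_0$ sits at $x_d$ with all $d$ friends behind it, its only possible swap partner sits at $x_{d-1}$, since $x_{d-1}$ is the unique vertex of $R\cup\{x_1,\ldots,x_{d-1}\}$ adjacent to $x_d$. Hence $y_0$ never reaches $x_{d+1}$, and $\FS(X,Y)$ is disconnected. This disjunctive, monotone bookkeeping (rather than any modular or matrix-valued conserved quantity) is precisely the delicate piece you flagged as missing, and it is the mathematical content of the theorem; without it, your outline is a plan rather than a proof.
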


\begin{proof}
Proposition~\ref{Prop1} tells us that adding edges to $Y$ cannot make $\FS(X,Y)$ become disconnected, so we may assume that the minimum degree of $Y$ is exactly $d$; let $y_0 \in V(Y)$ be a vertex of degree $d$, and denote its neighbors by $y_1, \ldots, y_d$.  We now identify special vertices $x_0, x_{d+1} \in V(X)$ as follows.  First, suppose $d=1$.  Since $x_1$ is a cut vertex of $X$, it has neighbors in multiple connected components of $X\vert_{V(X)\setminus\{x_1\}}$; let $x_0$ and $x_2$ be neighbors of $x_1$ in different connected components of $X\vert_{V(X)\setminus\{x_1\}}$.  Second, suppose $d>1$.  Since $x_1$ is a cut vertex of $X$, it has a neighbor in a connected component of $X\vert_{V(X)\setminus\{x_1\}}$ that does not contain $x_2$; fix $x_0$ to be any such neighbor of $x_1$.  Similarly, since $x_d$ is a cut vertex of $X$, it has a neighbor in a connected component of $X\vert_{V(X)\setminus\{x_d\}}$ that does not contain $x_{d-1}$; fix $x_{d+1}$ to be any such neighbor of $x_d$.  Note that any path from $x_0$ to $x_{d+1}$ must traverse $x_1\cdots x_d$.  Note also that each of the vertices $x_1,\ldots,x_d$ is a cut vertex of $X$.  Finally, let $R$ denote the vertex set of the connected component of $X\vert_{V(X)\setminus\{x_1\}}$ that contains $x_0$. See Figure~\ref{Fig6}. 

Fix some bijection $\sigma \in \FS(X,Y)$ such that $\sigma(x_0)=y_0$.  Let $B$ be the connected component of $\FS(X,Y)$ that contains $\sigma$.  We claim that every bijection $\tau$ in $B$ satisfies $\tau(x_{d+1}) \neq y_0$; this will imply that $\FS(X,Y)$ has multiple connected components.  We will in fact establish the following stronger statement: If $\tau$ is in $B$, then either
\begin{enumerate}[(i)]
    \item $\tau^{-1}(y_0) \in R$; or
    \item $\tau^{-1}(y_0)=x_i$ for some $1 \leq i\leq d$, and at least $i$ of the vertices $y_1, \ldots, y_d$ are contained in $\tau(R \cup \{x_1, \ldots, x_{i-1}\})$.
\end{enumerate}
Note that condition (i) holds for $\tau=\sigma$.  Thus, it suffices to show that if this statement holds for some $\tau$ in $B$, then it also holds for any $\tau'$ that is obtained from $\tau$ by an $(X,Y)$-friendly swap.  Suppose $\tau'$ is obtained from $\tau$ by an $(X,Y)$-friendly swap across the edge $\{u,v\} \in E(X)$ (so that $\{\tau(u), \tau(v)\}\in E(Y)$).  We consider a number of case distinctions.
\begin{enumerate}[(a)]
    \item Suppose $\tau$ satisfies condition (i) and neither $u$ nor $v$ equals $\tau^{-1}(y_0)$.  Then $(\tau')^{-1}(y_0)=\tau^{-1}(y_0) \in R$, so $\tau'$ satisfies condition (i).
    \item Suppose $\tau$ satisfies condition (i) and $\tau^{-1}(y_0)\in\{u,v\}$. Without loss of generality, assume $\tau^{-1}(y_0)=u$.  Note that $\tau(v)=y_j$ for some $1 \leq j \leq d$ and that $(\tau')^{-1}(y_0)=v$.  If $v \in R$, then $\tau'$ satisfies condition (i).  Otherwise, $v=x_1$, and $\tau'$ satisfies condition (ii) because $y_j \in \tau'(R)$.
    \item Suppose $\tau$ satisfies condition (ii) for some $i$ and neither $u$ nor $v$ equals $\tau^{-1}(y_0)$.  Then $u$ and $v$ are contained in the same connected component of $X\vert_{V(X)\setminus\{x_i\}}$.  In particular, we see that
    $$\tau'(R \cup \{x_1, \ldots, x_{i-1}\})=\tau(R \cup \{x_1, \ldots, x_{i-1}\}),$$
    which shows that $\tau'$ satisfies condition (ii).
    \item Suppose $\tau$ satisfies condition (ii) for some $2 \leq i \leq d-1$ and $\tau^{-1}(y_0)\in\{u,v\}$. Without loss of generality, assume $u=\tau^{-1}(y_0)=x_i$.  Since the only neighbors of $x_i$ are $x_{i-1}$ and $x_{i+1}$, we conclude that $v$ is one of these two vertices.  Note (as before) that $\tau(v)=y_j$ for some $1 \leq j \leq d$ and that $(\tau')^{-1}(y_0)=v$.  If $v=x_{i-1}$, then $(\tau')^{-1}(y_0)=x_{i-1}$ and
    $$\tau'(R \cup \{x_1, \ldots, x_{i-2}\})=\tau(R \cup \{x_1, \ldots, x_{i-1}\}) \setminus \{y_j\}.$$
    The assumption on $\tau$ implies that this set has size at least $i-1$, so $\tau'$ satisfies condition (ii).  If instead $v=x_{i+1}$, then
    $(\tau')^{-1}(y_0)=x_{i+1}$ and
    $$\tau'(R \cup \{x_1, \ldots, x_i\})=\tau(R \cup \{x_1, \ldots, x_{i-1}\}) \cup \{y_j\}.$$
    The assumption on $\tau$ implies that this set has size at least $i+1$, so $\tau'$ satisfies condition (ii).
    \item Suppose that $d=1$, that $\tau$ satisfies condition (ii) (necessarily for $i=1$), and that $\tau^{-1}(y_0) \in \{u,v\}$.  Without loss of generality, assume $u=\tau^{-1}(y_0)=x_1$.  Note that $v=\tau^{-1}(y_1)$, where $\tau^{-1}(y_1) \in R$ by assumption.  Then $(\tau')^{-1}(y_0)=v \in R$, so $\tau'$ satisfies condition (i).
    \item Suppose that $d>1$, that $\tau$ satisfies condition (ii) for $i=1$, and that $\tau^{-1}(y_0) \in \{u,v\}$.  Without loss of generality, assume $u=\tau^{-1}(y_0)=x_1$.  Note that we have $\tau(v)=y_j$ for some $1 \leq j \leq d$.  Recall that $x_1$ is adjacent to only $x_2$ and some vertices in $R$, so these are the only possible vertices that $v$ can be.  If $v=x_2$, then $(\tau')^{-1}(y_0)=x_2$ and
    $$\tau'(R \cup \{x_1\})=\tau(R) \cup \{y_j\}.$$
    The assumption on $\tau$ implies that this set has size at least $2$, so $\tau'$ satisfies condition (ii).  If $v \in R$, then $(\tau')^{-1}(y_0)=v \in R$, which shows that $\tau'$ satisfies condition (i).
    \item  Suppose that $d>1$, that $\tau$ satisfies condition (ii) for $i=d$, and that $\tau^{-1}(y_0)\in\{u,v\}$. Without loss of generality, assume $u=\tau^{-1}(y_0)=x_d$.  Note that we have $v=\tau^{-1}(y_j)$ for some $1 \leq j \leq d$.  All vertices $y_1, \ldots, y_d$ are contained in $\tau(R \cup \{x_1, \ldots, x_{d-1}\})$ by assumption, so we see that $v \in R \cup \{x_1, \ldots, x_{d-1}\}$.  Since $v$ is adjacent to $u$, we conclude that $v=x_{d-1}$.  Then $(\tau')^{-1}(y_0)=x_{d-1}$ and
    $$\tau'(R \cup \{x_1, \ldots, x_{d-2}\})=\tau(R \cup \{x_1, \ldots, x_{d-1}\}) \setminus \{y_j\}.$$
    The assumption on $\tau$ implies that this set has size at least $d-2$, so $\tau'$ satisfies condition (ii).
\end{enumerate}
This analysis exhausts the possible cases and completes the proof.
\end{proof}

\begin{figure}[ht]
\begin{center}
\includegraphics[height=5cm]{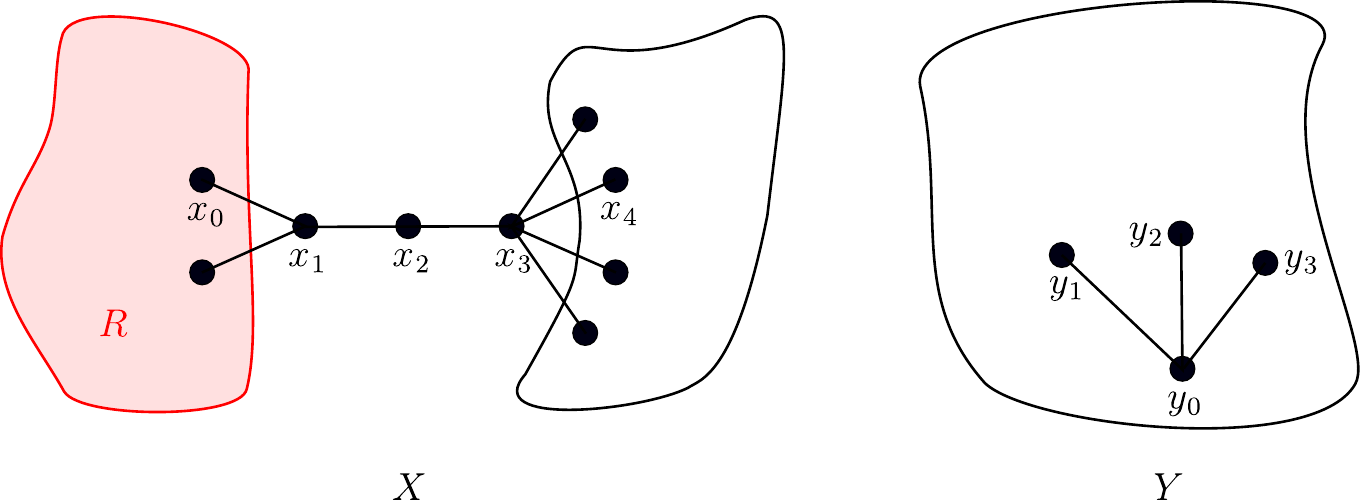}
\caption{A schematic illustration of the proof of Theorem~\ref{thm:cut-path} (with $d=3$).}
\label{Fig6}
\end{center}  
\end{figure}

We now derive some consequences of Theorem~\ref{thm:cut-path} regarding specific families of graphs. The \dfn{lollipop graph} $\mathsf{Lollipop}_{k,m}$ is the graph with vertex set $[k+m]$ and edge set \[E(\mathsf{Lollipop}_{k,m})=\{\{i,i+1\}:1\leq i\leq k\}\cup\{\{i,j\}:k+1\leq i<j\leq k+m\}.\] In other words, $\mathsf{Lollipop}_{k,m}$ is obtained by identifying the vertex $k+1$ in the path graph $\mathsf{Path}_{k+1}$ with the vertex $k+1$ in the complete graph on the vertex set $\{k+1,\ldots,k+m\}$. The following corollary is immediate from Theorem~\ref{thm:cut-path}. 

\begin{corollary}\label{Thm6}
Let $X$ be a subgraph of $\mathsf{Lollipop}_{n-m,m}$. If $Y$ is a graph on $n$ vertices such that $\FS(X,Y)$ is connected, then the minimum degree of $Y$ is at least $n-m+1$.
\end{corollary}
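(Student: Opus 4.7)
The plan is to derive the corollary as an immediate consequence of Theorem~\ref{thm:cut-path} by identifying an appropriate long path of cut vertices inside the lollipop graph. The first step will be a reduction to the case $X = \mathsf{Lollipop}_{n-m,m}$: since $X$ is isomorphic to a subgraph of $\mathsf{Lollipop}_{n-m,m}$, Proposition~\ref{Prop1} shows that $\FS(X,Y)$ is isomorphic to a spanning subgraph of $\FS(\mathsf{Lollipop}_{n-m,m},Y)$ (both have vertex set $\mathfrak{S}_n$), so connectivity of $\FS(X,Y)$ forces connectivity of $\FS(\mathsf{Lollipop}_{n-m,m},Y)$. Thus we may assume $X = \mathsf{Lollipop}_{n-m,m}$ throughout.

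The next step is to locate within $\mathsf{Lollipop}_{n-m,m}$ the path $x_1 x_2 \cdots x_d$ with $d = n-m$ defined by $x_i = i+1$; that is, the path $2, 3, \ldots, n-m+1$. I will verify that this path satisfies the hypotheses of Theorem~\ref{thm:cut-path}: the endpoint $x_1 = 2$ is a cut vertex because deleting it isolates $\{1\}$ from the rest of the graph; the endpoint $x_d = n-m+1$ is a cut vertex (provided $m \geq 2$) because deleting it separates the remaining handle vertices $\{1,2,\ldots,n-m\}$ from the clique vertices $\{n-m+2,\ldots,n\}$; and each internal vertex $x_i$ for $2 \leq i \leq d-1$ has degree exactly $2$, being an interior vertex of the handle.

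Applying the contrapositive of Theorem~\ref{thm:cut-path} with this path will then yield the conclusion directly: if the minimum degree of $Y$ were at most $d = n-m$, then $\FS(\mathsf{Lollipop}_{n-m,m},Y)$ would be disconnected, contradicting our assumption. Hence the minimum degree of $Y$ is at least $n-m+1$. There is no real obstacle here---the corollary is pure bookkeeping: identify the correct path in the lollipop and invoke Theorem~\ref{thm:cut-path}. The only delicate points are the reduction via Proposition~\ref{Prop1} and the need for $m \geq 2$ to ensure that $n-m+1$ is a cut vertex (the degenerate case $m=1$, where the lollipop is a path, is already covered by Corollary~\ref{Cor1}).
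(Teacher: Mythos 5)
Your proof is correct and follows essentially the same route as the paper, which simply declares the corollary immediate from Theorem~\ref{thm:cut-path}: you supply exactly the intended bookkeeping, namely the reduction to $X=\mathsf{Lollipop}_{n-m,m}$ via Proposition~\ref{Prop1} and the identification of the path $2,3,\ldots,n-m+1$ whose endpoints are cut vertices and whose internal vertices have degree $2$. Your side remark about $m\ge 2$ reflects an implicit assumption already present in the paper's statement, so it does not affect the argument.
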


The graph $\mathsf{Lollipop}_{n-3,3}$ is a prolongation of $K_3$. Therefore, Corollary~\ref{cor:triangle-orb} guarantees that $\FS(\mathsf{Lollipop}_{n-3,3},Y)$ is connected whenever $Y$ is a graph on $n$ vertices with minimum degree at least $n-2$. Consequently, we can appeal to Corollary~\ref{Thm6} to obtain a complete classification of the graphs $Y$ such that $\FS(\mathsf{Lollipop}_{n-3,3})$ is connected. 

\begin{corollary}\label{Cor4}
Let $Y$ be a graph on $n$ vertices. The graph $\FS(\mathsf{Lollipop}_{n-3,3},Y)$ is connected if and only if the minimum degree of $Y$ is at least $n-2$. 
\end{corollary}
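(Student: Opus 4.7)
The plan is to deduce the corollary by combining two results already established in the paper, with no further work needed beyond checking that the lollipop fits the hypotheses.

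For the "only if" direction, I would simply invoke Corollary~\ref{Thm6} with $m=3$: $\mathsf{Lollipop}_{n-3,3}$ is (trivially) a subgraph of itself, so if $\FS(\mathsf{Lollipop}_{n-3,3},Y)$ is connected, the minimum degree of $Y$ must be at least $n-m+1 = n-2$.

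For the "if" direction, I would appeal to Corollary~\ref{cor:triangle-orb}, which says that for every prolongation $\widetilde{X}$ of $K_3$ and every graph $\widetilde{Y}$ on $|V(\widetilde{X})|$ vertices with minimum degree at least $|V(\widetilde{X})|-2$, the graph $\FS(\widetilde{X},\widetilde{Y})$ is connected. Thus the only thing to verify is that $\mathsf{Lollipop}_{n-3,3}$ is a prolongation of $K_3$. To see this, let $X^\#$ be the triangle induced on the vertex set $\{n-2,n-1,n\}$, which is a subgraph of $\mathsf{Lollipop}_{n-3,3}$ isomorphic to $K_3$. The sequence $1,2,\ldots,n-2,n-1,n$ is a Hamiltonian path of $\mathsf{Lollipop}_{n-3,3}$ (using the path edges $\{i,i+1\}$ for $1\leq i\leq n-3$ together with the triangle edges $\{n-2,n-1\}$ and $\{n-1,n\}$), and its final two edges form a Hamiltonian path of $X^\#$. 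Both conditions in the definition of a prolongation are satisfied.

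There is essentially no obstacle here: the corollary is a clean bookkeeping assembly of Corollary~\ref{Thm6} (which supplies the necessary condition) and Corollary~\ref{cor:triangle-orb} (which supplies the sufficient condition). The only content is the observation about the prolongation structure of the lollipop, which is immediate from the definitions.
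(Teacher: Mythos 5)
Your proposal is correct and follows exactly the paper's own route: the paper also deduces the corollary by combining Corollary~\ref{Thm6} (with $m=3$) for necessity with Corollary~\ref{cor:triangle-orb} for sufficiency, after noting that $\mathsf{Lollipop}_{n-3,3}$ is a prolongation of $K_3$. Your explicit verification of the prolongation structure (the Hamiltonian path $1,2,\ldots,n$ containing the path $n-2,n-1,n$ of the triangle) is the same observation the paper leaves implicit.
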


\begin{remark}\label{Rem:Lollipop}
The graph $\mathsf{Lollipop}_{n-5,5}$ is a prolongation of the graph $X$ in Corollary~\ref{cor:weird-1}, so it follows from that corollary that $\FS(\mathsf{Lollipop}_{n-5,5},Y)$ is connected whenever $Y$ is a graph on $n$ vertices with minimum degree at least $n-3$. On the other hand, Corollary~\ref{Thm6} tells us that $\FS(\mathsf{Lollipop}_{n-5,5},Y)$ is disconnected whenever $Y$ is a graph on $n$ vertices with minimum degree at most $n-5$. We leave open the characterization of graphs $Y$ such that $\FS(\mathsf{Lollipop}_{n-5,5},Y)$ is connected; in order to finish the characterization, it suffices to consider only the graphs $Y$ with minimum degree $n-4$. 
\end{remark}

We end this section with a discussion of one additional nice family of graphs. For $n\geq 3$, let $\mathsf{D}_n$ be the graph with vertex set $[n]$ and edge set $\{\{i,i+1\}:1\leq i\leq n-2\}\cup\{\{n-2,n\}\}$. The notation $\mathsf D_n$ is chosen because this graph is the Dynkin diagram of type $D_n$. We will show that for $n\geq 4$, the characterization of the graphs $Y$ such that $\FS(\mathsf{D}_n,Y)$ is connected is precisely the same as the characterization of the graphs $Y$ such that $\FS(\mathsf{Lollipop}_{n-3,3},Y)$ is connected. Hence, the edge $\{n-1,n\}$ in $\mathsf{Lollipop}_{n-3,3}$ has no influence on whether or not $\FS(\mathsf{Lollipop}_{n-3,3},Y)$ is connected. 

\begin{theorem}\label{Thm7}
Let $Y$ be a graph on $n\geq 4$ vertices. The graph $\FS(\mathsf{D}_n,Y)$ is connected if and only if the minimum degree of $Y$ is at least $n-2$. 
\end{theorem}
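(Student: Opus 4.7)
The plan is to prove each direction of the biconditional separately.

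For the necessary direction, I would apply Theorem~\ref{thm:cut-path} to the subpath $2, 3, \ldots, n-2$ of $\mathsf{D}_n$: its endpoints $2, n-2$ are cut vertices of $\mathsf{D}_n$, and each interior vertex $3, \ldots, n-3$ has degree exactly $2$ in $\mathsf{D}_n$. With $d = n-3$, Theorem~\ref{thm:cut-path} forces the minimum degree of $Y$ to be at least $n-2$ whenever $\FS(\mathsf{D}_n, Y)$ is connected. (In the edge cases $n = 4, 5$ the subpath degenerates to a single vertex or a single edge, but the same argument yields $d = 1$ or $d = 2$ respectively.)

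For the sufficient direction, the key observation is that $\mathsf{D}_n$ differs from $\mathsf{Lollipop}_{n-3, 3}$ only by the single edge $\{n-1, n\}$, and $\FS(\mathsf{Lollipop}_{n-3, 3}, Y)$ is connected when $\delta(Y) \geq n-2$ by Corollary~\ref{Cor4}. Hence it suffices to establish the following \emph{simulation claim}: for every $\sigma \in \mathfrak{S}_n$ with $\{\sigma(n-1), \sigma(n)\} \in E(Y)$, the permutations $\sigma$ and $\sigma \circ (n-1\;n)$ lie in the same connected component of $\FS(\mathsf{D}_n, Y)$. Given the simulation claim, any path in $\FS(\mathsf{Lollipop}_{n-3, 3}, Y)$ lifts to a path in $\FS(\mathsf{D}_n, Y)$ by replacing each swap across $\{n-1, n\}$ by its simulation.

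To prove the simulation claim, set $a = \sigma(n-1)$, $b = \sigma(n)$, $c = \sigma(n-2)$. Since $\overline{Y}$ has maximum degree at most $1$, the vertex $c$ has at most one non-neighbor in $Y$, so at most one of $\{a, c\}, \{b, c\}$ fails to be an edge. When both are edges, the three-swap sequence $(c, a, b) \to (a, c, b) \to (b, c, a) \to (c, b, a)$, confined to the ``T-shape'' $\{n-2, n-1, n\}$, directly transposes $a$ and $b$. Otherwise, by symmetry $\{a, c\} \notin E(Y)$, so $a$ and $c$ are mutual non-neighbors; I would then recruit the helper label $d := \sigma(n-3)$ and, using the edges $\{c, d\}, \{a, d\} \in E(Y)$ (which hold since $d \notin \{a, c\}$), swap $c$ with $d$ across $\{n-3, n-2\}$ and run the T-trick with $d$ in place of $c$ before restoring $c$. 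This works provided $\{b, d\} \in E(Y)$; otherwise $d$ must be $b$'s unique non-neighbor, and I would further recruit $e := \sigma(n-4)$ --- necessarily outside $\{a, b, c, d\}$, hence not the non-neighbor of any of $a, b, c, d$ --- and push $e$ into position $n-2$ via two slides, run the T-trick with $e$, and reverse the slides to restore both $d$ and $c$. The main obstacle is the bookkeeping in this last sub-case: one has to verify across roughly seven $(\mathsf{D}_n, Y)$-friendly swaps that each invoked pair of labels is adjacent in $Y$. The matching structure of $\overline{Y}$ and the availability of the path prefix $\mathsf{Path}_{n-3} \subset \mathsf{D}_n$ (for $n \geq 5$) provide enough slack, but the explicit sequence must be exhibited and checked carefully.
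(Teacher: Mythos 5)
Your necessity argument is correct and is in substance the paper's own (the paper routes it through Proposition~\ref{Prop1} and Corollary~\ref{Cor4}, which ultimately rest on Theorem~\ref{thm:cut-path} applied to the same path of cut vertices $2,\ldots,n-2$). Your sufficiency argument, however, takes a genuinely different route. The paper argues directly: since $\overline Y$ is a matching, it uses Lemma~\ref{Lem1} (same last entry plus same acyclic orientation implies same component) to reverse the $\overline Y$-edges one at a time via the edge $\{n-2,n\}$, steering every permutation toward the identity. You instead piggyback on Corollary~\ref{Cor4} and reduce everything to simulating a single $(\mathsf{Lollipop}_{n-3,3},Y)$-swap across $\{n-1,n\}$ inside $\FS(\mathsf D_n,Y)$. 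For $n\geq 5$ your simulation does check out: in the worst sub-case the helper $e=\sigma(n-4)$ is $Y$-adjacent to all of $a,b,c,d$ (its unique non-neighbor, if any, lies outside that set), and the seven-swap sequence (slide $e$ to position $n-2$ using $\{e,d\},\{e,c\}$, run the T-trick using $\{e,a\},\{a,b\},\{b,e\}$, slide back) never invokes a non-edge. This modular argument is arguably cleaner than the paper's, since it isolates exactly what the extra lollipop edge contributes.

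The genuine gap is $n=4$, which the stated theorem includes. Your final sub-case ($\{a,c\},\{b,d\}\notin E(Y)$) needs the position $n-4$, which does not exist when $n=4$, and this is not a repairable bookkeeping issue: when $n=4$ and $\overline Y$ is the perfect matching pairing $\sigma(3)$ with $\sigma(2)$ and $\sigma(4)$ with $\sigma(1)$ — that is, $Y\cong\mathsf{Cycle}_4$ — the simulation claim is false, and in fact the theorem itself fails there. Indeed $\mathsf D_4\cong\mathsf{Star}_4$, and $\FS(\mathsf{Star}_4,\mathsf{Cycle}_4)$ has exactly $2$ connected components (see the discussion before Theorem~\ref{ThmWilson}, or Theorem~\ref{Thm5}: $\overline{\mathsf{Star}_4}=K_3\oplus K_1$ is not a forest), even though $\mathsf{Cycle}_4$ has minimum degree $2=n-2$. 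So no completion of your argument can cover $n=4$; what you have is a proof for $n\geq 5$. For what it is worth, the paper's own proof breaks at the same point: its final assertion that the swap across $\{n-2,n\}$ turns $\widehat\tau$ into another linear extension of $\beta$ fails for $n=4$, $Y=\mathsf{Cycle}_4$ (e.g.\ $\widehat\tau=2413$ becomes $2314$, reversing the $\overline Y$-edge $\{1,3\}$), and even for $n\geq 5$ one must additionally choose $\widehat\tau(n-1)$ to be a sink of $\beta$ not matched to $n$ or $\widehat\tau(n)$, which is possible precisely because $n\geq 5$ guarantees at least three sinks. You should therefore state and prove your result for $n\geq 5$ (or add the exception $Y\cong\mathsf{Cycle}_4$ when $n=4$), rather than claim the biconditional for all $n\geq 4$.
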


Before proving this result, we establish a simple lemma. Recall that $\alpha_{\overline Y}(\lambda)$ denotes the acyclic orientation of $\overline Y$ associated to the permutation $\lambda$, where an edge $\{y,z\}$ is oriented from $y$ to $z$ if and only if $\lambda^{-1}(y)<\lambda^{-1}(z)$.

\begin{lemma}\label{Lem1}
Let $Y$ be a graph with vertex set $[n]$. If $\lambda$ and $\lambda'$ are two permutations in $\mathfrak S_n$ such that $\lambda(n)=\lambda'(n)$ and $\alpha_{\overline Y}(\lambda)=\alpha_{\overline Y}(\lambda')$, then $\lambda$ and $\lambda'$ are in the same connected component of $\FS(\mathsf D_n,Y)$.
\end{lemma}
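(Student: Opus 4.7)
The plan is to reduce to the path case (Theorem~\ref{Thm1}) by observing that once the value at the vertex $n$ is fixed, all the relevant swaps happen within the induced subpath on $[n-1]$, and the edge $\{n-2,n\}$ is never needed.

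More precisely, let $y=\lambda(n)=\lambda'(n)$, and set $Y'=Y\vert_{[n]\setminus\{y\}}$. Observe that $\mathsf{D}_n\vert_{[n-1]}=\mathsf{Path}_{n-1}$, and that complement commutes with restriction, so $\overline{Y'}=\overline{Y}\vert_{[n]\setminus\{y\}}$. The restriction $\lambda\vert_{[n-1]}$ is a bijection $[n-1]\to[n]\setminus\{y\}$, and a direct unpacking of the definition of $\alpha_{(-)}(-)$ shows that $\alpha_{\overline{Y'}}(\lambda\vert_{[n-1]})$ coincides with the restriction of $\alpha_{\overline{Y}}(\lambda)$ to the edge set of $\overline{Y'}$; the analogous statement holds for $\lambda'$. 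Since $\alpha_{\overline Y}(\lambda)=\alpha_{\overline Y}(\lambda')$ by hypothesis, we conclude $\alpha_{\overline{Y'}}(\lambda\vert_{[n-1]})=\alpha_{\overline{Y'}}(\lambda'\vert_{[n-1]})$.

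Now apply Theorem~\ref{Thm1} to the graph $\FS(\mathsf{Path}_{n-1},Y')$: the restrictions $\lambda\vert_{[n-1]}$ and $\lambda'\vert_{[n-1]}$ are linear extensions of the same acyclic orientation of $\overline{Y'}$, so they lie in a common connected component. Hence there is a sequence of $(\mathsf{Path}_{n-1},Y')$-friendly swaps transforming $\lambda\vert_{[n-1]}$ into $\lambda'\vert_{[n-1]}$.

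Finally, each such swap uses an edge $\{i,i+1\}$ with $1\leq i\leq n-2$, which is an edge of $\mathsf{D}_n$, and the corresponding $Y'$-edge $\{\lambda(i),\lambda(i+1)\}$ is an edge of $Y$ (since $Y'$ is an induced subgraph of $Y$). Thus the same sequence, viewed as operating on bijections $[n]\to[n]$ that fix the value $y$ at $n$, is a sequence of $(\mathsf{D}_n,Y)$-friendly swaps from $\lambda$ to $\lambda'$. There is no real obstacle here; the only thing to be careful about is the bookkeeping in step three, verifying that the restriction of the acyclic orientation really is the acyclic orientation of the restriction.
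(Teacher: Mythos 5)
Your proposal is correct and follows essentially the same route as the paper's proof: restrict to $Y'=Y\vert_{[n]\setminus\{\lambda(n)\}}$, note that the restrictions $\lambda\vert_{[n-1]}$ and $\lambda'\vert_{[n-1]}$ induce the same acyclic orientation of $\overline{Y'}$, apply Theorem~\ref{Thm1} to $\FS(\mathsf{Path}_{n-1},Y')$, and lift the resulting friendly swaps to $(\mathsf{D}_n,Y)$-friendly swaps fixing the value at $n$. The only difference is that you spell out the bookkeeping (complement commuting with restriction, the orientation restricting correctly), which the paper leaves implicit.
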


\begin{proof}
Let $Y'=Y\vert_{[n]\setminus\{\lambda(n)\}}$. The restrictions $\lambda\vert_{[n-1]}$ and $\lambda'\vert_{[n-1]}$ are two vertices of $\FS(\mathsf{Path}_{n-1},Y')$ such that $\alpha_{\overline {Y'}}(\lambda\vert_{[n-1]})=\alpha_{\overline {Y'}}(\lambda'\vert_{[n-1]})$, so it follows immediately from Theorem~\ref{Thm1} that there is a sequence of $(\mathsf{Path}_{n-1},Y')$-friendly swaps transforming $\lambda\vert_{[n-1]}$ into $\lambda'\vert_{[n-1]}$. Because $\mathsf{Path}_{n-1}$ is a subgraph of $\mathsf D_n$ and $Y'$ is a subgraph of $Y$, this same sequence of friendly swaps can be viewed as a sequence of $(\mathsf D_n,Y)$-friendly swaps that transforms $\lambda$ into $\lambda'$.  
\end{proof}

\begin{proof}[Proof of Theorem~\ref{Thm7}]
We may assume $V(Y)=[n]$. The graph $\mathsf{D_n}$ is a subgraph of $\mathsf{Lollipop}_{n-3,3}$. Therefore, if the minimum degree of $Y$ is at most $n-3$, then it follows from Proposition~\ref{Prop1} and Corollary~\ref{Cor4} that $\FS(\mathsf{D}_n,Y)$ is disconnected. 

Now assume $Y$ has minimum degree at least $n-2$. This means that $\overline Y$ has maximum degree at most $1$. In other words, $\overline Y$ is a disjoint union of copies of $K_1$ and $K_2$. Choose a permutation $\sigma\in\mathfrak S_n$, and let $B$ be the connected component of $\FS(\mathsf D_n,Y)$ containing $\sigma$. We are going to show that the identity permutation in $\mathfrak S_n$ is also in $B$. Since $\sigma$ was arbitrary, this will imply that $\FS(\mathsf D_n,Y)$ is connected. Consider the acyclic orientation $\alpha_{\overline Y}(\sigma)$ of $\overline Y$ associated to $\sigma$. Suppose $\{u,v\}$ is an edge of $\overline Y$ with $\sigma^{-1}(u)<\sigma^{-1}(v)$. This means that in $\alpha_{\overline Y}(\sigma)$, the edge $\{u,v\}$ is oriented from $u$ to $v$. Since $\{u,v\}$ is an isolated edge in $\overline Y$, the vertex $v$ must be a sink of $\alpha_{\overline Y}(\sigma)$. We claim that there is a permutation $\widehat\sigma$ in $B$ such that $\alpha_{\overline Y}(\widehat\sigma)$ is the same as $\alpha_{\overline Y}(\sigma)$ except with the orientation of $\{u,v\}$ reversed. First, suppose $\sigma(n)=v$. Let $r$ be such that $\sigma(n-r)=u$. We can apply $(\mathsf D_n,Y)$-friendly swaps across $\{n-r,n-r+1\},\{n-r+1,n-r+2\},\ldots,\{n-2,n-1\}$ (in this order) and then another $(\mathsf{D}_n,Y)$-friendly swap across $\{n-2,n\}$ so as to obtain the permutation \[\widehat\sigma=\sigma\circ(n-r\,\,n-r+1)\circ(n-r+1\,\, n-r+2)\circ\cdots\circ(n-2\,\,n-1)\circ(n-2\,\, n).\] It is straightforward to check that $\widehat \sigma$ is in $B$ and that $\alpha_{\overline Y}(\widehat\sigma)$ is obtained from $\alpha_{\overline Y}(\sigma)$ by reversing the orientation of $\{u,v\}$. Next, assume that $\sigma(n)\neq v$. Since $u$ is a source of $\alpha_{\overline Y}(\sigma)$, we also have $\sigma(n)\neq u$. There exists a permutation $\sigma'\in\mathfrak S_n$ such that $\alpha_{\overline Y}(\sigma')=\alpha_{\overline Y}(\sigma)$, $\sigma'(n)=\sigma(n)$, $\sigma'(n-1)=v$, and $\sigma'(n-2)=u$. Applying Lemma~\ref{Lem1} with $\lambda=\sigma$ and $\lambda'=\sigma'$, we find that $\sigma'$ is in $B$. If we apply a $(\mathsf D_n,Y)$-friendly swap across $\{n-2,n\}$ to $\sigma'$, we obtain a permutation $\widehat\sigma$ in $B$ such that the acyclic orientations $\alpha_{\overline Y}(\sigma)$ and $\alpha_{\overline Y}(\widehat\sigma)$ agree except in the orientation of $\{u,v\}$, as desired. 

We have shown that we can use a sequence of $(\mathsf D_n,Y)$-friendly swaps to reverse the orientation of a single edge in the acyclic orientation of $\overline Y$ associated to a permutation. By repeating this argument, we can eventually construct a permutation $\tau$ in $B$ such that $\alpha_{\overline Y}(\tau)$ is any prescribed acyclic orientation of $\overline Y$. In particular, we can choose $\tau$ in $B$ so that each edge $\{a,b\}$ in $\overline Y$ with $a<b$ is oriented from $a$ to $b$ in $\alpha_{\overline Y}(\tau)$. In other words, $\alpha_{\overline Y}(\tau)=\beta$, where $\beta$ is the acyclic orientation of $\overline Y$ associated to the identity permutation in $\mathfrak S_n$.  

We are going to show that there is a permutation $\tau'$ in $B$ with $\alpha_{\overline Y}(\tau')=\beta$ and $\tau'(n)=n$. If we can do this, then it will follow from Lemma~\ref{Lem1} (with $\lambda=\tau'$ and $\lambda'$ the identity permutation) that the identity permutation is also in $B$, as claimed. If $\tau(n)=n$, then we can simply set $\tau'=\tau$. Thus, let us assume that $\tau(n)\neq n$. Using the fact that $n\geq 4$, it is not difficult to check that there exists a linear extension $\widehat\tau$ of $\beta$ such that $\widehat\tau(n)=\tau(n)$ and $\widehat\tau(n-2)=n$. Invoking Lemma~\ref{Lem1} with $\lambda=\tau$ and $\lambda'=\widehat\tau$, we see that $\widehat\tau$ is in $B$. Note that $n$ is a sink of $\beta$ by the definition of $\beta$. Since $\widehat\tau$ is a linear extension of $\beta$, the vertex $\widehat\tau(n)$ must also be a sink of $\beta$. The vertices $\widehat\tau(n)$ and $n$ are distinct sinks of $\beta$, so they are not adjacent in $\overline Y$. Therefore, we can perform a $(\mathsf D_n,Y)$-friendly swap across $\{n-2,n\}$ in order to transform $\widehat\tau$ into a different linear extension $\tau'$ of $\beta$ with $\tau'(n)=n$. This permutation $\tau'$ is in $B$, as desired.   
\end{proof}

\section{Concluding Remarks and Open Problems}\label{Sec:Conclusion}

We conclude with several open problems and suggestions for future inquiries along the lines of the present paper.

\subsection{Other graphs}

In Sections~\ref{Sec:Paths} and \ref{Sec:Cycles}, we gained a full understanding of the connected components of $\FS(\mathsf{Path}_n,Y)$ and $\FS(\mathsf{Cycle}_n,Y)$. It could be interesting consider complements of paths and cycles by investigating the connected components of $\FS(\overline{\mathsf{Path}_n},Y)$ and $\FS(\overline{\mathsf{Cycle}_n},Y)$. Another natural direction would be the exploration of graphs of the form $\FS(K_{k,n-k},Y)$, where $K_{k,n-k}$ is the complete bipartite graph with partite sets of sizes $k$ and $n-k$. Note that $K_{1,n-1}$ is isomorphic to $\mathsf{Star}_n$, which was studied thoroughly by Wilson \cite{wilson}. It might be interesting to consider just the specific complete bipartite graphs $K_{2,n-2}$, or possibly $K_{r,r}$. It would also be interesting to obtain more general results about the graphs $\FS(X,Y)$ when $X$ is a tree (or even a specific type of tree). Let us also recall Remark~\ref{Rem:Lollipop}, which asks for a characterization of the graphs $Y$ such that $\FS(\mathsf{Lollipop}_{n-5,5},Y)$ is connected. Of course, several other nice families of graphs that we have not mentioned could also give rise to interesting results.

\subsection{Making $\FS(X,Y)$ connected for all reasonable graphs $Y$}

One might naturally ask for the sparsest graph $Y$ on $n$ vertices such that $\FS(X,Y)$ is connected whenever $X$ is a connected graph on $n$ vertices. For $\FS(\mathsf{Path}_n,Y)$ to be connected, $Y$ must be a complete graph. A less trivial variant of this problem asks for the sparsest graph $Y$ on $n$ vertices such that $\FS(X,Y)$ is connected whenever $X$ is a biconnected graph on $n$ vertices. In this case, $\FS(\mathsf{Cycle}_n,Y)$ must be connected, so it follows from Corollary~\ref{Cor2} that $\overline Y$ must be a forest consisting of trees of coprime sizes. This implies that $Y$ must have at least $\binom{n}{2}-(n-2)$ edges. 

This bound of $\binom{n}{2}-(n-2)$ is tight. Indeed, suppose $Y$ is a graph such that $\overline Y$ is the disjoint union of a tree on $n-1$ vertices and an isolated vertex $v^*$. Then $Y$ contains a subgraph isomorphic to $\mathsf{Star}_n$ (with $v^*$ as the center of the star). Suppose $X$ is biconnected. We want to show that $\FS(X,Y)$ is connected. If $X$ is isomorphic to $\mathsf{Cycle}_n$, then the connectedness of $\FS(X,Y)$ follows from Corollary~\ref{Cor2}. If $n=7$ and $X$ is isomorphic to the graph $\theta_0$ from Theorem~\ref{ThmWilson}, then one can check by computer that $\FS(X,Y)$ is connected. Now assume $X$ is not isomorphic to $\theta_0$ or a cycle graph. If $X$ is not bipartite, then $\FS(X,Y)$ is connected because $\FS(X,\mathsf{Star}_n)$ is connected by Wilson's theorem ($\FS(X,\mathsf{Star}_n)$ and $\FS(\mathsf{Star}_n,X)$ are isomorphic). If $X$ is bipartite, then the connectedness of $\FS(X,Y)$ follows from Remark~\ref{Rem1} because $\mathsf{Star}_n$ is isomorphic to a proper subgraph of $Y$. 

We have shown that if $\FS(X,Y)$ is connected for all biconnected graphs $X$, then $\overline Y$ must a forest consisting of trees of coprime sizes. We have seen that some of these choices for $Y$ do indeed make $\FS(X,Y)$ connected for all biconnected $X$. The following conjecture states that \emph{all} such choices of $Y$ satisfy this property. 

\begin{conjecture}
Let $Y$ be a graph on $n$ vertices such that $\overline Y$ is a forest consisting of trees $\mathscr T_1,\ldots,\mathscr T_r$ such that $\gcd(|V(\mathscr T_1)|,\ldots,|V(\mathscr T_r)|)=1$. If $X$ is a biconnected graph on $n$ vertices, then $\FS(X,Y)$ is connected. 
\end{conjecture}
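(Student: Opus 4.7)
The plan is to split the argument into two cases based on whether $\overline{Y}$ has an isolated vertex.

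In the first case, $\overline{Y}$ has an isolated vertex $v^\ast$, so $Y$ contains $\mathsf{Star}_n$ as a spanning subgraph centered at $v^\ast$. By Proposition~\ref{Prop1}, $\FS(X, Y)$ has at most as many connected components as $\FS(X, \mathsf{Star}_n) \cong \FS(\mathsf{Star}_n, X)$, and Wilson's Theorem~\ref{ThmWilson} applied to the biconnected $X$ shows that $\FS(\mathsf{Star}_n, X)$ is connected unless $X$ is bipartite or $X \in \{\mathsf{Cycle}_n, \theta_0\}$. The cycle case follows from Corollary~\ref{Cor2}; the $\theta_0$ case ($n = 7$) admits a finite computation; and if $X$ is bipartite and biconnected, the coprimeness hypothesis forces $r \geq 2$, which implies (for $n \geq 4$) that $Y$ strictly contains $\mathsf{Star}_n$, whence the argument of Remark~\ref{Rem1} gives connectedness.

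In the second case, every tree of $\overline{Y}$ has size at least $2$, and coprimeness forces at least one tree $\mathscr T_i$ of size $\geq 3$. When $X$ is Hamiltonian, the combination of Corollary~\ref{Cor2} and Proposition~\ref{Prop1} already yields connectedness, so the genuine difficulty is when $X$ is biconnected but not Hamiltonian (e.g.\ $X \cong K_{2,n-2}$ for $n \geq 5$). I propose an induction in which one deletes a vertex $v \in V(X)$ and a corresponding vertex $u \in V(Y)$ to reduce to a smaller instance. A good choice of $u$ always exists: take a non-leaf vertex of $\mathscr T_i$ adjacent to a leaf of $\mathscr T_i$ (the second vertex of any longest path in $\mathscr T_i$). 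Removing such a $u$ produces at least one isolated vertex in $\overline{Y-u}$, so the gcd of the new component sizes is automatically $1$. The induction hypothesis then gives connectedness of $\FS(X-v, Y-u)$, which one hopes to lift to connectedness of $\FS(X, Y)$ via a \emph{routing lemma}: every $\sigma \in \mathfrak{S}_n$ is connected in $\FS(X, Y)$ to some permutation sending $v$ to $u$. Once the routing lemma is available, one transforms both $\sigma$ and $\tau$ into permutations placing $u$ at $v$ and finishes by applying the induction to their restrictions.

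The principal obstacle is twofold. First, not every biconnected non-cycle graph $X$ admits a vertex $v$ with $X - v$ biconnected: for instance, the graph obtained from $\mathsf{Cycle}_6$ by adding the chord $\{1, 4\}$ is biconnected, yet removing any single vertex destroys biconnectedness. In such cases one would need to remove an entire \emph{ear} of an ear-decomposition of $X$ (and correspondingly many vertices of $Y$) to descend to a smaller biconnected graph, complicating the induction. Second, even when a single-vertex reduction is available, the routing lemma itself is nontrivial: naively moving the label $u$ along a path in $X$ toward $v$ may be blocked by an edge of $\overline Y$ meeting $u$, and detouring around such obstructions relies both on the biconnectedness of $X$ (providing internally disjoint paths) and on the density of $Y$ (to keep most pairs swappable). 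Formalizing these maneuvers uniformly appears to require substantial new machinery, likely extending the toric-acyclic-orientation and double-flip framework of Section~\ref{Sec:Cycles} from cycles to arbitrary biconnected graphs; this generalization is the heart of the conjecture.
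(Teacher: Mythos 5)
The statement you are trying to prove is stated in the paper as an open conjecture; the paper itself only establishes the special case in which $\overline Y$ is the disjoint union of a tree on $n-1$ vertices and an isolated vertex (via Wilson's Theorem~\ref{ThmWilson}, Remark~\ref{Rem1}, Corollary~\ref{Cor2}, and a computer check for $\theta_0$). Your Case~1 is essentially that same argument, mildly generalized to any forest $\overline Y$ containing an isolated vertex, and it is sound: Proposition~\ref{Prop1} reduces to $\FS(X,\mathsf{Star}_n)\cong\FS(\mathsf{Star}_n,X)$, Wilson handles non-bipartite biconnected $X$, Corollary~\ref{Cor2} handles $X\cong\mathsf{Cycle}_n$, and Remark~\ref{Rem1} (with the roles of the two graphs swapped) handles bipartite biconnected $X$ once you note that for $n\geq 4$ the forest $\overline Y$ misses some edge off $v^*$, so $Y$ properly contains $\mathsf{Star}_n$. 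The only loose end there is the $\theta_0$ case, where ``admits a finite computation'' is asserted but not carried out (and would now have to range over all forests on $7$ vertices with an isolated vertex, not just the single graph checked in the paper).

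The genuine gap is Case~2, which is precisely the open content of the conjecture, and your proposal does not close it. The overall induction scheme (route both permutations to ones sending $v\mapsto u$, then apply the inductive hypothesis to $\FS(X\vert_{V(X)\setminus\{v\}},Y\vert_{V(Y)\setminus\{u\}})$ and lift the path, exactly as in the proofs of Theorems~\ref{thm:hereditary} and~\ref{thm:hereditary-1}) is structurally reasonable, and your choice of $u$ as a non-leaf neighbor of a leaf does guarantee that $\overline{Y}\vert_{V(Y)\setminus\{u\}}$ is a forest with an isolated vertex, hence with coprime tree sizes. But the two ingredients that would make this work are both absent: first, as your own example ($\mathsf{Cycle}_6$ plus the chord $\{1,4\}$) shows, a biconnected $X$ need not have any vertex whose deletion leaves a biconnected graph, so the inductive hypothesis may simply be unavailable, and the proposed fix of deleting a whole ear changes $|V(X)|$ by more than one and has no matching reduction on the $Y$ side that preserves the coprime-forest hypothesis; second, the routing lemma is stated but not proved, and for sparse biconnected non-Hamiltonian $X$ (e.g.\ $K_{2,n-2}$) moving the label $u$ to the vertex $v$ through the bottleneck vertices is exactly the kind of obstruction the conjecture is about, so the lemma is not plausibly easier than the conjecture itself. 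As written, the proposal proves only what already follows from the paper's results (Case~1, $X$ Hamiltonian, $X$ a cycle) and leaves the remaining case as an explicitly acknowledged plan rather than a proof.
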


\subsection{Diameter and girth}

In a different direction, one might ask about the diameter and girth of the graph $\FS(X,Y)$ or its connected components.  Recall that if $X$ and $Y$ are graphs on $n$ vertices, then $\FS(X,Y)$ has $n!$ vertices.  Must the diameter of a connected component of $\FS(X,Y)$ be polynomially bounded in $n$?  What changes if we restrict our attention to $X$ and $Y$ such that $\FS(X,Y)$ is connected? For the girth of $\FS(X,Y)$, we can say somewhat more.  Since $\FS(X,Y)$ is bipartite (by Proposition~\ref{Prop2}), its girth is either even (and greater than or equal to $4$) or infinite.  It is easy to see that if $X$ and $Y$ each have $2$ pairwise disjoint edges, then the girth of $\FS(X,Y)$ is exactly $4$.  The behavior in the remaining case (which can be reduced to the setting where $X$ is a star and $Y$ is connected) remains unknown.

It would also be interesting to understand the diameters of connected components of some specific graphs that we have studied already, such as $\FS(\mathsf{Path}_n,Y)$ and $\FS(\mathsf{Cycle}_n,Y)$. For example, Proposition~\ref{PropToggles} states that for every $n$-element poset $P$, the group of bijections generated by the toggle operators $t_1,\ldots,t_{n-1}$ acts transitively on the set $\mathcal L(P)$ of linear extensions of $P$. It is natural to ask for the maximum distance between two linear extensions of $P$, where the distance is measured by the number of toggle operators needed to change the first linear extension into the second. To state this problem in the language we have been using throughout this article, let $Z$ be the Hasse diagram of $P$ (viewed as a graph), and let $Y=\overline Z$. Consider the acyclic orientation $\alpha$ of $\overline Y=Z$ given by orienting each edge $\{x,y\}$ from $x$ to $y$ whenever $y$ covers $x$ in $P$. Then we are asking for the diameter of the connected component $H_\alpha$ of $\FS(\mathsf{Path}_n,Y)$ as defined in Theorem~\ref{Thm1}. 

\subsection{New equivalence relations for acyclic orientations}

Equivalences of acyclic orientations under various flips played a major role in our analysis in Section~\ref{Sec:Cycles}.  Fix nonnegative integers $a$ and $b$ and a graph $G$, and consider the set $\Acyc(G)$ of acyclic orientations of $G$.  Suppose that in an acyclic orientation $\alpha \in \Acyc(G)$, there are pairwise non-adjacent vertices $u_1, \ldots, u_a$ and $v_1, \ldots, v_b$ such that every $u_i$ is a source and every $v_i$ is a sink.  Then we can obtain a new acyclic orientation by reversing the directions of all edges incident to any of these vertices; equivalently, we simultaneously flip every $v_i$ into a sink and every $u_i$ into a source.  Call such an operation an \dfn{$(a,b)$-flip}.  We say that acyclic orientations $\alpha, \alpha'$ are \dfn{$\{a,b\}$-flip equivalent} if $\alpha'$ can be obtained from $\alpha$ by a sequence of $(a,b)$-flips and $(b,a)$-flips.  (The reader can easily verify that this is in fact an equivalence relation.)  In this language, the toric acyclic orientations discussed thoroughly in Section~\ref{Sec:Cycles} (which have received significant attention in previous articles such as \cite{Develin}) are simply $\{0,1\}$-flip equivalence classes. Furthermore, the double-flip equivalence classes introduced in Section~\ref{Sec:Cycles}, which are used to parameterize the connected components of graphs of the form $\FS(\mathsf{Cycle}_n,Y)$, are simply $\{1,1\}$-flip equivalence class. We also saw that there is a nontrivial connection between $\{0,1\}$-flip equivalence classes and $\{1,1\}$-flip equivalence classes: if $G$ has connected components of sizes $n_1,\ldots,n_r$ and $\nu=\gcd(n_1,\ldots,n_r)$, then every $\{0,1\}$-flip equivalence class is the union of exactly $\nu$ $\{1,1\}$-flip equivalence classes. Furthermore, we saw that for each fixed $\{0,1\}$-flip equivalence class $[\alpha]_\sim$ of $\Acyc(G)$, the $\nu$ $\{1,1\}$-flip equivalence classes contained in $[\alpha]_\sim$ all have the same number of linear extensions. 

What can be said about $\{a,b\}$-flip equivalence classes for other choices of $\{a,b\}$?  Are there other interesting connections between $\{a,b\}$-flip equivalence classes and $\{a',b'\}$-flip equivalence classes for different choices of $a,a',b,b'$? 

\subsection{Probabilistic and extremal questions}
It is natural to ask about the connectedness of the friends-and-strangers graph $\FS(X,Y)$ that results from choosing $X$ and $Y$ to be independent Erd\H{o}s-R\'enyi random graphs in $\mathcal G(n,p)$. In a separate article with Noga Alon \cite{Typical}, we prove that the threshold probability $p$ where $\FS(X,Y)$ changes from being disconnected with high probability to being connected with high probability is $p=n^{-1/2+o(1)}$. 

In the same article, we obtain estimates for the minimum $d$ such that whenever $X$ and $Y$ are $n$-vertex graphs with minimum degrees at least $d$, the friends-and-strangers graph $\FS(X,Y)$ is connected. 

\subsection{Right versus left multiplication}

One of the referees suggested looking at analogues of friends-and-strangers graphs for other Coxeter groups. The construction discussed in this section provides one such analogue that works for any group. 

Suppose $\Gamma$ is a group and $S\subseteq\Gamma$ is closed under taking inverses, meaning $\tau^{-1}\in S$ for every $\tau\in S$. The \dfn{right Cayley graph}\footnote{This deviates slightly from standard terminology since we do not require $S$ to generate $\Gamma$.} $\mathsf{Cay}_{\mathsf R}(\Gamma,S)$ is the graph with vertex set $\Gamma$ and edge set \[E(\mathsf{Cay}_{\mathsf R}(\Gamma,S))=\{\{\sigma,\sigma\tau\}:\sigma\in\Gamma,\tau\in S\}.\] In other words, edges in $\mathsf{Cay}_{\mathsf R}(\Gamma,S)$ correspond to right multiplication by elements of $S$. 
Similarly, the \dfn{left Cayley graph} $\mathsf{Cay}_{\mathsf L}(\Gamma,S)$ is the graph with vertex set $\mathfrak S_n$ and edge set \[E(\mathsf{Cay}_{\mathsf L}(\Gamma,S))=\{\{\sigma,\tau\sigma\}:\sigma\in\Gamma,\tau\in S\}.\] 

For $i,j\in[n]$, let us identify the pair $\{i,j\}$ with the transposition $(i\,j)$ in $\mathfrak S_n$ that swaps $i$ and $j$. If $X$ and $Y$ are graphs with $V(X)=V(Y)=[n]$, then the edge set of the friends-and-strangers graph of $X$ and $Y$ is \[E(\FS(X,Y))=E(\mathsf{Cay}_{\mathsf R}(\mathfrak S_n,E(X)))\cap E(\mathsf{Cay}_{\mathsf L}(\mathfrak S_n,E(Y))).\] In particular, we have  $\FS(X,K_n)=\mathsf{Cay}_{\mathsf R}(\mathfrak S_n,E(X))$ and $\FS(K_n,Y)=\mathsf{Cay}_{\mathsf L}(\mathfrak S_n,E(Y))$. This suggests a natural generalization of friends-and-strangers graphs. Namely, if $S_{\mathsf R}$ and $S_{\mathsf L}$ are subsets of a group $\Gamma$ that are each closed under taking inverses, then we can consider the graph with vertex set $\Gamma$ and edge set \[E(\mathsf{Cay}_{\mathsf R}(\Gamma,S_{\mathsf R}))\cap E(\mathsf{Cay}_{\mathsf L}(\Gamma,S_{\mathsf L})).\] 

The preceding definition seems far too broad for one to say anything meaningful in full generality; however, it is possible that there could be interesting directions to explore for specific groups (for instance, following the referee's suggestion, hyperoctahedral groups). Even when $\Gamma=\mathfrak S_n$, the above definition still vastly generalizes friends-and-strangers graphs because we do not require the elements of $S_{\mathsf R}$ and $S_{\mathsf L}$ to be transpositions. It could be fruitful to investigate whether these graphs have interesting structural properties when $\Gamma=\mathfrak S_n$ and the sets $S_{\mathsf R}$ and $S_{\mathsf L}$ are chosen very specifically.

\section*{Acknowledgments}
We thank Christian Gaetz for pointing us to the article \cite{stanley} and Alexander Postnikov for showing us that the connected components of $\FS(\mathsf{Path}_n,Y)$ correspond to the acyclic orientations of $\overline Y$. We thank Noga Alon and Matthew Macauley for engaging in very helpful conversations. We also thank the anonymous referees for their thorough reading and detailed comments. We are especially grateful to the referee who outlined their own very detailed alternative view on friends-and-strangers graphs. The first author was supported by a Fannie and John Hertz Foundation Fellowship and an NSF Graduate Research Fellowship (grant number DGE-1656466).

\end{document}